\numberwithin{equation}{section}
\theoremstyle{plain}
\newtheorem{Theorem}[equation]{Theorem}
\newtheorem{Proposition}[equation]{Proposition}
\newtheorem{Corollary}[equation]{Corollary}
\newtheorem{Lemma}[equation]{Lemma}
\theoremstyle{definition}
\newtheorem{Remark}[equation]{Remark}
\theoremstyle{definition}
\def\Xint#1{\mathchoice
{\XXint\displaystyle\textstyle{#1}}%
{\XXint\textstyle\scriptstyle{#1}}%
{\XXint\scriptstyle\scriptscriptstyle{#1}}%
{\XXint\scriptscriptstyle\scriptscriptstyle{#1}}%
\!\int}
\def\XXint#1#2#3{{\setbox0=\hbox{$#1{#2#3}{\int}$}
\vcenter{\hbox{$#2#3$}}\kern-.5\wd0}}
\def\dashint{\Xint-}
\def\d{d}
\newcommand{\BMO}{\mathcal{BMO}}
\newcommand{\re}{\mathbb{R}}
\DeclareMathOperator*{\essinf}{ess\,inf}
\DeclareMathOperator*{\esssup}{ess\,sup}
\begin{document}

\title[Extrapolation for multilinear Muckenhoupt classes]{Extrapolation for multilinear Muckenhoupt classes and applications to the bilinear Hilbert transform}

\author{Kangwei Li} 
\address{Kangwei Li, BCAM, Basque Center for Applied Mathematics, Mazarredo 14, 48009
	Bilbao, Basque Country, Spain}
\email{kli@bcamath.org}

\author{Jos\'e Mar{\'\i}a Martell}
\address{Jos\'e Mar{\'\i}a Martell
	\\
	Instituto de Ciencias Matemáticas CSIC-UAM-UC3M-UCM
	\\
	Consejo Superior de Investigaciones Científicas
	\\
	C/ Nicol\'as Cabrera, 13-15
	\\
	E-28049 Madrid, Spain} 

\email{chema.martell@icmat.es}

\author{Sheldy Ombrosi}
\address{Sheldy Ombrosi, Department of Mathematics, Universidad Nacional del Sur,
Bah\'ia Blanca, Argentina}
\email{sombrosi@uns.edu.ar}
\address{and}
\address{
BCAM, Basque Center for Applied Mathematics, Mazarredo 14, 48009
	Bilbao, Basque Country, Spain}
\email{sombrosi@bcamath.org}

\thanks{The first author is supported by Juan de la Cierva - Formaci\'on 2015 FJCI-2015-24547, by the Basque Government through the BERC
2018-2021 program and by Spanish Ministry of Economy and Competitiveness
MINECO through BCAM Severo Ochoa excellence accreditation SEV-2013-0323
and through project MTM2017-82160-C2-1-P funded by (AEI/FEDER, UE) and
acronym ``HAQMEC''.
	The second author acknowledges financial
	support from the Spanish Ministry of Economy and Competitiveness,
	through the ``Severo Ochoa'' Programme for Centres of Excellence in
	R\&D” (SEV-2015-0554). He also acknowledges that the research
	leading to these results has received funding from the European
	Research Council under the European Union's Seventh Framework
	Programme (FP7/2007-2013)/ ERC agreement no. 615112 HAPDEGMT. 
	The third author is supported by CONICET PIP 11220130100329CO, Argentina, by the Basque Government through the BERC
2018-2021 program and by Spanish Ministry of Economy and Competitiveness
MINECO through BCAM Severo Ochoa excellence accreditation SEV-2013-0323. 
The authors express their gratitude to C.~Benea, C.~Muscalu, and  R.~Torres for their comments. 
	Finally the first and second authors would  like to thank the last author for his hospitality during their visit to Bahía Blanca where much of
	the work on the project was done.}

\subjclass[2010]{42B25, 42B30, 42B35}

\keywords{Multilinear Muckenhoupt weights, Rubio de Francia extrapolation, multilinear Calder\'on-Zygmund operators, bilinear Hilbert transform, vector-valued inequalities, sparse domination, BMO, commutators}

\date{\today}


\begin{abstract}
In this paper we solve a long standing problem about the multivariable Rubio de Francia extrapolation theorem for the multilinear Muckenhoupt classes $A_{\vec{p}}$, which were extensively studied by Lerner \textit{et al.} and which are the natural ones for the class of multilinear Calder\'on-Zygmund operators. Furthermore, we go beyond the classes $A_{\vec{p}}$ and extrapolate within the classes $A_{\vec{p},\vec{r}}$ which appear naturally associated to the weighted norm inequalities for multilinear sparse forms which control fundamental operators such as the bilinear Hilbert transform. We give several applications which can be easily obtained using extrapolation. First, for the bilinear Hilbert transform one can extrapolate from the recent result of Culiuc \textit{et al.} who considered the Banach range and extend the estimates to the quasi-Banach range. As a direct consequence, we obtain weighted vector-valued inequalities reproving some of the results by Benea and Muscalu.  We also extend recent results of Carando \textit{et al.}~on Marcinkiewicz-Zygmund estimates for multilinear Calder\'on-Zygmund operators. Finally, our last application gives new weighted estimates for the commutators of multilinear Calder\'on-Zygmund operators  and for the bilinear Hilbert transform with BMO functions using ideas from Bényi \textit{et al.}
	
\end{abstract}

\maketitle

\section{Introduction}
The Rubio de Francia extrapolation theorem \cite{RdF} is a powerful tool in harmonic analysis which states that if a given operator $T$ is bounded on $L^{p_0}(w)$ for some fixed $p_0$, $1\le p_0<\infty$, and for all $w\in A_{p_0}$, then $T$ is indeed bounded on all $L^p(w)$ for all $1<p<\infty$ and for all $w\in A_p$. This is quite practical as it suffices to choose some particular exponent which could be natural for the operator in question and establish the weighted estimates for it. This, for instance, allows one to immediately obtain vector valued weighted estimates showing that $T$ is bounded on $L^p_{\ell^s}(w)$ for all $1<p,s<\infty$ and all $w\in A_p$. On the other hand, the classical Rubio de Francia extrapolation theorem is only suitable for operators for which we know or expect to have estimates in the range $(1,\infty)$. There are other versions considering estimates within the smaller classes of weights of the form $A_{\frac{p}{p_-}}\cap RH_{\big(\frac{p_+}{p}\big)'}$ which are naturally adapted to range $p\in (p_-, p_+)$ (see \cite{AM}) or off-diagonal extrapolation results for the classes $A_{p,q}$ which are natural for estimates from $L^p$ to $L^q$ where $p\neq q$ (see \cite{HMS}). We refer the reader to \cite{CMP}  for the development of extrapolation and a more complete list of references (see also \cite{Duo}). 

In the multivariable setting there are some Rubio de Francia extrapolation results. In \cite{grafakos-martell04} it was shown that if $T$ is bounded from $L^{p_1}(w_1)\times \dots \times L^{p_m}(w_m)$ to $L^p(w_1^{\frac{p}{p_1}} \dots w_m^{\frac{p}{p_m}})$ for some fixed exponents $1<p_1,\dots, p_m<\infty$, $\frac1p=\frac1{p_1}+\dots+\frac1{p_m}$, and for all  $w_i\in A_{p_i}$, then the same holds for all possible values of $p_j$. Much as before, this extrapolation result for products of Muckenhoupt classes is adapted to the ranges $p_j\in (1,\infty)$ and the recent paper \cite{CM-extrapol} extended extrapolation to the classes of  weights $w_j\in A_{\frac{p_j}{p_j^-}}\cap RH_{\big(\frac{p_j^+}{p}\big)'}$ which are associated with the ranges $p_j\in(p_j^-,p_j^+)$. These results are very natural extensions of the Rubio de Francia extrapolation theorem, but they treat each variable separately with its own Muckenhoupt class of weights (this fact also appears in the proofs, see \cite{Duo} or \cite{CM-extrapol}) and do not quite use the multivariable nature of the problem. In this direction \cite{LOPTT} introduced some multilinear Muckenhoupt classes. Namely, given 
$\vec{p}=(p_1,\dots,p_m)$ with $1\le p_1,\dots,p_m<\infty$, one says that $\vec{w}=(w_1,\dots, w_m)\in A_{\vec{p}}$ provided $0<w_i<\infty$ a.e. for every $i=1,\dots,m$ and
\[
[\vec{w}]_{A_{\vec p}}
=\sup_Q\Big(\dashint_Q w\d x\Big)^{\frac 1{p}}
\prod_{i=1}^m \Big(\dashint_Q w_i^{1-p_i'}\d x\Big)^{\frac1{p_i'}}<\infty,
\]
where $\frac1p=\frac1{p_1}+\dots+\frac1{p_m}$ and $w=w_1^{\frac{p}{p_1}} \dots w_m^{\frac{p}{p_m}}$ (here, when $p_i=1$, the term
corresponding to $w_i$ needs to be replaced by $\esssup_Q w_i^{-1}$). These classes of weights contain some multivariable structure in their definition and, as a matter of fact, characterize the boundedness from $L^{p_1}(w_1)\times \dots \times L^{p_m}(w_m)$ into $L^p(w)$ (where one has to replace $L^p(w)$ with $L^{p,\infty}(w)$ when at least one $p_i=1$) of the multi-sublinear  Hardy-Littlewood maximal function
\[
\mathcal{M}(f_1,\dots,f_m)(x)=\sup_{Q\ni x} \prod_{i=1}^m\Big(\dashint_Q |f_i(y)|dy\Big).
\]
Notice that $\mathcal{M}(f_1,\dots,f_m)(x)\lesssim M f_1(x)\dots M f_m(x)$, where $M$ is the regular Har\-dy-Little\-wood maximal function, and hence $A_{p_1}\times\dots\times A_{p_m}\subset A_{\vec{p}}$, however the inclusion is strict. This indicates that the multivariable operators are generally speaking richer than the simple multiplication of operators in each component. 

One of the main goals of this paper is to establish a Rubio de Francia extrapolation theorem valid in the context of the multilinear classes $A_{\vec{p}}$. Before stating the precise result we find illustrative to present some easy examples of weights which shed light on the structure of the multilinear classes and explain why such extrapolation result has been open for more than ten years.  For the sake of simplicity, let us consider the bilinear case with $\vec{p}=(1,1)$ so that $p=\frac12$. Having $\vec{w}=(w_1,w_2)\in A_{(1,1)}$ can be translated into $w_1^{\frac12}, w_2^{\frac12},w_1^{\frac12}w_2^{\frac12}\in A_1$ (see \cite{LOPTT} or Lemma \ref{lemma:main:II}). Hence, $\vec{w}=(w_1,w_2)\in A_{(1,1)}$, in contrast with $\vec{w}\in A_1\times A_1$, imposes less on each weight individually (since $w_i\in A_1$ easily implies $w_i^{\frac12}\in A_1$). But it incorporates a link between $w_1$ and $w_2$, which are no longer independent, since the product weight needs to satisfy $w_1^{\frac12}w_2^{\frac12}\in A_1$. For instance, we can take $w_1(x)=|x|^{-n}$ and $w_2(x)\equiv 1$ so that $\vec{w}=(w_1, w_2)\in A_{(1,1)}$, while $w_1\notin A_1$ since $w_1$ is not even locally integrable. On the other hand, once we pick $w_1(x)=|x|^{-n}$, there is a restriction on the possible weights $w_2$ for which $\vec{w}=(w_1, w_2)\in A_{(1,1)}$ since we would need to have that $|x|^{-\frac{n}2}w_2^{\frac12}\in A_1$ and this does not allow to take, for example, $w_2(x)=|x|^{-n}$. With these examples we can see already some of the difficulties that one encounters when trying to work with the multilinear classes of weights: first, one needs to work with component weights that are linked one another and, second, each individual weight might be non locally integrable but collectively the product should behave well. This might explain why any attempt to obtain a Rubio de Francia extrapolation theorem has been unsuccessful in the last years: the proofs for product weights in \cite{grafakos-martell04}, \cite{Duo}, \cite{CM-extrapol} treat each component independently and the conditions on the weights make them locally integrable, thus they do not naturally extend to the multilinear classes.  

In this paper we overcome these difficulties and obtain a multivariable Rubio de Francia extrapolation theorem which is not only valid for the classes $A_{\vec{p}}$ but also goes beyond and allows us to work with the classes $A_{\vec{p}, \vec{r}}$. The former classes are the natural ones for $\mathcal{M}$ and for multilinear Calder\'on-Zygmund operators, but the latter classes are related operators with restricted ranges of boundedness.  Indeed, these classes appeared in \cite{CPO2016} where weighted norm inequalities were obtained for the bilinear Hilbert transform in the case when the target space is Banach. One of the consequences of our main result is that extrapolation automatically extends these estimates to the case where the target spaces  are quasi-Banach.   Here we should emphasize that 
for the bilinear Hilbert transform (and some other classes of operators) the very robust helicoidal method developed in \cite{BM1, BM2, BM3, BM4} gives also estimates in the Banach and quasi-Banach case. Our result goes further and drops the dependence on the structural properties of the operator, once some weighted estimates are known for fixed exponents. This is indeed relevant as we can easily obtain vector-valued inequalities (some of them were proved in \cite{BM1, BM2, BM3, BM4} and \cite{CPO2016} for some particular classes of operators including the bilinear Hilbert transform).  To illustrate the applicability of our method we also prove weighted estimates for more singular operators  such as the commutators of multilinear operators with BMO functions. Very recently in \cite{BMMST} weighted norm inequalities were proved by using the  so-called Cauchy integral trick for operators satisfying weighted estimates associated within the class $A_{\vec{p}}$.  However, as this trick uses Minkowski's inequality, 
the estimates obtained in \cite{BMMST} were only valid in the Banach range.  As a result of our extrapolation result we can extend them to the quasi-Banach range.  As we are also able to work with the classes $A_{\vec{p}, \vec{r}}$, we can apply these ideas to prove new weighted estimates and vector-valued inequalities for the commutator of the  bilinear Hilbert transform with BMO functions. 

\medskip

In order to state our main result we need some notation. We  shall work on $\re^n$, $n\ge 1$, and by a
cube $Q$ in $\re^n$ we shall understand a cube with sides parallel to the coordinate axes.  Given a cube $Q$ and $f\in L^1_{\rm loc}(\re^n)$ we use the notation
\[
\dashint_Q f dx=\frac1{|Q|}\int_Q f dx.
\]
Hereafter, $m\ge 2$.  Given  $\vec p=(p_1,\dots, p_m)$ with $1\le
p_1,\dots, p_m<\infty$ and $\vec{r}=(r_1,\dots,r_{m+1})$ with $1\le r_1,\dots,r_{m+1}<\infty$,
we say that $\vec{r}\preceq \vec{p}$ whenever
\[
r_i\le p_i,
\quad
i=1,\dots, m;
\quad\mbox{and}
\quad r_{m+1}'>p,
\quad
\mbox{where}
\quad
\frac1p:=\frac1{p_1}+\dots+\frac1{p_{m}}
.
\]
Analogously, we say that
$\vec{r}\prec \vec{p}$ if $\vec{r}\preceq \vec{p}$ and moreover $r_i<p_i$ for every $i=1,\dots, m$. 
Notice that the fact that $\vec{r}\preceq\vec{p}$ forces that $\sum_{i=1}^{m+1}\frac1{r_i}>1$ and also $\frac1 p\le \sum_{i=1}^m \frac1{r_i}$. Hence, if $\sum_{i=1}^m \frac1{r_i}>1$ then we allow $p$ to be smaller than one. 

Under these  assumptions we can now introduce the classes of multilinear Muckenhoupt weights that we consider in the present paper, in Section \ref{section:multi-sparse} below we introduce some model operators whose weighted norm inequalities are governed by these classes. We say that   $\vec{w}=(w_1,\dots, w_m)\in A_{\vec p, \vec r}$, 
provided $0<w_i<\infty$ a.e. for every $i=1,\dots,m$ and 
\[
[\vec{w}]_{A_{\vec p, \vec r}}
=\sup_Q\Big(\dashint_Q w^{\frac {r_{m+1}'}{r_{m+1}'- p}}\d x\Big)^{\frac 1{p}-\frac
	1{r_{m+1}'}}
\prod_{i=1}^m \Big(\dashint_Q w_i^{\frac{r_i}{r_i-p_i}}\d x\Big)^{\frac 1{r_i}-\frac
	1{p_i}}<\infty,
\]
where $w=\prod_{i=1}^m w_i^{\frac{p}{p_i}} $.  When $r_{m+1}=1$ the term corresponding to $w$ needs to be replaced by $\Big(\dashint_Q w\d x\Big)^{\frac
	1{p}}$. Analogously, when $p_i=r_i$, the term
corresponding to $w_i$ needs to be replaced by $\esssup_Q w_i^{-\frac 1{p_i}}$. We note that $A_{\vec p, (1,\dots,1)}$ agrees with $A_{\vec p}$ introduced above.

We shall use the abstract
formalism of extrapolation families.    Hereafter $\mathcal{F}$ will
denote a family of $(m+1)$-tuples $(f,f_1,\ldots,f_m)$ of non-negative
measurable functions.   This approach to extrapolation has the
advantage that, for instance, vector-valued inequalities are an
immediate consequence of our extrapolation results.  We will discuss
applying this formalism to prove norm inequalities for specific operators
below.  For complete discussion of this
approach to extrapolation in the linear setting, see~\cite{CMP}.

Our main result is the following:

\begin{Theorem}\label{theor:extrapol-general}
	Let $ \mathcal F$ be a collection of $(m+1)$-tuples of non-negative functions. 	Assume that we have a vector $\vec{r}=(r_1,\dots,r_{m+1})$, with $1\le r_1,\dots,r_{m+1}<\infty$, and
	exponents $\vec p=(p_1,\dots, p_m)$, with $1\le
	p_1,\dots, p_m<\infty$ and $\vec{r}\preceq\vec{p}$,  such that given any $\vec w=(w_1,\dots, w_m) \in A_{\vec p,\vec{r}}$ the inequality
	\begin{equation}\label{extrapol:H}
	\|f\|_{L^{p}(w)} \le C([\vec w]_{A_{\vec p, \vec r}}) \prod_{i=1}^m\|f_i\|_{L^{p_i}(w_i)}
	\end{equation}
	holds for every $(f,f_1,\dots, f_m)\in \mathcal F$, where $\frac1p:=\frac1{p_1}+\dots+\frac1{p_m}$ and $w:=\prod_{i=1}^m w_i^{\frac{p}{p_i}}$.
	Then for all exponents $\vec q=(q_1,\dots,q_m)$, with $\vec{r}\prec\vec{q}$, and for all weights $\vec v=(v_1,\dots, v_m) \in A_{\vec q,\vec{r}}$ the inequality
	\begin{equation}\label{extrapol:C}
	\|f\|_{L^{q}(v)} \le C([\vec v]_{A_{\vec q, \vec r}})\prod_{i=1}^m \|f_i\|_{L^{q_i}(v_i)}
	\end{equation}
	holds for every $(f,f_1,\dots,f_m)\in \mathcal F$, where $\frac1q:=\frac1{q_1}+\dots+\frac1{q_m}$ and $v:=\prod_{i=1}^m v_i^{\frac{q}{q_i}}$.
	
	Moreover, for the same family of exponents and
	weights, and for all exponents $\vec{s}=(s_1,\dots, s_m)$ with $\vec{r}\prec\vec{s}$ 
		\begin{equation}\label{extrapol:vv}
	\bigg\|\Big(\sum_j (f^j)^s\Big)^\frac1s\bigg\|_{L^{q}(v)}
	\le
	C([\vec v]_{A_{\vec q, \vec r}})
	\prod_{i=1}^m\bigg\|\Big(\sum_j (f_i^j)^{s_i}\Big)^\frac1{s_i}\bigg\|_{L^{q_i}(v_i)}
	\end{equation}
	for all $\{(f^j, f_1^j, \dots, f_m^j)\}_j\subset\mathcal{F}$ and where $\frac1s:=\frac1{s_1}+\dots+\frac1{s_m}$.
\end{Theorem}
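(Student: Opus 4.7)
The theorem bundles a scalar extrapolation from exponents $\vec{p}$ to $\vec{q}$ with a vector-valued upgrade. My plan is to prove the scalar part \eqref{extrapol:C} by a multilinear Rubio de Francia iteration and then deduce \eqref{extrapol:vv} via the abstract-family trick of \cite{CMP}.

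The starting point would be a characterization of $A_{\vec{p},\vec{r}}$ in terms of classical one-weight Muckenhoupt/reverse-Hölder conditions on the product weight $w$ and on suitable powers of the individual $w_i$, with all auxiliary exponents explicit in terms of $\vec{p}$ and $\vec{r}$ (presumably this is Lemma \ref{lemma:main:II}). Such a lemma is the analytic dictionary that converts the multilinear condition into a finite collection of linear ones and dictates which weighted maximal operators should drive the iteration. To prove \eqref{extrapol:C}, I would fix $\vec{v}\in A_{\vec{q},\vec{r}}$ with $\vec{r}\prec\vec{q}$ and dualize: since $q$ may lie below $1$, I would first raise $f$ to an appropriate power so that the $L^q(v)$ norm becomes a genuine Banach-space norm. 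The natural exponent for this is $r_{m+1}'/(r_{m+1}'-p)$, precisely the one appearing in the definition of the class. Then, applying Rubio de Francia iteration of suitable weighted Hardy-Littlewood maximal operators to the dualizing functions, I would build a tuple $\vec{u}\in A_{\vec{p},\vec{r}}$ satisfying
\[
\|f\|_{L^{q}(v)}\lesssim \|f\|_{L^{p}(u)},\qquad \|f_i\|_{L^{p_i}(u_i)}\lesssim \|f_i\|_{L^{q_i}(v_i)}.
\]
The strict inequalities $\vec{r}\prec\vec{q}$ ensure that every auxiliary exponent is strictly greater than $1$, so the iterations converge to $A_1$ weights with quantitative control. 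Sandwiching the hypothesis \eqref{extrapol:H} applied to $\vec{u}$ between the two displays yields \eqref{extrapol:C}.

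For \eqref{extrapol:vv}, I would invoke the just-established scalar extrapolation twice. First, for any $\vec{w}\in A_{\vec{s},\vec{r}}$ (valid since $\vec{r}\prec\vec{s}$) and any sequence $\{(f^j,f_1^j,\dots,f_m^j)\}\subset\mathcal{F}$, apply \eqref{extrapol:C} at exponents $\vec{s}$ for each $j$, use the identity $\|(\sum_j(g^j)^s)^{1/s}\|_{L^{s}(w)}^{s}=\sum_j\|g^j\|_{L^{s}(w)}^{s}$, and then apply Hölder in $j$ with exponents $s_i/s$ (which sum to $1$ since $1/s=\sum_i 1/s_i$). This shows that the augmented family
\[
\widetilde{\mathcal{F}}=\Big\{\Big(\Big(\sum_j (f^j)^s\Big)^{1/s},\Big(\sum_j(f_1^j)^{s_1}\Big)^{1/s_1},\dots,\Big(\sum_j(f_m^j)^{s_m}\Big)^{1/s_m}\Big)\Big\}
\]
already satisfies the scalar hypothesis at exponents $\vec{s}$ with weights in $A_{\vec{s},\vec{r}}$. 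A second application of the scalar extrapolation, now to $\widetilde{\mathcal{F}}$ and from $\vec{s}$ to $\vec{q}$ (using $\vec{r}\prec\vec{q}$), delivers \eqref{extrapol:vv}.

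The main obstacle will be the construction of $\vec{u}$ in the scalar step. Because the product weight $u=\prod_i u_i^{p/p_i}$ is determined by the individual factors, the Rubio de Francia iterates used to build the $u_i$ cannot be chosen independently: they must interlock so that $u$ itself sits in the correct one-weight class forced by the characterization lemma. Managing this coupling—turning the algebra of the exponents $p_i,r_i,r_{m+1}'$ into a compatible choice of weighted maximal operators and dualizing functions—is precisely what has blocked multilinear extrapolation in the classes $A_{\vec{p}}$ for more than a decade, and it is the heart of the argument.
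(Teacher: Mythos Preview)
Your treatment of the vector-valued conclusion \eqref{extrapol:vv} is correct and coincides with the paper's Step~3: apply the scalar extrapolation at $\vec{s}$, interchange the $\ell^s$ and $L^s$ norms, use H\"older in $j$, and then extrapolate the new family $\widetilde{\mathcal{F}}$ from $\vec{s}$ to $\vec{q}$.

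The scalar part, however, has a genuine gap. You propose to build $\vec{u}\in A_{\vec{p},\vec{r}}$ directly from $\vec{v}$ by Rubio de Francia iteration of weighted maximal operators, and you correctly identify the obstruction: the iterates producing the $u_i$ must interlock so that the product $u=\prod_i u_i^{p/p_i}$ lands in the right one-weight class. But you do not indicate how to achieve this coupling; you end by saying it ``is the heart of the argument'' and leave it unresolved. This is not a technicality---it is exactly the difficulty that kept the problem open, and the direct iteration you sketch does not, as far as anyone knows, close.

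The paper sidesteps the coupling problem entirely and does \emph{not} construct $\vec{u}$ by simultaneous Rubio de Francia iteration. Its key structural input is Lemma~\ref{lemma:main} (not Lemma~\ref{lemma:main:II}, which is a simpler variant used only for commutators). That lemma shows that, freezing $w_1,\dots,w_{m-1}$, the condition $\vec{w}\in A_{\vec{p},\vec{r}}$ is equivalent to: (a) an auxiliary weight $\widehat{w}$ built from $w_1,\dots,w_{m-1}$ lies in a fixed $A_t$ class, hence $d\widehat{w}$ is a doubling measure; and (b) a transformed weight $W$ built from $w_m$ lies in a classical off-diagonal class $A_{p_m/r_m,\,\delta_{m+1}/r_m}(\widehat{w})$ \emph{with respect to that doubling measure}. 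Moreover, the relevant norms $\|f\|_{L^p(w)}$ and $\|f_m\|_{L^{p_m}(w_m)}$ rewrite as weighted norms against $d\widehat{w}$. This reduces the problem to \emph{one-variable} off-diagonal extrapolation (Duoandikoetxea's Theorem~\ref{theor:duo}) in the last component, with $\widehat{w}$ as the ambient measure and $W$ as the single weight being varied. One then iterates, changing one $p_i$ to $q_i$ at a time and checking at each step that the intermediate vector still satisfies $\vec{r}\preceq\cdot$; this bookkeeping is the paper's Step~2. The point is that the ``interlocking'' you worry about is absorbed into the fixed measure $\widehat{w}$, and only a single weight moves at each stage.
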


\bigskip

As a direct corollary of our main theorem, taking $\vec r=(1,\dots, 1)$, we provide the promised multivariable Rubio de Francia extrapolation theorem:
\begin{Corollary}\label{coro:ap}
	Let $ \mathcal F$ be a collection of $m+1$-tuples of non-negative functions. 	Assume that we have 
	exponents $\vec p=(p_1,\dots, p_m)$, with  $1\le p_1,\dots, p_m<\infty$, such that given any $\vec w=(w_1,\dots,w_m) \in A_{\vec p}$ the inequality
	\begin{equation}\label{extrapol:H:CZO}
	\|f\|_{L^{p}(w)} \le C([\vec w]_{A_{\vec p}}) \prod_{i=1}^m\|f_i\|_{L^{p_i}(w_i)}
	\end{equation}
	holds for every $(f,f_1,\dots, f_m)\in \mathcal F$, where $\frac1p:=\frac1{p_1}+\dots+\frac1{p_m}$ and $w:=\prod_{i=1}^m w_i^{\frac{p}{p_i}}$.
	Then for all exponents $\vec q=(q_1,\dots,q_m)$, with $1<q_1,\dots, q_m<\infty$, and for all weights $\vec v=(v_1,\dots,v_m) \in A_{\vec q}$ the inequality
	\begin{equation}\label{extrapol:C:ap}
	\|f\|_{L^{q}(v)} \le C([\vec v]_{A_{\vec q}})\prod_{i=1}^m \|f_i\|_{L^{q_i}(v_i)}
	\end{equation}
	holds for every $(f,f_1,\dots,f_m)\in \mathcal F$, where $\frac1q:=\frac1{q_1}+\dots+\frac1{q_m}$ and $v:=\prod_{i=1}^m v_i^{\frac{q}{q_i}}$.
	
	Moreover, for the same family of exponents and
	weights, and for all exponents $\vec{s}=(s_1,\dots, s_m)$ with $1<s_1,\dots, s_m<\infty$, 
	\begin{equation}\label{extrapol:vv:ap}
	\bigg\|\Big(\sum_j (f^j)^s\Big)^\frac1s\bigg\|_{L^{q}(v)}
	\le
	C([\vec v]_{A_{\vec q}})
	\prod_{i=1}^m\bigg\|\Big(\sum_j (f_i^j)^{s_i}\Big)^\frac1{s_i}\bigg\|_{L^{q_i}(v_i)}
	\end{equation}
	for all $\{(f^j, f_1^j, \dots, f_m^j)\}_j\subset\mathcal{F}$, where $\frac1s:=\frac1{s_1}+\dots+\frac1{s_m}$.
\end{Corollary}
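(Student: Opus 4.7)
The proof plan is to verify that Corollary \ref{coro:ap} is obtained by specializing Theorem \ref{theor:extrapol-general} to the vector $\vec{r} = (1, 1, \ldots, 1)\in\re^{m+1}$. The whole task then reduces to checking three things: (i) under this choice the weight class $A_{\vec{p}, \vec{r}}$ collapses to $A_{\vec{p}}$; (ii) the order relations $\vec{r} \preceq \vec{p}$ and $\vec{r} \prec \vec{q}$ translate to the natural hypotheses on the exponents in the corollary; and (iii) the conclusions \eqref{extrapol:C} and \eqref{extrapol:vv} become exactly \eqref{extrapol:C:ap} and \eqref{extrapol:vv:ap}.

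For (i), inspection of $[\vec{w}]_{A_{\vec{p}, \vec{r}}}$ with all $r_i = 1$ shows that the convention $r_{m+1} = 1$ replaces the factor involving $w$ by $\big(\dashint_Q w\, \d x\big)^{1/p}$, while for $r_i = 1 < p_i$ the exponent $r_i/(r_i - p_i) = 1 - p_i'$ together with the outer exponent $1/r_i - 1/p_i = 1/p_i'$ recovers the standard factor $\big(\dashint_Q w_i^{1 - p_i'}\, \d x\big)^{1/p_i'}$; the limit case $p_i = 1 = r_i$ is handled by the essential-supremum convention, which matches the corresponding one in the definition of $A_{\vec{p}}$ recalled in the introduction. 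This coincidence is in fact already noted in the excerpt, so (i) requires nothing beyond unwinding definitions.

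For (ii) and (iii), recall that $\vec{r} \preceq \vec{p}$ means $r_i \le p_i$ for $i=1,\dots,m$ and $r_{m+1}' > p$. With $\vec{r} = (1, \ldots, 1)$ the first condition is free since $p_i \ge 1$, and the second reads $\infty > p$, also automatic. Hence the hypothesis \eqref{extrapol:H} coincides with \eqref{extrapol:H:CZO}. The strict version $\vec{r} \prec \vec{q}$ adds the requirement $r_i < q_i$, i.e.\ $q_i > 1$, which is precisely the hypothesis of the corollary on $\vec{q}$; analogously, $\vec{r}\prec\vec{s}$ becomes $s_i>1$ for all $i$ in the vector-valued estimate. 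Substituting these choices into \eqref{extrapol:C} and \eqref{extrapol:vv} of Theorem \ref{theor:extrapol-general} yields \eqref{extrapol:C:ap} and \eqref{extrapol:vv:ap} verbatim. There is no substantive obstacle at this stage: the entire content of the corollary is packaged in Theorem \ref{theor:extrapol-general}, whose proof carries the real difficulty of extrapolating across the classes $A_{\vec{p}, \vec{r}}$ and, in particular, of reaching the quasi-Banach target.
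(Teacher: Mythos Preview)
Your proposal is correct and follows exactly the paper's approach: the paper states explicitly that Corollary~\ref{coro:ap} is obtained from Theorem~\ref{theor:extrapol-general} by taking $\vec r=(1,\dots,1)$, and your argument simply unpacks the details of this specialization. The verifications you give for the weight class, the order relations, and the conclusions are all routine and accurate.
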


\medskip

\begin{Remark}
As done in \cite[Section 6]{grafakos-martell04} one can formulate the previous results in terms of weak-type estimates. More precisely, in the context of Theorem \ref{theor:extrapol-general}, if in \eqref{extrapol:H} the left hand side term is replaced by $\|f\|_{L^{p,\infty}(w)} $ then in the conclusion we will have $\|f\|_{L^{q,\infty}(v)}$. The same occurs with Corollary \ref{coro:ap}. 
\end{Remark}

\medskip

\begin{Remark}
As discussed in \cite[Section 1]{CM-extrapol} one can easily get versions of the previous results where we make the a priori assumption that the left-hand sides of both our hypothesis and conclusion are finite. In certain applications this assumption is reasonable: for instance, when proving
Coifman-Fefferman type inequalities (cf. \cite{CMP}). The precise formulations and the proofs are left to the interested reader. 
\end{Remark}

\medskip

\begin{Remark}\label{remark:end-point}
	One can see that in Theorem \ref{theor:extrapol-general} if we start with $p_{i_0}=r_{i_0}$ for some given $i_0$ then in the conclusion we can relax $q_{i_0}>r_{i_0}$ to $q_{i_0}\ge r_{i_0}$, likewise, for Corollary  \ref{coro:ap}, if $p_{i_0}=1$ for some given $i_0$ then in the conclusion we can allow $q_{i_0}\ge 1$. To justify this, one just needs to apply the extrapolation procedure in all the other components which will eventually prove the case $q_{i_0}=p_{i_0}=r_{i_0}$. Apply finally the extrapolation on  the component $i_0$ to obtain the case $q_{i_0}>r_{i_0}$. Further details are left to the interested reader.
	\end{Remark}

\medskip

\medskip

\begin{Remark}\label{remark:iteration}
The formalism of extrapolation families is very useful to derive vector-valued inequalities. Indeed, as we will in Section \ref{section:proof:v-v}, from the main part of Theorem \ref{theor:extrapol-general} (that is, the fact that \eqref{extrapol:H} implies \eqref{extrapol:C}) one can easily obtain that \eqref{extrapol:vv} holds. This is done by extrapolation choosing an appropriate extrapolation family. This idea can be further exploited to obtain weighted  vector-valued inequalities  of the form 
\begin{equation}\label{vv-iterated}
\bigg\| \bigg(\sum_j 
\bigg(\sum_k (f^{jk})^s\bigg)^{\frac{t}{s}}
\bigg)^{\frac{1}{t}}
\bigg\|_{L^p(v)} 
\lesssim
\prod_{i=1}^m
\bigg\| \bigg(\sum_j 
\bigg(\sum_k (f_i^{jk})^{s_i}\bigg)^{\frac{t_i}{s_i}}
\bigg)^{\frac{1}{t_i}}
\bigg\|_{L^{q_i}(v_i)}. 
\end{equation}
The argument to show this is in Section  \ref{section:iterarion} and it is straightforward to see that one can repeat this procedure to obtain iterated vector-valued inequalities with arbitrary number of ``sums''. The precise statements are left to the interested the reader. 
\end{Remark}

To conclude with this introduction let us briefly present one of the novel ideas that we introduce to obtain our extrapolation result. As mentioned before, the conditions $A_{\vec{p}}$ or $A_{\vec{p},\vec{r}}$ contain  implicitly some link between the different components of the vector weight. In  Lemma \ref{lemma:main} we present a structural result for the classes $A_{\vec{p},\vec{r}}$ which makes this connection explicit. Although the result is quite technical, in the bilinear case and for the class $A_{(1,1)}$ it has a very simple and illustrative statement: $\vec{w}=(w_1,w_2)\in A_{(1,1)}$ if and only if $w_1^{\frac12}\in A_1$ and $w_2^{\frac12}\in A_1(w_1^{\frac12})$ (the latter means that $w_2^{\frac12}$ satisfies an $A_1$ condition with respect to the underlying measure $w_1^{\frac12}$). This equivalence allows us, among other things, to easily construct vector weights in $A_{(1,1)}$:  we simply use that  $A_1$ weights are essentially a Hardy-Littlewood maximal function raised to a power strictly smaller than $1$.  On the other hand, we highlight that this is one of the key ideas that have allowed us to obtain our Rubio de Francia extrapolation for multilinear Muckenhoupt classes: roughly speaking we reduce matters to some off-diagonal extrapolation in one component with respect to some fixed doubling measure (in the previous example this would be $w_1^{\frac12}$) and then iterate this procedure for the other components.

The plan of the paper is as follows. In the following section we present some immediate applications of our extrapolation results. We easily reprove some vector-valued inequalities for bilinear (for the sake of specificity) Calderón-Zygmund operators. Next we look into some sparse domination formulas and how these easily give estimates in the Banach case (by a direct computation) and in the quasi-Banach case by extrapolation. From these we automatically obtain vector-valued inequalities. In Section \ref{section:BHT} we pay special attention to the important case of the bilinear Hilbert transform and explain how extrapolation  easily produces a plethora of estimates starting from the weighted norm inequalities proved in \cite{CPO2016}. The study of the commutators with BMO functions is in Section \ref{section:comm} where we obtain estimates that are new for the bilinear Hilbert transform. In Section \ref{section:aux} we give some auxiliary results including Lemma \ref{lemma:main}  which gives the characterization of the class $A_{\vec p, \vec r}$ mentioned above. The proof of our main result and the estimates for the commutators are given respectively in Sections \ref{section:proof-main}  and \ref{section:proof-comm}.

\section{Applications}

Here we present some applications. In some cases we give elementary proofs of some known estimates but in other we prove new estimates. 

\subsection{Multilinear Calderón-Zygmund operators}

For the sake of conciseness let us just handle the bilinear case. We recall the definition of the bi-sublinear Hardy-Littlewood maximal function 
\[
\mathcal{M}(f,g)(x)=\sup_{Q\ni x} \Big(\dashint_Q |f(y)|dy\Big)\Big(\dashint_Q |g(y)|dy\Big).
\]

Given a bilinear operator $T$ a priori defined from $\mathcal S\times\mathcal S$ into $\mathcal S'$ of the form
\[
T(f,g)(x)=\int_{\mathbb R^n}\int_{\mathbb R^n} K(x, y, z)f(y)g(z)\,\d y\d z
\]
we say that $T$ is a bilinear Calderón-Zygmund operator if it can be extended as a bounded operator from $L^{p_1}\times L^{p_2}$ to $L^p$ for some $1<p_1, p_2<\infty$ with $1/p_1+1/p_2=1/p$, and its distributional kernel $K$ coincides, away from the diagonal $\{(x,y,z) \in \mathbb R^{3n}: x=y=z \}$, with a function  $K(x,y,z)$ locally integrable which satisfies estimates of the form
\[
|\partial^\alpha K(x,y,z)| \lesssim \big(|x-y| + |x-z| + |y-z|\big)^{-2n-|\alpha|}, |\alpha|\le 1.
\]
The estimates on $K$ above are not the most general that one can impose in such theory, see \cite{GT}. Bilinear Calderón-Zygmund operators and $\mathcal{M}$ are known to satisfy weighted norm inequalities for the classes $A_{\vec{p}}=A_{\vec{p},(1,1,1)}$, see \cite{LOPTT}. As a consequence of Theorem \ref{theor:extrapol-general} we easily obtain the following vector-valued inequalities:

\begin{Corollary}\label{corol:CZO}
	Let $T$ be a bilinear Calderón-Zygmund operator. For every $\vec{p}=(p_1,p_2)$, $\vec{s}=(s_1,s_2)$ with $1<p_1,p_2,s_1,s_2<\infty$ and for every $\vec{w}=(w_1,w_2)\in A_{\vec{p}} $ one has
	\[
	\bigg\|\Big(\sum_j \mathcal{M}(f_j,g_j)^s\Big)^\frac1s\bigg\|_{L^{p}(w)}
	\lesssim
	\bigg\|\Big(\sum_j |f_j|^{s_1}\Big)^\frac1{s_1}\bigg\|_{L^{p_1}(w_1)}
	\bigg\|\Big(\sum_j |g_j|^{s_2}\Big)^\frac1{s_2}\bigg\|_{L^{p_2}(w_2)}
	\]
	and
	\[
	\bigg\|\Big(\sum_j |T(f_j,g_j)|^s\Big)^\frac1s\bigg\|_{L^{p}(w)}
	\lesssim
	\bigg\|\Big(\sum_j |f_j|^{s_1}\Big)^\frac1{s_1}\bigg\|_{L^{p_1}(w_1)}
	\bigg\|\Big(\sum_j |g_j|^{s_2}\Big)^\frac1{s_2}\bigg\|_{L^{p_2}(w_2)},
	\]
	where $\frac1p=\frac1{p_1}+\frac1{p_2}$, $\frac1s=\frac1{s_1}+\frac1{s_2}$, and  $w=w_1^{\frac{p}{p_1}}w_2^{\frac{p}{p_1}}$.
\end{Corollary}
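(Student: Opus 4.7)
The plan is to apply the vector-valued conclusion \eqref{extrapol:vv:ap} of Corollary \ref{coro:ap}, using as input the known scalar weighted estimates for $\mathcal{M}$ and $T$ from \cite{LOPTT}. The abstract formalism of extrapolation families is tailor-made for this: one merely needs to choose the right family $\mathcal{F}$ and verify the scalar hypothesis \eqref{extrapol:H:CZO} for one convenient exponent vector $\vec{p}$.

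For the maximal function, I would take $\mathcal{F}_{\mathcal{M}}$ to be the collection of all triples $(\mathcal{M}(f,g), f, g)$ with $f, g$ non-negative, bounded, and compactly supported. The main theorem of \cite{LOPTT} asserts that for any fixed $\vec p=(p_1,p_2)$ with $1<p_1,p_2<\infty$, $1/p=1/p_1+1/p_2$, and any $\vec w\in A_{\vec p}$,
\[
\|\mathcal{M}(f,g)\|_{L^p(w)}\;\le\; C([\vec w]_{A_{\vec p}})\,\|f\|_{L^{p_1}(w_1)}\|g\|_{L^{p_2}(w_2)},
\]
which is exactly the scalar hypothesis \eqref{extrapol:H:CZO} for this family. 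Corollary~\ref{coro:ap} then yields the vector-valued estimate \eqref{extrapol:vv:ap} for $\mathcal{F}_{\mathcal{M}}$, which is the first bound in the statement. For $T$, I would repeat the argument with the family $\mathcal{F}_T=\{(|T(f,g)|,|f|,|g|): f,g\in\mathcal{S}\}$ (extended by density to the relevant class of inputs). Again \cite{LOPTT} furnishes the scalar $A_{\vec p}$-weighted estimate, giving \eqref{extrapol:H:CZO}, and Corollary~\ref{coro:ap} delivers \eqref{extrapol:vv:ap}, which is the second bound.

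The main (mild) obstacle is bookkeeping of the extrapolation family so that the vector-valued conclusion \eqref{extrapol:vv:ap} applies verbatim to the sequences $\{(f_j,g_j)\}_j$ appearing in the statement; this is handled by the fact that $\mathcal{F}$ is closed under taking arbitrary subcollections, so once \eqref{extrapol:H:CZO} is verified uniformly over $\mathcal{F}_{\mathcal{M}}$ (resp.\ $\mathcal{F}_T$), the conclusion applies to any sequence drawn from that family. A standard density/truncation argument extends the estimates to all $f_j,g_j$ for which the right-hand side is finite. No genuine analytic work is required beyond citing \cite{LOPTT} and invoking Corollary~\ref{coro:ap}; the whole content of the corollary is that the vector-valued part of the extrapolation theorem gives these inequalities for free.
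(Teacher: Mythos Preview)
Your approach is correct and matches the paper's: the scalar $A_{\vec p}$-weighted bounds for $\mathcal{M}$ and $T$ from \cite{LOPTT} serve as the hypothesis \eqref{extrapol:H:CZO}, and the vector-valued conclusion \eqref{extrapol:vv:ap} of Corollary~\ref{coro:ap} (equivalently Theorem~\ref{theor:extrapol-general} with $\vec r=(1,1,1)$) delivers both inequalities immediately. The paper does not give any further argument beyond this citation-plus-extrapolation, so your write-up is already more detailed than what appears there.
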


 Notice that as explained in Remark \ref{remark:iteration} one can easily obtain iterated weighted vector-valued inequalities, the precise statement is left to the interested reader. 

We can also use extrapolation to prove Marcinkiewicz-Zygmund inequalities for multilinear Calderón-Zygmund operators (here as before we just present the bilinear case).   Very recently Carando \textit{et al.} \cite{Carando:2016vm} extended some result from \cite{grafakos-martell04} and \cite{BPV} by proving the following weighted
Marcinkiewicz-Zygmund inequalities. Let $T$ be a bilinear Calderón-Zygmund operator. Let $1<r\le 2$ and let $1<q_1,q_2<\infty$ if $r=2$ or $1<q_1,q_2<r$ if $1<r<2$. Then for $\vec{w}=(w_1,w_2)\in A_{\vec{q}}$  there holds
	\begin{equation} \label{eqn:CZ-MZ0}
	\bigg\| \bigg(\sum_{i,j } 
	|T(f_{i},g_{j})|^r\bigg)^{\frac{1}{r}}
	\bigg\|_{L^q(w)} 
	\leq 
	\bigg\| \bigg( \sum_{i} |f_{i}|^r\bigg)^{\frac{1}{r}}\bigg\|_{L^{q_1}(w_1)}
	\bigg\| \bigg( \sum_{j} |g_{j}|^r\bigg)^{\frac{1}{r}}
	\bigg\|_{L^{q_2}(w_2)}, 
	\end{equation}
	where $\frac{1}{q}=\frac{1}{q_1}+\frac{1}{q_2}$ and $w=w_1^{\frac q{q_1}}w_2^{\frac q{q_2}}$.
	
By using extrapolation we can remove the restriction $q_1,q_2<r$ when $1<r<2$ (for a version of the following result in the context of product of Muckenhoupt classes the reader is referred to \cite{CM-extrapol}).

\begin{Corollary} \label{coro:CZ-MZ-new}
Let $T$ be bilinear Calderón-Zygmund operator. Given $1<r\le 2$ and $1<q_1,q_2<\infty$,  then
\eqref{eqn:CZ-MZ0} holds for all $\vec{w}=(w_1,w_2)\in A_{\vec{q}}$.
\end{Corollary}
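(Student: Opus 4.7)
The plan is to apply the multivariable Rubio de Francia extrapolation theorem (Corollary \ref{coro:ap}) to a cleverly chosen extrapolation family, using the result of Carando \textit{et al.} as the starting hypothesis. Since the case $r=2$ is already contained in \eqref{eqn:CZ-MZ0}, we only need to treat $1<r<2$, and the goal is to remove the restriction $q_1,q_2<r$.

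The key step is to set up the right extrapolation family. I would fix $p_1,p_2$ with $1<p_1,p_2<r$ and let $\mathcal{F}$ be the collection of all $3$-tuples of non-negative functions of the form
\[
\bigg(\Big(\sum_{i,j}|T(f_i,g_j)|^r\Big)^{1/r},\ \Big(\sum_i |f_i|^r\Big)^{1/r},\ \Big(\sum_j |g_j|^r\Big)^{1/r}\bigg),
\]
as $\{f_i\}_i$ and $\{g_j\}_j$ run over arbitrary finite collections of functions (finiteness is a harmless convention that lets one invoke a routine monotone convergence/density argument at the end). With this family and the exponents $\vec{p}=(p_1,p_2)$, the hypothesis \eqref{extrapol:H:CZO} of Corollary \ref{coro:ap} is precisely the Carando \textit{et al.} inequality \eqref{eqn:CZ-MZ0} at the exponents $(p_1,p_2)$, which is valid for every $\vec{w}\in A_{\vec{p}}$ since $1<p_1,p_2<r$.

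Having verified the hypothesis of Corollary \ref{coro:ap}, the conclusion \eqref{extrapol:C:ap} applied to the family $\mathcal{F}$ yields exactly \eqref{eqn:CZ-MZ0} for the desired target exponents $\vec{q}=(q_1,q_2)$ with $1<q_1,q_2<\infty$ and every $\vec{w}\in A_{\vec{q}}$, finishing the proof. I do not anticipate any real obstacle: the argument is a textbook extrapolation application once the right family is identified, and the only mildly technical point is the reduction to finite index sets (to avoid \textit{a priori} infiniteness on the hypothesis side), which is dealt with by monotone convergence on both sides of the inequality as the collections exhaust $\{f_i\}_{i\in\mathbb{N}},\{g_j\}_{j\in\mathbb{N}}$.
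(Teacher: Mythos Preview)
Your proposal is correct and matches the paper's intended approach: the paper does not spell out a proof of this corollary but simply remarks that ``by using extrapolation we can remove the restriction $q_1,q_2<r$ when $1<r<2$,'' and the extrapolation in question is exactly Corollary~\ref{coro:ap} applied to the family of $3$-tuples you wrote down, with the Carando \textit{et al.} estimate at some $\vec{p}=(p_1,p_2)$ with $1<p_1,p_2<r$ serving as the input hypothesis~\eqref{extrapol:H:CZO}. Your handling of the finite-index reduction via monotone convergence is also in line with how the paper treats vector-valued families (see Section~\ref{section:proof:v-v}).
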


\subsection{Multilinear sparse forms:  bilinear Calderón-Zygmund operators}

In the previous section we derive vector-valued estimates for bilinear Calderón-Zyg\-mund operators as a consequence of the theory developed in \cite{LOPTT}. Here we 
would like to show that these can be easily obtained from a particular choice of $\vec{p}$ with the help of a certain sparse domination which will also motivate the definition of more general multilinear sparse operators.  We start with an estimate proved independently and simultaneously in \cite{conde-rey} and \cite{lerner-nazarov}: for any $f,g\in C^\infty_c(\re^n)$, one has
\begin{equation}\label{sparsedom-BCZO}
|T(f,g)|\lesssim
\sum_{i=1}^{3^n} T_{\mathcal{S}_i}(f,g),
\end{equation}
where for each $i$, $\mathcal{S}_i=\{Q\}\subset\mathbb{D}_i$ (here $\mathbb{D}_i$ is a dyadic grid) is a sparse family with sparsity constant $\frac12$ and
\[
T_{\mathcal{S}_i}(f,g)
=
\sum_{Q\in\mathcal{S}_i} \Big(\dashint_Q |f|dx\Big) \Big(\dashint_Q |g|dx\Big) \chi_Q.
\]	
Let us recall that $\mathcal{S}=\{Q\}$ is sparse family with constant $\zeta\in (0,1)$ if for every  $Q\in\mathcal{S}$ there exists $E_Q\subset Q$ such that $|E_Q|>\zeta |Q|$ and the sets $\{E_Q\}_{Q\in \mathcal{S}}$ are pairwise disjoint. 

To proceed we follow an argument in \cite{DLP}, see also \cite{LMS}, which in turn is a bilinear extension of the linear case proof in \cite{CMP-AIM}. We pick the ``natural'' exponents $p_1=p_2=3$ and $p=\frac32$, and take $\vec{w}=(w_1,w_2)\in A_{(3,3)}$, let $w=w_1^{\frac12}w_2^{\frac12}$, and write $\sigma_1=w_1^{1-p_1'}=w_1^{-\frac12}$, $\sigma_2=w_2^{1-p_2'}=w_2^{-\frac12}$. Without loss of generality we may assume that $f, g\ge 0$ and use duality to see that there exists $0\le h\in L^{3}(w^{-2})$ with $\|h\|_{L^{3}(w^{-2})}=1$ such that
\begin{equation}\label{BCZO}
\|T(f,g)\|_{L^{\frac32}(w)}
=
\int_{\re^n} h|T(f,g)|dx
\lesssim
\sup_{\mathcal{S}}
\int_{\re^n} h T_{\mathcal{S}}(f,g)dx
=
\sup_{\mathcal{S}} \Lambda_{\mathcal{S}} (f,g,h),
\end{equation}
where we have used \eqref{sparsedom-BCZO}, 
\begin{equation}
\Lambda_{\mathcal{S}} (f,g,h):=
\sum_{Q\in\mathcal{S}}|Q|\Big(\dashint_Q h dx\Big)\Big(\dashint_Q fdx\Big) \Big(\dashint_Q gdx\Big), 
\end{equation}
and the sup runs over all sparse collections $\mathcal{S}$ with sparsity constant $\frac12$. To continue with our estimate we just need to estimate an arbitrary $\Lambda_{\mathcal{S}}$:
\begin{align}\label{arfer}
&\Lambda_{\mathcal{S}} (f,g,h)
=
\sum_{Q\in\mathcal{S}}|Q|
\Big(\dashint_Q h w^{-1} dw\Big) 
\Big(\dashint_Q f \sigma_1^{-1} d\sigma_1\Big) 
\Big(\dashint_Q g \sigma_2^{-1} d\sigma_2\Big) 
\\
&
\hskip3cm\times
\Big(\dashint_Q wdx\Big)
\Big(\dashint_Q \sigma_1 dx\Big) \Big(\dashint_Q \sigma_2dx\Big) 
\\ \nonumber
&
\quad\le
2[\vec{w}]_{A_{(3,3)}}^{\frac32}
\sum_{Q\in\mathcal{S}} 
|E_Q|
\Big(\dashint_Q h w^{-1} dw\Big) 
\Big(\dashint_Q f \sigma_1^{-1} d\sigma_1\Big) 
\Big(\dashint_Q g \sigma_2^{-1} d\sigma_2\Big)
\\ \nonumber
&
\quad\le
2[\vec{w}]_{A_{(3,3)}}^{\frac32}
\int_{\re^n} 
M_{w}^{\mathbb{D}}(h w^{-1})
M_{\sigma_1}^{\mathbb{D}}(f\sigma_1^{-1}) M_{\sigma_2}^{\mathbb{D}}(g\sigma_2^{-1})
\\ \nonumber
&
\quad\le
2[\vec{w}]_{A_{(3,3)}}^{\frac32}
\|M_{w}^{\mathbb{D}}(hw^{-1})\|_{L^3(w)}
\|M_{\sigma_1}^{\mathbb{D}}(f\sigma_1^{-1})\|_{L^3(\sigma_1)}
\|M_{\sigma_2}^{\mathbb{D}}(g\sigma_2^{-1})\|_{L^3(\sigma_2)}
\\ \nonumber
&
\quad\le
\frac{27}{4}
[\vec{w}]_{A_{(3,3)}}^{\frac32}
\|hw^{-1}\|_{L^3(w)}
\|f\sigma_1^{-1}\|_{L^3(\sigma_1)}
\|g\sigma_2^{-1}\|_{L^3(\sigma_2)}
\\ \nonumber
&
\quad
=
\frac{27}{4}
[\vec{w}]_{A_{(3,3)}}^{\frac32}
\|f\|_{L^3(w_1)}
\|g\|_{L^3(w_2)},
\end{align}
where we have used  that $\mathcal{S}$ is a sparse family with constant $\frac12$, hence the sets $\{E_Q\}_{Q\in \mathcal{S}}$ are pairwise disjoint; H\"older's inequality;  and finally that $M_{\mu}^{\mathbb{D}}$, the dyadic maximal operator associated with the dyadic grid $\mathbb{D}$ and with underlying measure $\mu$, is bounded  on $L^3(\mu)$ with bound $3'=\frac32$ (see for instance \cite[Lemma 2.3]{CMP-AIM}). Collecting the obtained estimates we therefore conclude that $T:L^3(w_1)\times L^3(w_2)\to L^{\frac32}(w)$ for every $\vec{w}=(w_1,w_2)\in A_{(3,3)}$ and where $w=w_1^{\frac12}w_2^{\frac12}$. Using Corollary  \ref{coro:ap} with $\vec{p}=(3,3)$ and the  family $\mathcal{F}$ consisting in the collection of $3$-tuples  $(|T(f,g)|,|f|,|g|)$ with $f,g\in C^\infty_c(\re^n)$ we easily conclude that $T:L^{p_1}(w_1)\times L^{p_2}(w_2)\to L^p(w)$ for every $1<p_1,p_2<\infty$ where $\frac1p=\frac1{p_1}+\frac1{p_1}$, $\vec{w}=(w_1,w_2)\in A_{(p_1,p_2)}$ and $w=w_1^{\frac{p}{p_1}}w_2^{\frac{p}{p_2}}$. Also, we automatically get  the corresponding the vector-valued inequalities.

\subsection{Multilinear sparse forms: the general case}\label{section:multi-sparse}

In this section we present some multilinear sparse forms whose weighted norm inequalities are governed by the class $A_{\vec{p},\vec{r}}$. Here we would like to emphasize that the natural argument based on duality gives estimates in the Banach range, and extrapolation allows us to extend them to the case on which the target space is quasi-Banach. Let us introduce the sparse forms. Given a dyadic grid $\mathbb{D}$, a sparse family $\mathcal{S}\subset \mathbb{D}$, and
$\vec{r}=(r_1,\dots, r_{m+1})$ with $r_i\ge 1$, for every $1\le i\le m+1$, and $\frac1{r_1}+\dots+\frac1{r_{m+1}}>1$, let us define
\[
\Lambda_{\mathcal{S},\vec{r}} (f_1, \dots, f_m, h)=\sum_{Q\in \mathcal S} |Q|\Big(\dashint_Q
|h|^{r_{m+1}}dx\Big)^{\frac 1{r_{m+1}}}\prod_{i=1}^m\Big(\dashint_Q |f_i|^{r_i}dx\Big)^{\frac
	1{r_i}}.
\]
Our goal is to present a general framework to establish weighted estimates for operators which are controlled by sparse forms $\Lambda_{\mathcal{S},\vec{r}}$. In that case we only need to establish the corresponding estimates for  $\Lambda_{\mathcal{S},\vec{r}}$ and this is a quite easy task.

Fix $\vec{r}=(r_1,\dots,r_{m+1})$, with  $r_i\ge 1$ for $1\le i\le m+1$ and a sparsity constant $\zeta\in(0,1)$. Consider an operator $T$ (we do not need any linearity or sublinearity) and we seek to show that  $T:L^{p_1}(w_1)\times\dots\times L^{p_m}(w_m)\to L^p(w)$
for $\vec{p}$ in some range and where as usual $w=\prod_{i=1}^m w_i^{\frac p{p_i}}$. By duality, and provided that $p>1$, we can find $0\le h\in L^{p'}(w^{1-p'}) $ with $\|h\|_{L^{p'}(w^{1-p'})}=1$  so that
\[
\|T (f_1, \dots, f_m)\|_{L^p(w)}
=
\int_{\re^n} h|T (f_1, \dots, f_m)|dx.
\]
Our main assumption is that 
\begin{equation}\label{T-dom-form}
\|T (f_1, \dots, f_m)\|_{L^p(w)}
=
\int_{\re^n} h|T (f_1, \dots, f_m)|dx
\lesssim 
\sup_{\mathcal{S}}\Lambda_{\mathcal{S},\vec{r}}(f_1,\dots,f_m,h),
\end{equation}
where the sup runs over all sparse families with sparsity constant $\zeta$.  

We next present some operators satisfying the previous assumption. First, if $T$ is a bilinear Calderón-Zygmund operator as before, then \eqref{BCZO} shows that \eqref{T-dom-form} holds with $\vec{r}=(1,1,1)$ and the sparsity constant is $\zeta=\frac12$ ---the same occurs with multilinear Calderón-Zygmund operators 
with $\vec{r}=(1,\dots,1)$ (see \cite{conde-rey} or \cite{lerner-nazarov}). 
The second example is a class of rough bilinear singular integrals studied by Barron for which 
\eqref{T-dom-form} holds for $\vec{r}=(r_1,r_2,r_3)$ with any  $1<r_1,r_2,r_3<\infty$ (see  \cite[Therorem 1]{barron}). The last and the most prominent example is that of the bilinear Hilbert transform defined as
\[
BH(f, g)(x)=\text{p.v}\,\int_{\mathbb R}f(x-t)g(x+t)\frac{dt}{t}.
\]
It is a bilinear operator whose multiplier, unlike the ones for bilinear Calderón-Zygmund operators which are singular only at the origin, is singular along a line when viewed in the frequency plane.  Lacey and Thiele \cite{LT1, LT2} (see also \cite{GL}) showed that $BH$ maps $L^{p_1}\times L^{p_2}\to L^p$ for $1<p_1,p_2\le\infty$ and $\frac1p=\frac1{p_1}+\frac1{p_2}<\frac32$.   In \cite[Theorem 2]{CPO2016} (see also \cite{BM3}), this operator and some other bilinear multipliers have been shown to satisfy \eqref{T-dom-form}
with $\vec{r}=(r_1,r_2,r_3)$ satisfying $1<r_1,r_2,r_3<\infty$ and
\begin{equation}\label{cond-adm-mot}
\frac1{\min\{r_1,2\}}+\frac1{\min\{r_2,2\}}+\frac1{\min\{r_3,2\}}<2.
\end{equation}

\medskip

Continuing with our argument, for all the previous examples we are going to see that \eqref{T-dom-form} allows us to reduce the weighted norm inequalities of $T$ to those of the sparse forms $\Lambda_{\mathcal{S},\vec{r}}$. However, this eventually produces estimates in $L^p(w)$ where $p> 1$ and this is where extrapolation is relevant, since it permits us to  easily remove such restriction. As done before for the bilinear Calderón-Zygmund operators we are going to obtain estimates for some particular choice of $\vec{p}$ (let us note that in the previous references ---except for \cite{BM3}--- more general $\vec{p}$ are considered but the constrain $\frac1p<1$ is always present). 
Take $\vec{p}=(p_1,\dots,p_m)$ where
\[
p_i=\frac{r_i}{r},
\quad 1\le i\le m,
\qquad\mbox{and}\qquad
\frac1r=\sum_{i=1}^{m+1} \frac1{r_i}>1,
\]
and let
\[
\frac1{p}
=
r\sum_{i=1}^{m} \frac1{r_i}
=
1-\frac{r}{r_{m+1}}
=
\frac1{(\frac{r_{m+1}}{r})'}.
\]
Note that $\vec{r}\prec\vec{p}$. Fix $\vec{w}\in A_{\vec{p},\vec{r}}$ and let $w=\prod_{i=1}^m w_i^{\frac p{p_i}}$. We set
\begin{equation}\label{defi-dualweights}
\sigma_{m+1}=w^{\frac {r_{m+1}'}{r_{m+1}'- p}}
=
w^{(p'-1)\frac{r}{1-r}};
\qquad
\sigma_i=w_i^{\frac{r_i}{r_i-p_i}}=w_i^{-\frac{r}{1-r}}, \quad 1\le i\le m,
\end{equation}
hence
\begin{equation}\label{obs:Apr-case}
[w]_{A_{\vec{p},\vec{r}}}^{\frac1{1-r}}
=
\sup_Q\Big(\dashint_Q w^{\frac {r_{m+1}'}{r_{m+1}'- p}}\d x\Big)^{\frac
	1{r_{m+1}}}
\prod_{i=1}^m \Big(\dashint_Q w_i^{\frac{r_i}{r_i-p_i}}\d x\Big)^{\frac 1{r_i}}
=
\sup_Q
\prod_{i=1}^{m+1} \Big(\dashint_Q \sigma_i\d x\Big)^{\frac 1{r_i}}
\end{equation}
and
\begin{equation}\label{prod-weights1}
\prod_{i=1}^{m+1}\sigma_i^{\frac{r}{r_i}}
=
w^{\frac{r}{p(1-r)}}
\prod_{i=1}^{m}w_i^{-\frac{r}{p_i(1-r)}}
=
1
\end{equation}

Assume next that $f_1,\dots,f_m, h\ge 0$ and set $f_{m+1}=h$. Given a sparse family $\mathcal{S}\subset\mathbb{D}$ with sparsity constants $\zeta$,  we proceed as in \eqref{arfer} to obtain
\begin{align}\label{wgeagste}
&\Lambda_{\mathcal{S},\vec{r}} (f_1,\dots, f_m ,h)
=
\sum_{Q\in \mathcal S} |Q|
\prod_{i=1}^{m+1}\Big(\dashint_Q f_i^{r_i}dx\Big)^{\frac
	1{r_i}}
\\  \nonumber
&\qquad=
\sum_{Q\in\mathcal{S}}|Q| \prod_{i=1}^{m+1}\Big(\dashint_Q f_i^{r_i}\sigma_i^{-1} d\sigma_i\Big)^{\frac
	1{r_i}}\prod_{i=1}^{m+1} \Big(\dashint_Q \sigma_i\d x\Big)^{\frac 1{r_i}}
\\ \nonumber
&
\qquad\le
\zeta^{-1} [\vec{w}]_{A_{\vec{p},\vec{r}}}^{\frac1{1-r}}
\sum_{Q\in\mathcal{S}} 
|E_Q|
\prod_{i=1}^{m+1}\Big(\dashint_Q f_i^{r_i}\sigma_i^{-1} d\sigma_i\Big)^{\frac
	1{r_i}}
\\ \nonumber
&
\qquad\le
\zeta^{-1}[\vec{w}]_{A_{\vec{p},\vec{r}}}^{\frac1{1-r}}
\int_{\re^n} 
\prod_{i=1}^{m+1}M_{\sigma_i}^{\mathbb{D}}(f_i^{r_i}\sigma_i^{-1})^\frac1{r_i} dx
\\ \nonumber
&
\qquad =
\zeta^{-1} [\vec{w}]_{A_{\vec{p},\vec{r}}}^{\frac1{1-r}}
\int_{\re^n} 
\prod_{i=1}^{m+1}M_{\sigma_i}^{\mathbb{D}}(f_i^{r_i}\sigma_i^{-1})^\frac1{r_i} \sigma_i^\frac{r}{r_i} dx
\\ \nonumber
&
\qquad\le
\zeta^{-1} [\vec{w}]_{A_{\vec{p},\vec{r}}}^{\frac1{1-r}}
\prod_{i=1}^{m+1}\big\|M_{\sigma_i}^{\mathbb{D}}(f_i^{r_i}\sigma_i^{-1})\big\|_{L^{\frac1{r}}(\sigma_i)}^{\frac1{r_i}}
\\ \nonumber
&
\qquad\le
\zeta^{-1} (1-r)^{-(m+1)}[\vec{w}]_{A_{\vec{p},\vec{r}}}^{\frac1{1-r}}
\prod_{i=1}^{m+1}\big\|f_i^{r_i}\sigma_i^{-1}\|_{L^{\frac1{r}}(\sigma_i)}^{\frac1{r_i}}
\\
&
\qquad \nonumber
=
\zeta^{-1} (1-r)^{-(m+1)}[\vec{w}]_{A_{\vec{p},\vec{r}}}^{\frac1{1-r}}
\|h\|_{L^{p'}(w^{1-p'})}
\prod_{i=1}^{m+1}\|f_i\|_{L^{p_i}(w_i)},
\end{align}
where we have used  that $\mathcal{S}$ is a sparse family with constant $\zeta$,  hence the sets $\{E_Q\}_{Q\in \mathcal{S}}$ are pairwise disjoint; \eqref{prod-weights1}; H\"older's inequality along with $\sum_{i=1}^m \frac{r}{r_i}=1$; that $M_{\mu}^{\mathbb{D}}$ (the dyadic maximal operator associated with the dyadic grid $\mathbb{D}$ and with underlying measure $\mu$) is bounded  on $L^{\frac1r}(\mu)$ with bound $(1-r)^{-1}$ since $r<1$ (see for instance \cite[Lemma 2.3]{CMP-AIM}); and finally \eqref{defi-dualweights}. If we now plug the obtained inequality in \eqref{T-dom-form} and use that $\|h\|_{L^{p'}(w^{1-p'})}=1$ we conclude as desired that
\begin{equation}\label{staring-form}
\|T (f_1, \dots, f_m)\|_{L^p(w)}
\le 
\zeta^{-1} (1-r)^{-(m+1)}[\vec{w}]_{A_{\vec{p},\vec{r}}}^{\frac1{1-r}}
\prod_{i=1}^{m+1}\|f_i\|_{L^{p_i}(w_i)},
\end{equation}
for all   $\vec{w}\in A_{\vec{p},\vec{r}}$ where $w=\prod_{i=1}^m w_i^{\frac p{p_i}}$.

\begin{Remark}
	Notice that we have shown that $\vec{w}\in A_{\vec{p},\vec{r}}$ is sufficient for \eqref{wgeagste}. Furthermore,  it can be seen that it is also necessary. Indeed if we just take $\mathcal{S}$ consisting on a single arbitrary cube $Q$ and we let $f_i=\sigma_i^{\frac{1}{r_i}}\chi_Q$ then
	\begin{multline*}
	|Q|
	\prod_{i=1}^{m+1}\Big(\dashint_Q \sigma_i dx\Big)^{\frac
		1{r_i}}
	=
	\Lambda_{\mathcal{S},\vec{r}} (f_1,\dots, f_m ,f_{m+1})
	\\
	\le
	C_0
	\|f_{m+1}\|_{L^{p'}(w^{1-p'})}
	\prod_{i=1}^{m+1}\|f_i\|_{L^{p_i}(w_i)}
	=
	C_0 \prod_{i=1}^{m+1}\big\|f_i^{r_i}\sigma_i^{-1}\|_{L^{\frac1{r}}(\sigma_i)}^{\frac1{r_i}}
	=
	C_0 |Q|
	\end{multline*}
	which eventually leads to
	\[
	\prod_{i=1}^{m+1}\Big(\dashint_Q \sigma_i dx\Big)^{\frac
		1{r_i}}
	\le C_0.
	\]
	Taking the sup over all cubes and using \eqref{obs:Apr-case}  we immediately see that $\vec{w}\in A_{\vec{p},\vec{r}}$ with $[w]_{A_{\vec{p},\vec{r}}}\le C_0^{1-r}$.
\end{Remark}

If we now use \eqref{staring-form} as a starting estimate, Theorem \ref{theor:extrapol-general} immediately gives the following result: 
\begin{Corollary}\label{corol:sparese-general}
	Fix $\vec{r}=(r_1,\dots,r_{m+1})$, with  $r_i\ge 1$ for $1\le i\le m+1$, and $\sum_{i=1}^{m+1}\frac 1{r_i}>1$, and a sparsity constant $\zeta\in(0,1)$.
	Let $T$ be an operator 	so that for every $f_1,\dots ,f_m,h\in C_c^\infty(\re^n)$ 
\begin{equation}\label{sparse-domination}
	\int_{\re^n} h|T (f_1, \dots, f_m)|dx
	\lesssim 
	\sup_{\mathcal{S}}\Lambda_{\mathcal{S},\vec{r}}(f_1,\dots,f_m,h),
\end{equation}
	where the sup runs over all sparse families with sparsity constant $\zeta$. Then for all exponents $\vec q=(q_1,\dots,q_m)$, with $\vec{r}\prec\vec{q}$, for all weights $\vec v=(v_1,\dots,v_m) \in A_{\vec q,\vec{r}}$,  and for all $f_1,\dots, f_m\in C_c^\infty(\re^n)$
	\begin{equation}\label{extrapol:C:CZO}
	\|T(f_1,\dots, f_m)\|_{L^{q}(v)} \lesssim \prod_{i=1}^m \|f_i\|_{L^{q_i}(v_i)},
	\end{equation}
	where $\frac1q:=\frac1{q_1}+\dots+\frac1{q_m}$ and $v:=\prod_{i=1}^mv_i^{\frac{q}{q_i}}$.
	Moreover, for the same family of exponents and
	weights, and for all exponents $\vec{s}=(s_1,\dots, s_m)$ with $\vec{r}\prec\vec{s}$ 
	\begin{equation}\label{extrapol:vv:CZO}
	\bigg\|\Big(\sum_j |T(f_1^j,\dots, f_m^j)|^s\Big)^\frac1s\bigg\|_{L^{q}(v)}
	\lesssim
	\prod_{i=1}^m\bigg\|\Big(\sum_j |f_i^j|^{s_i}\Big)^\frac1{s_i}\bigg\|_{L^{q_i}(v_i)},
	\end{equation}
	for all $f_1^j,\dots, f_m^j\in C_c^\infty(\re^n)$ and where $\frac1s:=\frac1{s_1}+\dots+\frac1{s_m}$.
\end{Corollary}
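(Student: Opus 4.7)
The plan is to apply Theorem \ref{theor:extrapol-general} to the extrapolation family
\[
\mathcal{F}=\big\{\,(|T(f_1,\dots,f_m)|,\,|f_1|,\,\dots,\,|f_m|)\,:\,f_1,\dots,f_m\in C_c^\infty(\re^n)\,\big\}.
\]
Essentially all the work has already been carried out in the preceding subsection; what remains is to identify a valid starting exponent $\vec{p}$ and then quote the extrapolation theorem as a black box.

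Concretely, I would fix the ``natural'' exponents tied to $\vec{r}$ already used in the setup of \eqref{staring-form}: letting $\frac{1}{r}=\sum_{i=1}^{m+1}\frac{1}{r_i}>1$, set $p_i=\frac{r_i}{r}$ for $1\le i\le m$ and define $\frac{1}{p}=\sum_{i=1}^{m}\frac{1}{p_i}=1-\frac{r}{r_{m+1}}$. Since $r<1$, one has $r_i<p_i$ for every $i=1,\dots,m$, and moreover $\frac{1}{r_{m+1}'}=1-\frac{1}{r_{m+1}}<1-\frac{r}{r_{m+1}}=\frac{1}{p}$, whence $r_{m+1}'>p$. Thus $\vec{r}\prec\vec{p}$ (in particular $\vec{r}\preceq\vec{p}$), so $\vec{p}$ is an admissible starting exponent for Theorem \ref{theor:extrapol-general}. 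For any such $\vec{p}$ and every $\vec{w}\in A_{\vec{p},\vec{r}}$ with $w=\prod_{i=1}^{m}w_i^{p/p_i}$, the duality--sparse computation culminating in \eqref{staring-form} gives
\[
\|T(f_1,\dots,f_m)\|_{L^{p}(w)}\le C([\vec{w}]_{A_{\vec{p},\vec{r}}})\prod_{i=1}^{m}\|f_i\|_{L^{p_i}(w_i)}
\]
for every $(|T(f_1,\dots,f_m)|,|f_1|,\dots,|f_m|)\in\mathcal{F}$, which is precisely the hypothesis \eqref{extrapol:H} of Theorem \ref{theor:extrapol-general} applied to $\mathcal{F}$ at the exponent $\vec{p}$.

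With this hypothesis in hand, invoking the conclusion \eqref{extrapol:C} of Theorem \ref{theor:extrapol-general} yields \eqref{extrapol:C:CZO} for every $\vec{q}$ with $\vec{r}\prec\vec{q}$ and every $\vec{v}\in A_{\vec{q},\vec{r}}$, while the vector-valued conclusion \eqref{extrapol:vv} applied to the same family $\mathcal{F}$ immediately produces \eqref{extrapol:vv:CZO}. I do not foresee any genuine obstacle: the sparse domination hypothesis \eqref{sparse-domination} is used exactly once to secure \eqref{staring-form} at a single starting exponent, after which all the extension to the full range $\vec{r}\prec\vec{q}$ and to the quasi-Banach regime is handled by Theorem \ref{theor:extrapol-general}. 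Density or approximation issues are sidestepped because $\mathcal{F}$ is defined for $f_i\in C_c^\infty(\re^n)$, matching both the input of \eqref{sparse-domination} and the outputs \eqref{extrapol:C:CZO}--\eqref{extrapol:vv:CZO}.
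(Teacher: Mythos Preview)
Your proposal is correct and is essentially identical to the paper's own argument: the paper establishes \eqref{staring-form} at the specific exponent $p_i=r_i/r$ and then states that Theorem \ref{theor:extrapol-general} immediately gives Corollary \ref{corol:sparese-general}. Your verification that $\vec{r}\prec\vec{p}$ is spelled out in slightly more detail than in the paper, but the strategy is the same.
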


 We would like to observe that iterated weighted vector-valued inequalities can be obtained from Remark \ref{remark:iteration}, details are left to the interested reader.

\subsection{The bilinear Hilbert transform}\label{section:BHT}

In this section we establish weighted norm inequalities and vector-valued inequalities for the bilinear Hilbert transform. As discussed in the previous section this operator fits into Corollary \ref{corol:sparese-general} with $m=2$ and with $\vec{r}=(r_1,r_2,r_3)$ satisfying \eqref{cond-adm-mot}. As a matter of fact  it was shown in 
\cite[Theorem 3]{CPO2016} (see also \cite{BM3}) that 
\begin{equation}\label{BHT-dom-form}
\int_{\re^n} h|BH(f,g))|dx
\lesssim 
\sup_{\mathcal{S}}\Lambda_{\mathcal{S},\vec{r}}(f,g,h),
\end{equation}
where the sup runs over all sparse families with sparsity constant $\frac16$. As a consequence of this, it is obtained in \cite[Corollary 4]{CPO2016} that if $\vec{r}\prec\vec{p}$ with $p>1$ and $\vec{w}= (w_1, w_2)\in A_{\vec{p},\vec{r}}$ 
then
\begin{equation}\label{diP}
BH: L^{p_1}(w_1)\times L^{p_2}(w_2)\to L^p(w)	
\end{equation}	
with $w=w_1^{\frac{p}{p_1}}w_2^{\frac{p}{p_2}}$.
 As an immediate consequence of Theorem \ref{theor:extrapol-general} (or of Corollary \ref{corol:sparese-general}) we can get estimates for $p\le 1$ (reproving some of the estimates in \cite{BM3}). This extends the recent results in \cite{CM-extrapol} where the case of product of $A_p$ classes was obtained by extrapolation.

On the other hand, as a corollary we can prove vector-valued weighted norm inequalities which extend \cite{BM1, BM2} (see also \cite{CPO2016} when $p>1$ and \cite{HL}, \cite{Sil} for earlier results)
where the unweighted case was considered and reprove some of the estimates in \cite{BM3, BM4} (when all the exponents are finite). Also, we go beyond \cite[Section 5]{CM-extrapol} where weighted estimates were derived for product of $A_p$ classes. Here it is important to emphasize that \cite{CM-extrapol} did not recover the full range of vector-valued estimates from \cite{BM1, BM2} in the unweighted situation. The problem there is that extrapolation is done for  product of $A_p$ classes and this adds some unavoidable restriction in the exponents. Our extrapolation result is able to fine-tune and remove that restriction, this occurs since we work with more general classes of weights. Let us also note that our extrapolation allows us to obtain weighted estimates (and also vector-valued inequalities) in the quasi-Banach range (i.e., $p<1$) as a result of the estimates in the Banach case (i.e., $p\ge 1$).

\begin{Corollary}\label{corol:BH-1st}
	Let $\vec{r}=(r_1,r_2,r_3)$ be such that $1<r_1,r_2,r_3<\infty$ and
	\begin{equation}\label{cond-adm:corol}
	\frac1{\min\{r_1,2\}}+\frac1{\min\{r_2,2\}}+\frac1{\min\{r_3,2\}}<2.
	\end{equation}
	Let $\vec{p}=(p_1,p_2)$, $\vec{s}=(s_1,s_2)$  with $1<p_1, p_2, s_1,s_2<\infty$, and set  $\frac1p:=\frac1{p_1}+\frac1{p_2}$,  $\frac1s:=\frac1{s_1}+\frac1{s_2}$.
	If $\vec{r}\prec\vec{p}$ and $\vec{w}= (w_1, w_2)\in A_{\vec{p},\vec{r}}$  then
	\[
	BH: L^{p_1}(w_1)\times L^{p_2}(w_2)\to L^p(w),
	\]
	where $w=w_1^{\frac{p}{p_1}}w_2^{\frac{p}{p_2}}$. Moreover, if additionally  $\vec{r}\prec\vec{s}$ then $BH: L^{p_1}_{\ell^{s_1}}(w_1)\times L^{p_2}_{\ell^{s_2}}(w_2)\to L_{\ell^s}^p(w)$, that is,
	\[
	\bigg\|\Big(\sum_j |BH(f_j,g_j)|^s\Big)^\frac1s\bigg\|_{L^{p}(w)}
	\lesssim
	\bigg\|\Big(\sum_j |f_j|^{s_1}\Big)^\frac1{s_1}\bigg\|_{L^{p_1}(w_1)}
	\bigg\|\Big(\sum_j |g_j|^{s_2}\Big)^\frac1{s_2}\bigg\|_{L^{p_2}(w_2)}.
	\]
\end{Corollary}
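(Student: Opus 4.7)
The plan is to derive both assertions as an immediate specialization of Corollary \ref{corol:sparese-general} to $T = BH$ with $m = 2$. The only hypothesis of that corollary to verify is the sparse domination \eqref{sparse-domination} for the given $\vec r = (r_1,r_2,r_3)$. This is precisely \eqref{BHT-dom-form}, established in \cite[Theorem 3]{CPO2016} (see also \cite{BM3}) under an admissibility condition on $(r_1,r_2,r_3)$ that coincides with \eqref{cond-adm:corol}; the sparsity constant can be fixed at $\zeta = \tfrac{1}{6}$. So the input hypothesis of Corollary \ref{corol:sparese-general} is available for $BH$ under exactly the assumptions made in the statement.

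Once this is in place, I would apply Corollary \ref{corol:sparese-general} with $\vec q = \vec p$ and $\vec v = \vec w$. Since $\vec r \prec \vec p$ and $\vec w \in A_{\vec p, \vec r}$ by hypothesis, the scalar conclusion \eqref{extrapol:C:CZO} delivers the bound $BH : L^{p_1}(w_1) \times L^{p_2}(w_2) \to L^{p}(w)$ on Schwartz (or compactly supported smooth) inputs, from which the stated mapping follows by a standard density argument. The vector-valued inequality is then read off from \eqref{extrapol:vv:CZO}, now invoking the additional assumption $\vec r \prec \vec s$; the exponent $s$ defined by $\frac{1}{s} = \frac{1}{s_1} + \frac{1}{s_2}$ matches what the corollary produces, so the output is literally the displayed bound.

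The main ``obstacle'' is really no obstacle at this stage of the paper, since everything nontrivial has already been absorbed into Corollary \ref{corol:sparese-general}. That corollary in turn combines the duality computation \eqref{wgeagste}--\eqref{staring-form} for the distinguished dual exponents with $p > 1$ with the full multilinear Rubio de Francia extrapolation of Theorem \ref{theor:extrapol-general}; this is what allows the conclusion to reach the quasi-Banach range $p \le 1$ and to include the $\ell^s$-valued inequalities with no extra computation. In short, the proof is a one-line invocation: cite \eqref{BHT-dom-form} to verify \eqref{sparse-domination} for $BH$, then quote the two conclusions of Corollary \ref{corol:sparese-general}.
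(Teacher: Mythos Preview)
Your proposal is correct and follows essentially the same route as the paper: verify the sparse domination \eqref{BHT-dom-form} for $BH$ via \cite[Theorem 3]{CPO2016} under the admissibility condition \eqref{cond-adm:corol}, and then invoke Corollary~\ref{corol:sparese-general} with $m=2$, $\vec q=\vec p$, $\vec v=\vec w$ to obtain both the scalar and the vector-valued conclusions. The paper also notes the equivalent alternative of starting from the $p>1$ estimate \eqref{diP} of \cite{CPO2016} and applying Theorem~\ref{theor:extrapol-general} directly, but this is the same argument in a different packaging.
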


We note that in the previous result we must have $p>\frac23$ (and also $s>\frac23$). Indeed, the fact that $\vec{r}\prec\vec{p}$ and \eqref{cond-adm:corol} give
\[
\frac1p
=
\frac1{p_1}+\frac1{p_2}
<
\frac1{r_1}+\frac1{r_2}
\le
\frac1{\min\{r_1,2\}}+\frac1{\min\{r_2,2\}}
<
2-\frac1{\min\{r_3,2\}}
\le
\frac32.
\]

Corollary \ref{corol:BH-1st} can be reformulated in the following  equivalent form (details are left to the interested reader):  	given $\vec{p}=(p_1,p_2)$ with $1<p_1, p_2<\infty$ and setting $\frac1p:=\frac1{p_1}+\frac1{p_2}$,  if there exist 
$0\le \gamma_1,\gamma_2,\gamma_3<1$ with $\gamma_1+\gamma_2+\gamma_3=1$ such that
\begin{equation}\label{eq:new-cond:1a}
\frac{1}{p_1}<\frac{1+\gamma_1}{2},
\qquad
\frac{1}{p_2}<\frac{1+\gamma_2}{2},
\qquad
\frac{1}{p}>\frac{1-\gamma_3}{2},
\end{equation}
then $BH: L^{p_1}(w_1)\times L^{p_2}(w_2)\to L^p(w)$ for every $\vec{w}=(w_1,w_2)\in A_{\vec{p},\vec{r}}$ with $\vec{r}=(\frac2{1+\gamma_1},\frac2{1+\gamma_2},\frac2{1+\gamma_3})$ and where  $w=w_1^{\frac{p}{p_1}}w_2^{\frac{p}{p_2}}$. In the vector-valued case, if we further take 
$\vec{s}=(s_1,s_2)$  and let $\frac1s:=\frac1{s_1}+\frac1{s_2}$, the assumption
\begin{equation}\label{eq:new-cond-vv:1}
\max\left\{\frac{1}{s_1},\frac{1}{p_1}\right\}<\frac{1+\gamma_1}{2},
\ \
\max\left\{\frac{1}{s_2},\frac{1}{p_2}\right\}<\frac{1+\gamma_2}{2},
\ \
\min\left\{\frac{1}{s},\frac{1}{p}\right\}>\frac{1-\gamma_3}{2},
\end{equation}
yields as well $BH: L^{p_1}_{\ell^{s_1}}(w_1)\times L^{p_2}_{\ell^{s_2}}(w_2)\to L_{\ell^s}^p(w)$. These should be compared with \cite[Section 5]{CM-extrapol} where some extra restrictions on the exponent are present due to the fact that the extrapolation there is done with product weights. Moreover, we extend and reprove some results obtain by Benea and Muscalu in \cite[Section 6.3]{BM3} (see also \cite[Section 1.4]{BM4}). For instance, the previous estimates for $p<1$ reprove \cite[Proposition 19]{BM3}, and also extend \cite[Corollary 21]{BM3},
which gives vector-valued weighted norm inequalities with product weights rather than with the more general class $A_{\vec{p},\vec{r}}$.  Note that in contrast with the helicoidal method \cite{BM1, BM2, BM3, BM4}, we do not consider the cases where some $p_i$'s or $s_i$'s are infinity. These will be treated in the forthcoming paper \cite{LMO-end}.

One can put easy examples of weights for which the previous estimates hold. For instance,  given $\vec{p}=(p_1,p_2)$ with $1<p_1, p_2<\infty$ such that $\frac1p:=\frac1{p_1}+\frac1{p_2}<\frac32$,  
\begin{equation}
BH : L^{p_1}(|x|^{-a}) \times L^{p_2}(|x|^{-a}) 
\longrightarrow L^p(|x|^{-a}),
\label{eq:BH-power}
\end{equation}
if $a=0$ or if 
\begin{multline*}\label{eq:values-a}
1-\min\left\{\max\left\{1,\frac{p_1}{2}\right\},\max\left\{1,\frac{p_2}{2}\right\}\right\}
< 
a
\\
<1-p\left(\max\left\{0, \frac1{p_1}-\frac12\right\}+\max\left\{0, \frac1{p_2}-\frac12\right\}\right).
\end{multline*}
As a result, \eqref{eq:BH-power} holds for all $0\le a<\frac12$. This extends \cite[Corollary 1.23]{CM-extrapol}. To prove this, one easily sees that $\vec{w}=(|x|^{-a},|x|^{-a})\in A_{\vec{q},\vec{r}}$ with $\vec{r}\prec{q}$ if and only if
\[
1-\min\left\{\frac{q_1}{r_1}, \frac{q_2}{r_2}\right\} 
< 
a
<1-\frac{q}{r_3'}.
\]
Using this and choosing (roughly) $\gamma_1=\max\{0,\frac2{p_1}-1\}$, $\gamma_2=\max\{0,\frac2{p_1}-1\}$, $\gamma_3=1-\gamma_1-\gamma_2$ one can obtain the desired estimate for $BH$. Analogously, given  $\vec{p}=(p_1,p_2)$, $\vec{s}=(s_1,s_2)$ with $1<p_1, p_2, s_1,s_2<\infty$ such that $\frac1p:=\frac1{p_1}+\frac1{p_2}<\frac32$ and $\frac1s:=\frac1{s_1}+\frac1{s_2}<\frac32$
we have that
\begin{multline} \label{eqn:bht-vector1:pwerw-wts}
\bigg\| \bigg(\sum_k |BH(f_k,g_k)|^s\bigg)^{\frac{1}{s}}
\bigg\|_{L^p(|x|^{-a})} \\
\leq 
C\bigg\| \bigg(\sum_k |f_k|^{s_1}\bigg)^{\frac{1}{s_1}}
\bigg\|_{L^{p_1}(|x|^{-a} )} 
\bigg\| \bigg(\sum_k |g_k|^{s_2}\bigg)^{\frac{1}{s_2}}
\bigg\|_{L^{p_2}(|x|^{-a})}. 
\end{multline}
holds if 
\begin{multline*}
1-\min \left\{ 
\max\left\{1,\frac{p_1}2, \frac{p_1}{s_1}\right\},
\max\left\{1,\frac{p_2}2, \frac{p_2}{s_2}\right\}
\right\}
<
a
\\
<1-p\left(\max\left\{0, \frac1{p_1}-\frac12,\frac1{s_1}-\frac12\right\}+\max\left\{0, \frac1{p_2}-\frac12,\frac1{s_2}-\frac12\right\}\right).
\end{multline*}
Notice that this interval could be empty since even in the unweighted situation there are some natural restrictions for the vector valued inequalities to hold.

\begin{Remark} \label{remark:iteration:BHT}
	In \cite{BM1, BM2, BM3, BM4} the authors also prove iterated vector-valued inequalities such as $BH:L^{p_1}_{\ell^{s_1}_{\ell^{t_1}}}\times L^{p_2}_{\ell^{s_2}_{\ell^{t_2}}}\to L^{p}_{\ell^{s}_{\ell^{t}}}$, again with restrictions on the possible values of the $p_i$ depending
	on the $s_i$ and $t_i$.  Our method gives as a corollary these inequalities and their corresponding weighted versions (when all the exponents are finite) by extrapolation (see Remark \ref{remark:iteration}). The precise statements are left to the interested reader.
\end{Remark}

\subsection{Commutators with BMO functions}\label{section:comm}

Our extrapolation result also gives estimates for commutators with BMO functions. Recently, \cite[Theorem 4.13]{BMMST}  showed that if a multilinear operator $T$ maps continuously $L^{p_1}(w_1)\times\dots\times L^{p_m}(w_m)$ into $L^p(w)$ for some $1<p_1,\dots p_m<\infty$ and $1<p<\infty$ with  $\frac1p:=\frac1{p_1}+\dots+\frac1{p_m}$ and for all $w=(w_1,\dots,w_m)\in A_{\vec{p}}$ where $w:=\prod_{i=1}^mw_i^{\frac{p}{p_i}}$, then the multilinear commutators with BMO functions satisfy the very same inequalities. Our extrapolation result applied to the hypotheses immediately yields that we can remove the restriction $p>1$ as all the weighted estimates are equivalent to a single one. Moreover, if we extrapolate from the conclusion we can also extend the weighted estimates to the quasi-Banach range. Here it is important to emphasize that the method extensively developed in \cite{BMMST} elaborates on the commonly used Cauchy integral trick which in turn uses Minkowski's inequality, hence it requires to work in the Banach range. Nonetheless, our extrapolation result gives a posteriori that such restriction can be removed. 

Let us thus begin by defining the main objects that we will be dealing with in this setting. We recall here the definition of the John-Nirenberg space of functions of bounded mean oscillation. We say that a locally integrable function $b\in {\rm BMO}$ if
\[
\|b\|_{{\rm BMO}} := \sup_Q \dashint_Q |b - b_Q| \,dx < + \infty,
\]
where the supremum is taken over the collection of all cubes $Q\subset\mathbb{R}^n$ and where $b_Q=\dashint_Q b dx$. 

Let $T$ denote an $m$-linear operator from $X_1 \times \cdots \times X_m$ into $Y$, where $X_j, 1\le j\le m$, and $Y$ are some normed spaces. In our statements the $X_j$ and $Y$ will be appropriate weighted Lebesgue spaces. For $(f_1, f_2,\dots, f_m)\in X_1\times X_2\times\cdots \times X_m$ and for a measurable vector $\textbf{b}=(b_1, b_2, \dots, b_m)$, and $1\leq j\leq m$, we define, whenever it makes sense, the (first order) commutators
\[
[T, \textbf{b}]_{e_j}(f_1, f_2,\dots, f_m)=b_jT(f_1,\dots, f_j,\dots f_m)-T(f_1,\dots, b_jf_j,\dots f_m);
\]
we denoted by $e_j$ the basis element taking the value $1$ at component $j$ and $0$ in every other component, therefore expressing the fact that the commutator acts as a linear one in the $j$-th variable and leaving the rest of the entries of $(f_1, f_2,\dots, f_m)$ untouched. Then, if $k\in\mathbb N$, we define
\[
[T, \textbf{b}]_{k e_j} = [ \cdots [ [ T, \textbf{b}]_{e_j}, \textbf{b}]_{e_j} \cdots, \textbf{b}]_{e_j},
\]
where the commutator is performed $k$ times. Finally, if
$\alpha = (\alpha_1, \alpha_2,\dots, \alpha_m) \in \mathbb (\mathbb N\cup \{0\})^m$ is a multi-index, we define
\begin{align}
[T, \textbf{b}]_\alpha = [ \cdots [ [T, \textbf{b}]_{\alpha_1 e_1}, \textbf{b}]_{\alpha_2 e_2} \cdots, \textbf{b}]_{\alpha_m e_m}.
\end{align}
Informally, if the multilinear operator $T$ has a kernel representation of the form
\[
T(f_1, f_2,\dots, f_m)(x)=\int_{\mathbb R^{nm}}K(x, y_1,\dots, y_m)f_1(y_1)\cdots f_m(y_m)dy_1\dots dy_m,
\]
then $[T, \textbf{b}]_\alpha(f_1, f_2,\dots, f_m)(x)$ can be expressed in a similar way, with kernel
$$\prod_{j=1}^m (b_j(x)-b_j(y_j))^{\alpha_j}K(x, y_1,\dots, y_m).$$

Next we present our promised application for commutators in the context of the classes $A_{\vec{p},\vec{r}}$. Here it is important to emphasize that our only assumption on $T$, besides the initial weighted norm inequalities, is that $T$ is multilinear.

\begin{Theorem}\label{thm:multilinear:comm}
	Let $T$ be an $m$-linear operator and let $\vec{r}=(r_1,\dots,r_{m+1})$, with $1\le r_1,\dots,r_{m+1}<\infty$. Assume that there exists $\vec p=(p_1,\dots, p_m)$, with $1\le p_1,\dots, p_m<\infty$ and $\vec{r}\preceq\vec{p}$, such that for all $\vec{w} = (w_1, \dots, w_m)\in A_{\vec{p},\vec{r}}$, we have
	\begin{equation}
	\label{vector-weight}
	\|T(f_1, f_2,\dots, f_m)\|_{L^p (w)} \lesssim \prod_{i=1}^m \|f_i\|_{L^{p_i}\left(w_i\right)},
	\end{equation}
	where $\frac1p:=\frac1{p_1}+\dots+\frac1{p_m}$ and $w:=\prod_{i=1}^mw_i^{\frac{p}{p_i}}$.
	
	Then, 	for all exponents $\vec q=(q_1,\dots,q_m)$, with $\vec{r}\prec\vec{q}$, for all weights $\vec v=(v_1,\dots, v_m) \in A_{\vec q,\vec{r}}$, for all $\textbf{b} = (b_1, \dots, b_m) \in  {\rm BMO}^m$, and for each multi-index $\alpha$, we have
	\begin{equation}
	\label{multi-commutator-II}
	\|[T, \textbf{b}]_\alpha (f_1, f_2,\dots, f_m)\|_{L^q (v)}
	\lesssim
	\prod_{i=1}^m \|b_j\|^{\alpha_i}_{{\rm BMO}} \|f_i\|_{L^{q_i}\left(v_i\right)},
	\end{equation}
		where $\frac1q:=\frac1{q_1}+\dots+\frac1{q_m}$ and $v:=\prod_{i=1}^m v_i^{\frac{q}{q_i}}$. Moreover if  $\vec s=(s_1,\dots,s_m)$, with $\vec{r}\prec\vec{s}$,
		then
\begin{equation}\label{multi-comm:VV}
\bigg\|\Big(\sum_j |[T, \textbf{b}]_\alpha (f_1^j, f_2^j,\dots, f_m^j)|^s\Big)^\frac1s\bigg\|_{L^{q}(v)}
\lesssim
\prod_{i=1}^m \|b_j\|^{\alpha_i}_{{\rm BMO}} \bigg\|\Big(\sum_j |f_i^j|^{s_i}\Big)^\frac1{s_i}\bigg\|_{L^{q_i}(v_i)},
\end{equation}
$\frac1s:=\frac1{s_1}+\dots+\frac1{s_m}$. 		
\end{Theorem}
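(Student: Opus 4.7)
The plan is to invoke Theorem \ref{theor:extrapol-general} twice, sandwiching the Cauchy integral trick of \cite{BMMST} in between, so as to bypass the Banach-range restriction inherent to that method. First, I would apply Theorem \ref{theor:extrapol-general} to the hypothesis \eqref{vector-weight} to upgrade the estimate for $T$ to every tuple $\vec{q}$ with $\vec{r}\prec\vec{q}$ and every $\vec{v}\in A_{\vec{q},\vec{r}}$. In particular, I can pick an auxiliary tuple $\vec{q}^{\,0}$ with $\vec{r}\prec\vec{q}^{\,0}$ and $q^0>1$, which places us safely in the Banach range for the intermediate step.

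For $\textbf{b}=(b_1,\dots,b_m)\in {\rm BMO}^m$ I would introduce the entire function
\[
F(z_1,\dots,z_m)(x)=e^{\sum_i z_ib_i(x)}\,T\bigl(e^{-z_1b_1}f_1,\dots,e^{-z_mb_m}f_m\bigr)(x).
\]
Multilinearity of $T$ and pointwise differentiation give the crucial identity
\[
[T,\textbf{b}]_\alpha(f_1,\dots,f_m)=\partial_{z_1}^{\alpha_1}\cdots\partial_{z_m}^{\alpha_m}F(z)\Big|_{z=0},
\]
and the iterated Cauchy integral formula on the polytorus $\prod_i\{|z_i|=\epsilon_i\}$ writes this as an integral average of $F$. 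Taking $L^{q^0}(v)$-norms and invoking Minkowski's inequality (legitimate since $q^0\ge 1$), together with $|e^{\pm z_ib_i}|=e^{\pm\mathrm{Re}(z_i)b_i}$, I would reduce matters to controlling $\|T(e^{-z_1b_1}f_1,\dots,e^{-z_mb_m}f_m)\|_{L^{q^0}(\tilde v)}$ uniformly for $|z_i|\le\epsilon_i$, where $\tilde v=\prod_i\tilde v_i^{q^0/q_i^0}$ with the shifted components $\tilde v_i:=v_ie^{q_i^0\mathrm{Re}(z_i)b_i}$. A direct change of variables cancels the exponentials on the $f_i$-side: $\|e^{-z_ib_i}f_i\|_{L^{q_i^0}(\tilde v_i)}=\|f_i\|_{L^{q_i^0}(v_i)}$.

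The main obstacle is the openness/stability property of the multilinear Muckenhoupt class under BMO perturbations, namely: if $\vec{v}\in A_{\vec{q}^{\,0},\vec{r}}$ and $\textbf{b}\in{\rm BMO}^m$, then $\vec{\tilde v}\in A_{\vec{q}^{\,0},\vec{r}}$ with a uniformly bounded constant, provided $|\mathrm{Re}(z_i)|\le c/\|b_i\|_{{\rm BMO}}$ for a small absolute constant $c$. I would establish this by combining Lemma \ref{lemma:main}, which reduces $A_{\vec{q}^{\,0},\vec{r}}$ to iterated $A_1$-type conditions relative to a suitable doubling measure, with the classical John-Nirenberg consequence that $e^{tb}$ is an $A_\infty$-weight with uniform constants whenever $b\in{\rm BMO}$ and $|t|$ is small. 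Choosing $\epsilon_i=c/\|b_i\|_{{\rm BMO}}$ then converts the Cauchy-formula factor $\prod_i\epsilon_i^{-\alpha_i}$ into precisely the desired $\prod_i\|b_i\|_{{\rm BMO}}^{\alpha_i}$, yielding the Banach-range bound
\[
\|[T,\textbf{b}]_\alpha(f_1,\dots,f_m)\|_{L^{q^0}(v)}\lesssim \prod_{i=1}^m\|b_i\|_{{\rm BMO}}^{\alpha_i}\|f_i\|_{L^{q_i^0}(v_i)}
\]
for all $\vec{v}\in A_{\vec{q}^{\,0},\vec{r}}$. A second application of Theorem \ref{theor:extrapol-general} to the family of $(m+1)$-tuples $(|[T,\textbf{b}]_\alpha(f_1,\dots,f_m)|,|f_1|,\dots,|f_m|)$ then delivers \eqref{multi-commutator-II} for arbitrary $\vec{q}$ with $\vec{r}\prec\vec{q}$, while the vector-valued component of the same theorem automatically produces \eqref{multi-comm:VV}; the BMO factor $\prod_i\|b_i\|_{{\rm BMO}}^{\alpha_i}$ is a fixed constant from the viewpoint of the extrapolation family and is preserved throughout.
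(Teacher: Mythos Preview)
Your proposal is correct and follows essentially the same architecture as the paper: extrapolate the hypothesis on $T$ into the Banach range via Theorem \ref{theor:extrapol-general}, run the Cauchy integral trick of \cite{BMMST} there (where Minkowski is available), and extrapolate the commutator estimate back out; the vector-valued conclusion then comes for free from the extrapolation machinery.

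The one place your sketch diverges from the paper is the BMO-stability step for the shifted weights $\tilde v_i=v_i e^{q_i^0\mathrm{Re}(z_i)b_i}$. You propose to go through Lemma \ref{lemma:main}, but that characterization involves an $A_{p,r}(\widehat{w})$ condition with respect to an auxiliary doubling measure $\widehat{w}$ that itself moves under the perturbation, which makes the argument awkward. The paper instead proves a companion result, Lemma \ref{lemma:main:II}, which decomposes $A_{\vec{s},\vec{r}}$ symmetrically into ordinary Muckenhoupt conditions $v_i^{\theta_i/s_i}\in A_{\frac{1-r}{r}\theta_i}$ and $v^{\delta_{m+1}/s}\in A_{\frac{1-r}{r}\delta_{m+1}}$, all with respect to Lebesgue measure. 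Only the forward direction is needed: it supplies uniform reverse H\"older exponents, after which a direct H\"older splitting of the $A_{\vec{s},\vec{r}}$ integrals peels off the exponential factors, and the classical bound $[e^{\lambda h}]_{A_q}\le 4^{|\lambda|\|h\|_{\BMO}}$ for $|\lambda|\le\min\{1,q-1\}/\|h\|_{\BMO}$ closes the estimate. This is cleaner than routing through Lemma \ref{lemma:main}, though your approach could be made to work with additional effort.
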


The proof is postponed until Section \ref{section:proof-comm}. From this  and \cite{LOPTT} we can trivially obtained the following result which extends \cite{BMMST}:

\begin{Corollary}
Let $T$ be  an $m$-linear Calderón-Zygmund operator. Then, \eqref{multi-commutator-II} and \eqref{multi-comm:VV} hold
for all exponents $\vec q=(q_1,\dots,q_m)$,  $\vec{s}=(s_1,\dots, s_m)$,  with $1<q_i,s_i<\infty$, $1\le i\le \infty$, for all weights $\vec v \in A_{\vec q }$, for all $\textbf{b} = (b_1, \dots, b_m) \in  {\rm BMO}^m$, and for each multi-index $\alpha$. 
\end{Corollary}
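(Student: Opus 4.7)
The proof breaks into three stages, arranged so that the Cauchy integral trick of \cite{BMMST}, which intrinsically requires Minkowski's inequality, is only invoked at exponents in the Banach range, while Theorem \ref{theor:extrapol-general} is used twice to traverse the full quasi-Banach range.

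Stage one is a direct application of Theorem \ref{theor:extrapol-general} to the hypothesis \eqref{vector-weight}: $T$ automatically satisfies
\[
\|T(f_1,\dots,f_m)\|_{L^{q}(v)} \lesssim \prod_{i=1}^m \|f_i\|_{L^{q_i}(v_i)}
\]
for every $\vec{q}$ with $\vec{r}\prec\vec{q}$ and every $\vec{v}\in A_{\vec{q},\vec{r}}$. Since $\vec{r}\prec\vec{q}$ only demands $q_i>r_i$ and $q<r'_{m+1}$, we may fix a distinguished $\vec{q}^{\,0}=(q_1^0,\dots,q_m^0)$ with $\vec{r}\prec\vec{q}^{\,0}$ and $q^0\ge 1$ (for instance, take $q_i^0=N$ for any $N$ with $\max(r_1,\dots,r_m,m)<N<mr'_{m+1}$; such $N$ exists because $r'_{m+1}>1$). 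We then work at $\vec{q}^{\,0}$ in what follows.

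Stage two produces the commutator estimate at $\vec{q}^{\,0}$ via the Cauchy integral trick. For a first-order commutator $[T,\textbf{b}]_{e_j}$, introduce the analytic family
\[
T_z^{(j)}(f_1,\dots,f_m) := e^{zb_j}\,T\bigl(f_1,\dots,e^{-zb_j}f_j,\dots,f_m\bigr),\qquad z\in\mathbb{C},
\]
so that Cauchy's formula on $|z|=\varepsilon$ yields
\[
[T,\textbf{b}]_{e_j}(f_1,\dots,f_m) = \frac{1}{2\pi i}\oint_{|z|=\varepsilon} z^{-2}\,T_z^{(j)}(f_1,\dots,f_m)\,dz.
\]
Taking $L^{q^0}(v)$ norms and invoking Minkowski's inequality (which is available precisely because $q^0\ge 1$) bounds the left-hand side by $\varepsilon^{-1}\sup_{|z|=\varepsilon}\|T_z^{(j)}(f_1,\dots,f_m)\|_{L^{q^0}(v)}$. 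The weight $v\,e^{q^0\operatorname{Re}(z)b_j}$ that appears after transferring $e^{zb_j}$ into the norm equals $\prod_i \widetilde{v}_i^{\,q^0/q_i^0}$, where $\widetilde{v}_j=v_j e^{q_j^0\operatorname{Re}(z) b_j}$ and $\widetilde{v}_i=v_i$ for $i\neq j$. The main technical point, and in my view the central obstacle, is the stability lemma
\[
\vec{v}\in A_{\vec{q}^{\,0},\vec{r}},\ b\in{\rm BMO},\ |z|\,\|b\|_{\rm BMO}\ \text{small}\ \Longrightarrow\ \widetilde{\vec{v}}\in A_{\vec{q}^{\,0},\vec{r}}\ \text{with controlled constant}.
\]
This is the multilinear analogue of the classical fact that $we^{tb}\in A_p$ for small $t$ whenever $w\in A_p$ and $b\in{\rm BMO}$. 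The plan is to derive it from the structural characterization of $A_{\vec{q},\vec{r}}$ furnished by Lemma \ref{lemma:main}, which reduces the condition to iterated $A_1$-type conditions against doubling measures, so that the one-variable John-Nirenberg argument applies factor by factor. Granting this, stage one applies with weight $\widetilde{\vec{v}}$, and combining this with the identity $\|e^{-zb_j}f_j\|_{L^{q_j^0}(\widetilde{v}_j)}=\|f_j\|_{L^{q_j^0}(v_j)}$ and the optimal radius $\varepsilon\simeq\|b_j\|_{\rm BMO}^{-1}$ produces \eqref{multi-commutator-II} at $\vec{q}^{\,0}$ for first-order commutators. Higher-order commutators $[T,\textbf{b}]_\alpha$ are handled by introducing one complex parameter per entry of $\alpha$ and applying Cauchy's formula $|\alpha|$ times; the product $\prod_i\|b_i\|_{\rm BMO}^{\alpha_i}$ emerges from the repeated $\alpha_i$-fold derivatives.

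Stage three propagates the commutator estimate from the single exponent $\vec{q}^{\,0}$ to every admissible $\vec{q}$, including the full quasi-Banach range that the Cauchy trick cannot reach directly. Apply Theorem \ref{theor:extrapol-general} a second time, now to the extrapolation family
\[
\mathcal{F}_\alpha = \Big\{\bigl(|[T,\textbf{b}]_\alpha(f_1,\dots,f_m)|,\, |f_1|,\dots,|f_m|\bigr)\Big\},
\]
with the implicit constant absorbing $\prod_i\|b_i\|^{\alpha_i}_{\rm BMO}$. This instantly delivers \eqref{multi-commutator-II} for all $\vec{q}$ with $\vec{r}\prec\vec{q}$, while the vector-valued conclusion \eqref{multi-comm:VV} comes for free from the vector-valued part of the same extrapolation theorem. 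It is exactly this last step that removes the Banach-range restriction present in \cite{BMMST}.
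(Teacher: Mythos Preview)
Your three-stage architecture is exactly the one the paper uses to prove Theorem \ref{thm:multilinear:comm}, of which this Corollary is an immediate special case. In the paper, the Corollary itself is dispatched in one line: multilinear Calder\'on--Zygmund operators satisfy \eqref{vector-weight} with $\vec r=(1,\dots,1)$ by \cite{LOPTT}, and then Theorem \ref{thm:multilinear:comm} applies verbatim. What you have written is essentially a self-contained reproof of Theorem \ref{thm:multilinear:comm} specialized to $\vec r=(1,\dots,1)$, rather than an invocation of it; that is correct but more than is needed here. Your choice of $\vec q^{\,0}$ with all coordinates equal to a large $N$ is a valid alternative to the paper's choice $s_i=r_i/r$; both land in the Banach range $s>1$.

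One point worth flagging in Stage~2: you attribute the weight-stability step to Lemma~\ref{lemma:main}, but the paper actually carries this out via the cleaner Lemma~\ref{lemma:main:II}, which characterizes $A_{\vec p,\vec r}$ entirely through \emph{unweighted} $A_p$ conditions on $w^{\delta_{m+1}/p}$ and the $w_i^{-\delta_i/p_i}$. This matters because the BMO perturbation argument then reduces to the classical one-parameter fact $[e^{\lambda b}]_{A_q}\le 4^{|\lambda|\|b\|_{\BMO}}$ for small $|\lambda|$, combined with reverse H\"older on each factor, yielding a direct estimate on $[\vec w]_{A_{\vec s,\vec r}}$ via H\"older's inequality (see the proof of Proposition~\ref{prop:comm-Banach}). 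Lemma~\ref{lemma:main} would force you to verify stability in the weighted class $A_{p_m/r_m,\,\delta_{m+1}/r_m}(\widehat w)$, which is doable but needlessly indirect.
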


From Corollary \ref{corol:sparese-general} and Theorem \ref{thm:multilinear:comm} one can easily obtain the following:

\begin{Corollary}\label{corol:sparese-general:BMO}
Given $\vec{r}=(r_1,\dots,r_{m+1})$, with  $r_i\ge 1$ for $1\le i\le m+1$ and $\sum_{i=1}^{m+1}\frac 1{r_i}>1$, and an $m$-linear operator 	$T$ satisfying 	\eqref{sparse-domination}, it follows that
\eqref{multi-commutator-II} and \eqref{multi-comm:VV} hold for all exponents $\vec q=(q_1,\dots,q_m)$, $\vec s=(s_1,\dots,s_m)$, with $\vec{r}\prec\vec{q}$ and $\vec{r}\prec\vec{s}$, for all weights $\vec v=(v_1,\dots, v_m)  \in A_{\vec q,\vec{r}}$, for all $\textbf{b} = (b_1, \dots, b_m) \in  {\rm BMO}^m$, and for each multi-index $\alpha$. 
\end{Corollary}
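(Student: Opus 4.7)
The statement is really a composition of the two preceding results, so the plan is short.

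First, I would apply Corollary \ref{corol:sparese-general} to convert the sparse-form domination hypothesis \eqref{sparse-domination} into a single genuine weighted norm inequality. For that, I need to pick an admissible starting vector $\vec p=(p_1,\dots,p_m)$ with $\vec r\prec\vec p$. Such a vector exists: since $\sum_{i=1}^{m+1}1/r_i>1$, by continuity one can take each $p_i$ slightly larger than $r_i$ so that $1/p=\sum_{i=1}^m 1/p_i$ still satisfies $1/p>1-1/r_{m+1}$, i.e.\ $r_{m+1}'>p$. For any such choice, Corollary \ref{corol:sparese-general} gives
\[
\|T(f_1,\dots,f_m)\|_{L^p(w)}\lesssim \prod_{i=1}^m\|f_i\|_{L^{p_i}(w_i)}
\]
for every $\vec w=(w_1,\dots,w_m)\in A_{\vec p,\vec r}$, with $w=\prod_{i=1}^m w_i^{p/p_i}$.

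Next, I would feed this into Theorem \ref{thm:multilinear:comm}. Its hypotheses require only that $T$ be $m$-linear (which is assumed in the statement of Corollary \ref{corol:sparese-general:BMO}) and that the weighted bound above hold for some $\vec p$ with $\vec r\preceq\vec p$. Since $\vec r\prec\vec p$ implies $\vec r\preceq\vec p$, both conditions are met, so Theorem \ref{thm:multilinear:comm} applies verbatim and delivers \eqref{multi-commutator-II} for every $\vec q$ with $\vec r\prec\vec q$, every $\vec v\in A_{\vec q,\vec r}$, every $\mathbf b\in\mathrm{BMO}^m$, and every multi-index $\alpha$, as well as the vector-valued extension \eqref{multi-comm:VV} for every $\vec s$ with $\vec r\prec\vec s$.

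There is essentially no analytical obstacle: the only point that requires a brief check is the non-emptiness of the set of admissible starting exponents $\vec p$, which I would verify as above using the hypothesis $\sum_{i=1}^{m+1}1/r_i>1$. Everything else is a direct chaining of the two quoted results. The $m$-linearity of $T$ is essential (because Theorem \ref{thm:multilinear:comm} relies on it to define the commutators $[T,\mathbf b]_\alpha$), but no additional structural hypothesis on $T$ is needed beyond the sparse domination already assumed.
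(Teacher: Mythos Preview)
Your proposal is correct and matches the paper's approach exactly: the paper states that Corollary~\ref{corol:sparese-general:BMO} follows directly from Corollary~\ref{corol:sparese-general} and Theorem~\ref{thm:multilinear:comm}, which is precisely the chaining you describe. The only minor difference is that the paper (in the discussion leading to \eqref{staring-form}) exhibits the explicit starting exponent $p_i=r_i/r$ rather than arguing by continuity, but either choice works.
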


Our last application of Theorem \ref{thm:multilinear:comm}, with the help of Corollary \ref{corol:BH-1st}, solves a problem about the boundedness of the commutators of the bilinear
Hilbert transform with functions in BMO which as far as we know can not be obtained using other methods.

\begin{Corollary}\label{corol:BH-1st:BMO}
Assume that $\vec{r}=(r_1,r_2,r_3)$, $1<r_1,r_2,r_3<\infty$, verifies \eqref{cond-adm:corol}. 
For all exponents $\vec{p}=(p_1,p_2)$, $\vec{s}=(s_1,s_2)$  with $\vec{r}\prec\vec{p}$ and $\vec{r}\prec\vec{s}$ where $\frac1p=\frac1{p_1}+\frac1{p_2}$ and $\frac1s=\frac1{s_1}+\frac1{s_2}$, for all weights $\vec{w}= (w_1, w_2)\in A_{\vec{p},\vec{r}}$,  for all $\textbf{b} = (b_1, b_2) \in  {\rm BMO}^2$, and for each multi-index $\alpha=(\alpha_1,\alpha_2)$ it follows that
\[
\|[BH, \textbf{b}]_\alpha (f,g)\|_{L^p (w)}
\lesssim
\|b_1\|^{\alpha_1}_{{\rm BMO}} \|b_2\|^{\alpha_2}_{{\rm BMO}} \|f\|_{L^{p_1}\left(w_1\right)}\|g\|_{L^{p_2}\left(w_2\right)},
\]
and
\begin{multline*}
\bigg\|\Big(\sum_j |[BH, \textbf{b}]_\alpha (f_j, g_j)|^s\Big)^\frac1s\bigg\|_{L^{p}(w)}
\\
\lesssim
\|b_1\|^{\alpha_1}_{{\rm BMO}} \|b_2\|^{\alpha_2}_{{\rm BMO}} \bigg\|\Big(\sum_j |f_j|^{s_1}\Big)^\frac1{s_1}\bigg\|_{L^{p_1}(w_1)}
\bigg\|\Big(\sum_j |g_j|^{s_2}\Big)^\frac1{s_2}\bigg\|_{L^{p_2}(w_2)},
\end{multline*}
where $w:=w_1^{\frac{p}{p_1}}w_1^{\frac{p}{p_2}}$. 
\end{Corollary}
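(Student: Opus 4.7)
The proof of Corollary \ref{corol:BH-1st:BMO} amounts to combining Theorem \ref{thm:multilinear:comm}, which transfers weighted estimates of an $m$-linear operator to its commutators with BMO functions, with Corollary \ref{corol:BH-1st}, which provides the required weighted estimates for $BH$ under the admissibility condition \eqref{cond-adm:corol}. The key observation is that $BH$ is bilinear, so it fits into Theorem \ref{thm:multilinear:comm} with $m=2$, and no structural hypothesis on $BH$ beyond a single initial weighted bound is needed.

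First I would fix any $\vec{p}^{\,0} = (p_1^0, p_2^0)$ with $\vec{r} \prec \vec{p}^{\,0}$ (for instance the very $\vec{p}$ in the statement of the corollary), and set $\frac{1}{p^0} = \frac{1}{p_1^0} + \frac{1}{p_2^0}$. By Corollary \ref{corol:BH-1st}, and because the admissibility condition \eqref{cond-adm:corol} is assumed, one has $BH \colon L^{p_1^0}(w_1^0) \times L^{p_2^0}(w_2^0) \to L^{p^0}(w^0)$ for every $\vec{w}^{\,0} = (w_1^0, w_2^0) \in A_{\vec{p}^{\,0},\vec{r}}$, with $w^0 = (w_1^0)^{p^0/p_1^0}(w_2^0)^{p^0/p_2^0}$. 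Since $\vec{r} \prec \vec{p}^{\,0}$ implies in particular $\vec{r} \preceq \vec{p}^{\,0}$, this is precisely the hypothesis \eqref{vector-weight} of Theorem \ref{thm:multilinear:comm} applied to $T = BH$.

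Second I would invoke Theorem \ref{thm:multilinear:comm}: its conclusions \eqref{multi-commutator-II} and \eqref{multi-comm:VV} for $T = BH$, applied with $\vec{q} = \vec{p}$ and the weight $\vec{w}$, $\mathbf{b}$, $\alpha$, $\vec{s}$ of the statement (and using that $\vec{r} \prec \vec{p}$ and $\vec{r} \prec \vec{s}$ by assumption), yield respectively the scalar weighted commutator estimate and the vector-valued extension claimed in the corollary.

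There is no genuine new obstacle within this corollary: the formal deduction is a one-line combination of two previously established results. The substantive point, which should be highlighted, is that the extrapolation machinery internal to Theorem \ref{thm:multilinear:comm} allows the conclusion to reach the quasi-Banach range $p \le 1$, whereas the Cauchy integral trick used in \cite{BMMST} relies on Minkowski's inequality and therefore forces $p > 1$. Consequently, the corollary produces genuinely new commutator estimates for the bilinear Hilbert transform below the Banach range, inaccessible by existing techniques.
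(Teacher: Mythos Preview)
Your proposal is correct and matches the paper's approach exactly: the paper presents Corollary \ref{corol:BH-1st:BMO} as an immediate application of Theorem \ref{thm:multilinear:comm} with the help of Corollary \ref{corol:BH-1st}, and does not supply any further argument. Your write-up simply makes explicit the verification of the hypotheses, including the remark about the quasi-Banach range, which is also emphasized in the paper.
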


 Note that all the previous estimates admit iterated weighted vector-valued extensions along the lines pointed out in Remark \ref{remark:iteration}. We leave the details to the interested reader.

\section{Auxiliary results}\label{section:aux}

We first introduce some notation. Given a  cube $Q$, its side-length will be denoted by $\ell(Q)$ and for  any $\lambda>0$ we let  $\lambda Q$ be
the cube concentric with $Q$ whose side-length is $\lambda \ell
(Q)$. Let $\mu$ be a doubling measure on $\re^n$, that is, $\mu$ is a non-negative Borel regular measure such that $\mu(2Q)\le C_\mu \mu(Q)<\infty$ for every cube $Q\subset \re^n$. Given a Borel set $E\subset\re^n$ with $0<\mu(E)<\infty$ we use the notation
\[
\dashint_E f d\mu=\frac1{\mu(E)}\int_E fd\mu.
\]

Next we give the basic properties of weights that we will
need below.  For proofs and further information,
see~\cite{duoandikoetxea01,grafakos08b}.   By a weight we mean a
measurable  function $v$ such that
$0<v<\infty$ $\mu$-a.e.   For $1<p<\infty$, we say that $v\in
A_p(\mu)$ if 
\[ 
[v]_{A_p(\mu)} = \sup_Q \dashint_Q v\,d\mu 
\left(\dashint_Q v^{1-p'}\,d\mu\right)^{p-1} < \infty, \]
where the supremum is taken over all cubes $Q\subset \re^n$. The quantity $[v]_{A_p(\mu)}$ is called
the $A_p(\mu)$ constant of $v$.  Notice that it follows at once from this
definition that if $v\in A_p(\mu)$, then $v^{1-p'}\in A_{p'}(\mu)$.  When $p=1$
we say that $v\in A_1(\mu)$ if
\[  [v]_{A_1(\mu)} = \sup_Q \dashint_Q v\,d\mu \esssup_{Q} v^{-1}<
\infty, \]
where the essential supremum is taken with respect to the underlying doubling measure $\mu$. 
The $A_p(\mu)$ classes are properly nested:  for $1<p<q$, $A_1(\mu)\subsetneq
A_p(\mu) \subsetneq A_q(\mu)$.  
We denote the union of all the $A_p(\mu)$ classes, $1\leq p<\infty$, by
$A_\infty(\mu)$.  

Given $1\le p<\infty$ and $0<r<\infty$ we say that $v\in A_{p,r}(\mu)$ if 
\[ 
[v]_{A_{p,r}(\mu)}
=
\sup_Q  \dashint_Q v^{r}\,d\mu
\left( \dashint_Q v^{-p'}\,dx\right)^{\frac{r}{p'}}<\infty, \]
when $p>1$ and  
\[ 
[v]_{A_{p,r}(\mu)}
=
\sup_Q  \dashint_Q v^{r}\,d\mu
\esssup_{Q} v^{-r}<\infty
\]
when $p=1$. Notice that clearly $v\in A_{p,r}(\mu)$ if and only if $v^r\in A_{1+\frac{r}{p'}}(\mu)$ with $[v]_{A_{p,r}(\mu)}=[v^r]_{A_{1+\frac{r}{p'}}(\mu)}$.

When $\mu$ is the Lebesgue measure we will simply write $A_p$, $A_{p,r}$, \dots. It is well-known that if $w\in A_\infty$ then $dw=w(x)dx$ is a doubling measure. Besides, since $0<w<\infty$ a.e. then the Lebesgue measure and  $w$ have the same null measure sets hence the essential suprema and infima with respect to the Lebesgue measure and $w$ agree.

To prove our main result we need some off-diagonal extrapolation theorem proved by Duoandikoetxea in \cite{Duo} for the Lebesgue measure but whose proof readily extends to any underlying doubling measure. 
\begin{Theorem}[{\cite[Theorem 5.1]{Duo}}]\label{theor:duo}
Let $\mu$ be a doubling measure on $\re^n$, $n\ge 1$, and let $\mathcal{F}$ be a family of pairs $(F,G)$ of non-negative Borel functions. 
Let $1\le p_0<\infty$ and $0<q_0,r_0<\infty$ and assume that for all $w\in A_{p_0, r_0}(\mu)$ and for all $(F,G)\in\mathcal{F}$ we have the inequality
\[
\Big( \int_{\mathbb R^n} F^{q_0}w^{q_0} \d\mu\Big)^{\frac 1{q_0}}
\le
N([w]_{A_{p_0,r_0}})
\Big( \int_{\mathbb R^n} G^{p_0}w^{p_0} \d\mu\Big)^{\frac1{p_0}},
\]
where $N$ is an increasing function. Then there exists another increasing function $\widetilde{N}$ such that for all $1< p<\infty$ and $0<q,r<\infty$ verifying
\[
\frac 1q-\frac 1{q_0}=\frac 1r-\frac 1{r_0}=\frac 1p-\frac 1{p_0},
\]
for all $w\in A_{p,r}(\mu)$, and for all $(F,G)\in\mathcal{F}$ we have
\[
\Big( \int_{\mathbb R^n} F^{q}w^{q} \d\mu\Big)^{\frac 1{q}}
\le
\widetilde{N}([w]_{A_{p,r}})
\Big( \int_{\mathbb R^n} G^{p}w^{p} \d\mu\Big)^{\frac 1{p}}.
	\]
\end{Theorem}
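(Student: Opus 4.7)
The plan is to adapt the classical Rubio de Francia extrapolation algorithm to the doubling-measure setting. The first step is to reduce, via duality in $L^{q/q_0}(\mu)$ when $q>q_0$ (the opposite case being handled symmetrically by dualizing on the $G$-side with $L^{p_0/p}(\mu)$), the target estimate to controlling a linearized quantity of the form
\[
\int_{\re^n} F^{q_0} w^{q_0} h \, d\mu,
\]
where $h\ge 0$ and $\|h\|_{L^{(q/q_0)'}(\mu)}\le 1$. This converts a $L^q$-norm into a bilinear pairing, which is what the extrapolation algorithm can feed into.

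The second step is to apply Rubio de Francia iteration with respect to the Hardy--Littlewood maximal operator $M_\mu$ associated with the doubling measure $\mu$. Since $M_\mu$ is bounded on every $L^s(\mu)$ with $s>1$ (thanks to doubling), the series
\[
\mathcal{R}h := \sum_{k\ge 0}\frac{M_\mu^k h}{\bigl(2\|M_\mu\|_{L^{(q/q_0)'}(\mu)\to L^{(q/q_0)'}(\mu)}\bigr)^k}
\]
yields a weight with $h\le \mathcal{R}h$ pointwise, $\|\mathcal{R}h\|_{L^{(q/q_0)'}(\mu)}\le 2$, and $\mathcal{R}h\in A_1(\mu)$ with uniform constant. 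A parallel dual iteration (applied to an appropriate test function coming from the $G$-side) produces a second $A_1(\mu)$-weight. Multiplying $w$ by suitable powers of these two $A_1(\mu)$-weights and of $w$ itself yields a candidate weight $W$ whose $L^{p_0}(\mu)$-norm against $G$ we intend to control via the hypothesis.

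The heart of the proof is to verify that this constructed $W$ lies in $A_{p_0,r_0}(\mu)$. Here I would invoke the Jones factorization theorem for $A_q(\mu)$-classes, which holds for every doubling measure and gives the equivalence $A_q(\mu) = A_1(\mu)\cdot A_1(\mu)^{1-q}$. Since $w\in A_{p,r}(\mu)$ means $w^r\in A_{1+r/p'}(\mu)$, Jones factorization lets us write $w^r = u_1 u_2^{-r/p'}$ with $u_1,u_2\in A_1(\mu)$; combining this factorization with the two $A_1(\mu)$-weights produced by the Rubio de Francia algorithms, and choosing the iteration exponents in accordance with the three-way scaling $\frac{1}{q}-\frac{1}{q_0} = \frac{1}{r}-\frac{1}{r_0} = \frac{1}{p}-\frac{1}{p_0}$, one produces a factorization of $W^{r_0}$ of the form $v_1 v_2^{-r_0/p_0'}$ with $v_1,v_2\in A_1(\mu)$, which is precisely $A_{1+r_0/p_0'}(\mu) = A_{p_0,r_0}(\mu)$. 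Applying the hypothesis at $(p_0,q_0,r_0)$ to $W$, using $h\le \mathcal{R}h$ to dominate the linearized quantity by $\int F^{q_0} W^{q_0}\, d\mu$, and then closing via H\"older's inequality produces the desired $L^q$-bound.

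The main obstacle is the exponent bookkeeping in the third step: the definition of $W$, the construction of its Jones factorization, and the final H\"older closure all rely on the three-way scaling identity being exactly what it is. It is precisely this identity that forces every power of every $A_1(\mu)$-factor and every H\"older exponent to match; any deviation breaks the whole scheme. Once the algebra is set up correctly, the argument is essentially a mechanical adaptation of the Lebesgue-measure case, with the only substantive input beyond scalar bookkeeping being the $L^s(\mu)$-boundedness of $M_\mu$ and Jones factorization for $A_q(\mu)$, both of which are standard consequences of the doubling hypothesis on $\mu$.
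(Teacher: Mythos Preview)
The paper does not prove this theorem. It is stated as an auxiliary result with an explicit citation to \cite[Theorem~5.1]{Duo}, and the only comment the paper makes is that Duoandikoetxea's proof ``for the Lebesgue measure \dots\ readily extends to any underlying doubling measure.'' So there is no proof in the paper to compare against; the paper simply imports the result.

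That said, your sketch is in the right family of arguments (Rubio de Francia iteration with $M_\mu$, then apply the hypothesis at $(p_0,q_0,r_0)$ to a modified weight), but it is more elaborate than what Duoandikoetxea actually does. In \cite{Duo} the off-diagonal case is handled with a \emph{single} Rubio de Francia iteration and a direct construction of the auxiliary weight $W$ as a product of a power of $w$ and a power of $\mathcal{R}h$; the membership $W\in A_{p_0,r_0}(\mu)$ is then checked by hand using only $w\in A_{p,r}(\mu)$ and $\mathcal{R}h\in A_1(\mu)$. There is no need for Jones factorization and no ``parallel dual iteration'' producing a second $A_1$ weight. Your scheme could in principle be pushed through, but the bookkeeping you flag as the ``main obstacle'' becomes substantially heavier than necessary, and the Jones-factorization detour obscures the exponent matching rather than clarifying it. If you want to reconstruct the actual argument, drop the factorization step and instead look for exponents $\alpha,\beta$ so that $W=w^\alpha(\mathcal{R}h)^\beta$ directly satisfies $W^{r_0}\in A_{1+r_0/p_0'}(\mu)$; the three-way scaling identity is exactly what makes such $\alpha,\beta$ exist.
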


\medskip

In preparation for proving our main result we need some notation.
Given  $\vec p=(p_1,\dots, p_m)$ with $1\le p_1,\dots,p_m<\infty$ and $\vec{r}=(r_1,\dots,r_{m+1})$
with $1\le r_1,\dots,r_{m+1}<\infty$ so that $\vec{r}\preceq\vec{p}$ we set 
\[
\frac1r:=\sum_{i=1}^{m+1} \frac1{r_i}, 
\qquad
\frac1{p_{m+1}}:=1-\frac1p,
\qquad\mbox{and}\qquad
\frac1{\delta_i}=\frac1{r_i}-\frac1{p_i},
\quad i=1,\dots, m+1.
\]
Notice that as observed above we have that $0<r<1$ and formally $\frac1{p_{m+1}}=\frac1{p'}$ which  could be negative or zero if $p\le 1$. Note that in this way
\[
\sum_{i=1}^{m+1}\frac1{p_i}=1
\qquad\mbox{and}\qquad
\sum_{i=1}^{m+1}\frac1{\delta_i}=\frac1{r}-1=\frac{1-r}{r}.
\]
Also, $\vec{r}\preceq\vec{p}$ means that $r_i\le p_i$, hence $\delta_i^{-1}\ge 0$, for every $1\le i\le m$ and
$r_{m+1}<p_{m+1}$, that is, $\delta_{m+1}^{-1}>0$. On the other hand, $\vec{r}\prec\vec{p}$ means that $r_i< p_i$ or $\delta_i^{-1}>0$ for every $1\le i\le m+1$.
Notice that with this notation $w\in A_{\vec{p},\vec{r}}$  can be written as 
\[
[\vec{w}]_{A_{\vec p, \vec r}}
=
\sup_Q\Big(\dashint_Q w^{\frac {\delta_{m+1}}{p}}\d x\Big)^{\frac 1{\delta_{m+1}}}
\prod_{i=1}^m \Big(\dashint_Q w_i^{-\frac{\delta_i}{p_i}}\d x\Big)^{\frac1{\delta_i}}
<\infty,
\]
and, when $p_i=r_i$ (i.e., $\delta_i^{-1}=0$), we need to replace the corresponding term with $\esssup_Q w_i^{-\frac 1{p_i}}$.

\bigskip

The following lemma gives a new characterization of the weighted class $A_{\vec p, \vec r}$, which is of independent interest. Moreover, as we will see later, it will allow us 
to prove our multivariable extrapolation result from a one-variable extrapolation result with some underlying measure depending on the weights.

\begin{Lemma}\label{lemma:main}
Let  $\vec p=(p_1,\dots, p_m)$ with $1\le p_1,\dots,p_m<\infty$ and $\vec{r}=(r_1,\dots,r_{m+1})$
with $1\le r_1,\dots,r_{m+1}<\infty$ be such that $\vec{r}\preceq\vec{p}$. Using the previous notation we set
\begin{equation}\label{def:varrho}
\frac1{\varrho}:=\frac 1{r_m}-\frac 1{r_{m+1}'}+\sum_{i=1}^{m-1}\frac1{p_i}
=
\frac1{\delta_m}+\frac1{\delta_{m+1}}
>0,
\end{equation}
 and for every $1\le i\le m-1$
\begin{equation}\label{def:theta}
\frac1{\theta_i}
:= 
\frac{1-r}r-\frac1{\delta_i} 
=
\left(\sum_{j=1}^{m+1} \frac1{\delta_j}\right)-\frac1{\delta_i}
>0.
\end{equation}
Then the following hold:
\begin{list}{$(\theenumi)$}{\usecounter{enumi}\leftmargin=.8cm
	\labelwidth=.8cm\itemsep=0.2cm\topsep=.1cm
	\renewcommand{\theenumi}{\roman{enumi}}}

\item Given  $\vec{w}=(w_1,\dots,w_m)\in A_{\vec p, \vec r}$, write $w:=\prod_{i=1}^mw_i^{\frac{p}{p_i}}$ and
set
\begin{equation}\label{formula-1}
\widehat{w}:=\Big(\prod_{i=1}^{m-1}w_i^{\frac1{p_i}}\Big)^{\varrho}
\qquad
\mbox{and}
\qquad
W
:=
w^{\frac{r_m}{p}} \widehat{w}^{- \frac{r_m}{\delta_{m+1}}}
=
w_m^{\frac{r_m}{p_m}}\widehat{w}^{\frac{r_m}{\delta_m}}
\end{equation}
Then, 
\begin{list}{$(\theenumi.\theenumii)$}{\usecounter{enumii}\leftmargin=.4cm
		\labelwidth=.8cm\itemsep=0.2cm\topsep=.1cm
		\renewcommand{\theenumii}{\arabic{enumii}}}

\item $w_i^{\frac{\theta_i }{p_i}}\in A_{\frac{1-r}{r}\theta_i }$ with $\Big[w_i^{\frac {\theta_i }{p_i}}\Big]_{ A_{\frac{1-r}{r}\theta_i }} \le [\vec w]_{A_{\vec p, \vec r}}^{\theta_i }$, for every $1\le i\le m-1$.

\item $\widehat{w}\in A_{\frac{1-r}{r}\varrho}$ with 
$[\widehat{w}]_{A_{\frac{1-r}{r}\varrho}}\le [\vec w]_{A_{\vec p, \vec r}}^{\varrho}$.

\item $W\in A_{\frac{p_m}{r_m},\frac{\delta_{m+1}}{r_m}}(\widehat{w})$ 
with $[W]_{A_{\frac{p_m}{r_m},\frac{\delta_{m+1}}{r_m}} (\widehat{w})}
\le
[\vec w]_{A_{\vec p, \vec r}}^{\delta_{m+1}}$.
\end{list}

\item Given $w_i^{\frac {\theta_i}{p_i}}\in A_{\frac{1-r}{r}\theta_i}$,  $1\le i\le m-1$, 
such  that 
\begin{equation}\label{formula2-1}
\widehat{w}=
\Big(\prod_{i=1}^{m-1}w_i^{\frac1{p_i}}\Big)^{\varrho}
\in A_{\frac{1-r}{r}\varrho}
\end{equation}
and  
$W\in A_{\frac{p_m}{r_m},\frac{\delta_{m+1}}{r_m}} (\widehat{w})$, let us set 
\begin{equation}\label{formula-2}
w_m
:=
W^{\frac{p_m}{r_m}} \widehat{w}^{\frac{p_m}{\delta_m}}.
\end{equation}
Then $\vec{w}=(w_1,\dots, w_m)\in A_{\vec p, \vec r}$ and, moreover,
\[
[\vec w]_{A_{\vec p, \vec r}}
\le 
[W]_{A_{\frac{p_m}{r_m},\frac{\delta_{m+1}}{r_m}} (\widehat{w})}^{\frac1{\delta_{m+1}}}
[\widehat{w}]_{A_{\frac{1-r}{r}\varrho}}^{\frac1 {\varrho}}
\prod_{i=1}^{m-1}\Big[w_i^{\frac {\theta_i}{ p_i}}\Big]_{A_{\frac{1-r}{r}\theta_i}}^{\frac1{\theta_i}}.
\]

\item For any measurable function $f\ge0 $ and in the context of $(i)$ or $(ii)$ there hold
\begin{equation}\label{LHS-rew}
\|f\|_{L^p(w)}
=
\Big\|\Big(f\widehat{w}^{-\frac1{r_{m+1}'}}\Big)^{r_m}\Big\|_{L^\frac{p}{r_m}(W^{\frac{p}{r_m}}\d\widehat{w})}^{\frac1{r_m}}
\end{equation}
and
\begin{equation}\label{RHS-rew}
\|f\|_{L^{p_m}(w_m)}
=
\Big\|\Big(f\widehat{w}^{-\frac1{r_m}}\Big)^{r_m}\Big\|_{L^\frac{p_m}{r_m}(W^{\frac{p_m}{r_m}}\d\widehat{w})}^{\frac1{r_m}}.
\end{equation}
\end{list}
\end{Lemma}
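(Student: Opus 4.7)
Two algebraic identities, consequences of the definitions of $W$ and $\widehat w$ and the relation $1/\varrho = 1/\delta_m + 1/\delta_{m+1}$, do the heavy lifting: $w^{\delta_{m+1}/p} = \widehat w\cdot W^{\delta_{m+1}/r_m}$ and $w_m^{-\delta_m/p_m} = \widehat w\cdot W^{-\delta_m/r_m}$ (equivalently $w = \widehat w^{p/\delta_{m+1}}W^{p/r_m}$ and $w_m = \widehat w^{-p_m/\delta_m}W^{p_m/r_m}$). Part $(iii)$ is a direct substitution of these identities followed by the change of measure $\d x \leftrightarrow \widehat w^{-1}\d\widehat w$. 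Averaging the same identities over a cube $Q$ and converting the $w$- and $w_m$-averages to $\widehat w$-averages of $W$ yields the master formula
\[
[\vec w]_{A_{\vec p,\vec r}}(Q) = \Big(\frac{\widehat w(Q)}{|Q|}\Big)^{1/\varrho}[W]_{A_{\frac{p_m}{r_m},\frac{\delta_{m+1}}{r_m}}(\widehat w)}(Q)^{1/\delta_{m+1}}\prod_{i=1}^{m-1}\Big(\dashint_Q w_i^{-\delta_i/p_i}\,\d x\Big)^{1/\delta_i},
\]
around which the rest of the proof is organized.

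For Part $(i)$, claim $(i.1)$ follows from the identity $w_i^{\theta_i/p_i} = w^{\theta_i/p}\prod_{k\ne i,\,1\le k\le m}w_k^{-\theta_i/p_k}$ together with H\"older's inequality applied with exponents $\delta_j/\theta_i$, whose reciprocals sum to $\theta_i\sum_{j\ne i}1/\delta_j = 1$ by the very definition of $\theta_i$. Claim $(i.2)$ is analogous: the identity $\widehat w = w^{\varrho/p}w_m^{-\varrho/p_m}$ and H\"older with exponents $\delta_{m+1}/\varrho,\delta_m/\varrho$ bound $\dashint_Q\widehat w\,\d x$, while $\widehat w^{-1/(q-1)} = \prod_{i<m}w_i^{-1/(Sp_i)}$ (with $S = \sum_{i<m}1/\delta_i$) is handled by a second H\"older with exponents $S\delta_i$; in both cases the reciprocals sum to $1$ automatically from the definitions of $\varrho$ and $q$. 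For claim $(i.3)$, the master formula reduces the bound $[W]_{A_{\frac{p_m}{r_m},\frac{\delta_{m+1}}{r_m}}(\widehat w)}\le[\vec w]_{A_{\vec p,\vec r}}^{\delta_{m+1}}$ to the pointwise lower estimate
\[
\Big(\dashint_Q\widehat w\,\d x\Big)^{1/\varrho}\prod_{i=1}^{m-1}\Big(\dashint_Q w_i^{-\delta_i/p_i}\,\d x\Big)^{1/\delta_i}\ge 1.
\]

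For Part $(ii)$, the master formula is read backwards: $[W](Q)^{1/\delta_{m+1}}$ is bounded by its sup $[W]_{A_{\frac{p_m}{r_m},\frac{\delta_{m+1}}{r_m}}(\widehat w)}^{1/\delta_{m+1}}$, and the factor $(\widehat w(Q)/|Q|)^{1/\varrho}\prod_{i<m}(\dashint_Q w_i^{-\delta_i/p_i}\,\d x)^{1/\delta_i}$ is bounded via the defining expressions of the $A_{(1-r)\varrho/r}$ and $A_{(1-r)\theta_i/r}$ constants of $\widehat w$ and $w_i^{\theta_i/p_i}$. The one piece left to verify is a second positivity inequality
\[
\Big(\dashint_Q\widehat w^{-1/(q-1)}\,\d x\Big)^{(q-1)/\varrho}\prod_{i=1}^{m-1}\Big(\dashint_Q w_i^{\theta_i/p_i}\,\d x\Big)^{1/\theta_i}\ge 1, \qquad q = \tfrac{1-r}{r}\varrho.
\]
The main obstacle is these two displayed inequalities; both are universal in $w_i$ (they make no use of $\vec w\in A_{\vec p,\vec r}$) and both yield to the integral form of Jensen's inequality for the concave function $\log$, namely $\dashint_Q f\,\d x \ge \exp\big(\dashint_Q \log f\,\d x\big)$ for $f>0$. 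Setting $u_i := w_i^{1/p_i}$ and noting $\widehat w = \prod_{i<m}u_i^\varrho$, each factor in either inequality is bounded below by the exponential of a linear combination of the averages $\dashint_Q\log u_i\,\d x$; the relations $1/\varrho = 1/\delta_m + 1/\delta_{m+1}$, $1/\theta_i = (1-r)/r - 1/\delta_i$, and $q-1 = \varrho\sum_{i<m}1/\delta_i$ force the total coefficient of each such average to vanish, so the product on the left is $\ge \exp(0) = 1$. This exact cancellation --- engineered into the exponents $\varrho$ and $\theta_i$ --- is precisely what makes the factorization of $A_{\vec p,\vec r}$ coherent.
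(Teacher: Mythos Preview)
Your approach is essentially the paper's, packaged more compactly. Your ``master formula'' is exactly the content of the paper's identities \eqref{qrfera} and \eqref{aaa:1}; your two positivity inequalities via Jensen are equivalent to the paper's H\"older computations (the paper writes $1=\dashint_Q \widehat w^{\alpha}\widehat w^{-\alpha}\,dx$ and applies H\"older, which is the same cancellation you exhibit with $\log$). Part $(iii)$ and the arguments for $(i.1)$, $(i.2)$ match the paper's directly.

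There is one genuine omission. The lemma is stated for $\vec r\preceq\vec p$, so some $\delta_i^{-1}$ may vanish (i.e.\ $p_i=r_i$), and then the corresponding factor in $[\vec w]_{A_{\vec p,\vec r}}$ is $\esssup_Q w_i^{-1/p_i}$ rather than an integral average. Your master formula and both positivity inequalities are written only for the generic case $\delta_i^{-1}>0$; in particular your second positivity inequality needs $q=\frac{1-r}{r}\varrho>1$, i.e.\ $\sum_{i<m}\delta_i^{-1}>0$, and when $\delta_m^{-1}=0$ the class $A_{p_m/r_m,\delta_{m+1}/r_m}(\widehat w)$ becomes $A_{1,\delta_{m+1}/r_m}(\widehat w)$ with an $\esssup$ in place of the second average. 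The paper handles these endpoints by an explicit three-case split (Cases~1--3 in the proofs of $(i.2)$, $(i.3)$, and $(ii)$). Your Jensen argument adapts to these cases---replace the relevant Jensen bound $\bigl(\dashint_Q u_i^{-\delta_i}\bigr)^{1/\delta_i}\ge\exp(-\dashint_Q\log u_i)$ by the trivial $\esssup_Q u_i^{-1}\ge\exp(-\dashint_Q\log u_i)$, and when $q=1$ use $\essinf_Q\widehat w$ in place of the dual average---but as written the proof is incomplete without addressing them.
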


\begin{proof}
	
We start observing that equalities \eqref{formula-1} in $(i)$, or \eqref{formula2-1} and \eqref{formula-2} in case $(ii)$, easily yield

\begin{equation}\label{qrfera}
\dashint_Q W^{\frac{\delta_{m+1}}{r_m}} \d \widehat w
=
\Big(\dashint_Q \widehat{w}\d x\Big)^{-1}\Big(\dashint_Q w^{\frac{\delta_{m+1}}{p}}
\d x\Big)
\end{equation}
and whenever $\delta_m^{-1}\neq 0$ (i.e., $r_m<p_m$)
\begin{align*}
\dashint_Q  W^{-(\frac{p_m}{r_m})'}\d \widehat w
&=
\dashint_Q  w_m^{-\frac{r_m}{p_m}(\frac {p_m}{r_m})'}\widehat{w}^{\frac{r_m}{\delta_m}(\frac {p_m}{r_m})'}
\d
\widehat w
=
\Big(\dashint_Q \widehat{w}\d x\Big)^{-1}
\Big(
\dashint_Q  w_m^{-\frac{\delta_m}{p_m}} \d x
\Big).
\end{align*}%
These equalities yield if $\delta_m^{-1}\neq0$ 
\begin{multline}\label{aaa:1}
\Big(
\dashint_Q W^{\frac{\delta_{m+1}}{r_m}}   \d \widehat w
\Big)
\Big(
\dashint_Q  W^{-(\frac{p_m}{r_m})'}\d \widehat w
\Big)^{\frac{\frac{\delta_{m+1}}{r_m}}{(\frac{p_m}{r_m})'}}
\\
=
\Big(\dashint_Q \widehat{w} \d x\Big)^{-1-\frac{\delta_{m+1}}{\delta_m}}
\Big(\dashint_Q w^{\frac{\delta_{m+1}}{p}} \d x\Big)
\Big(
\dashint_Q  w_m^{-\frac{\delta_m}{p_m}} \d x
\Big)^{\frac{\delta_{m+1}}{\delta_m}}
\end{multline}
Thus, if $\delta_m^{-1}\neq0$  and $\sum_{i=1}^{m-1}\frac1{\delta_i}>0$  then
\begin{align}\label{aaa:2}
&\Big(
\dashint_Q W^{\frac{\delta_{m+1}}{r_m}}   \d \widehat w
\Big)
\Big(
\dashint_Q  W^{-(\frac{p_m}{r_m})'}\d \widehat w
\Big)^{\frac{\frac{\delta_{m+1}}{r_m}}{(\frac{p_m}{r_m})'}}
\\
&\quad =
\bigg[
\Big(\dashint_Q \widehat{w}^{1-\big(\frac{1-r}{r}\varrho\big)'}\d x\Big)^{\frac1{\varrho}(\frac{1-r}{r}\varrho-1)}
\Big( \dashint_Q w^{\frac{\delta_{m+1}}{p}}  \d x \Big)^{\frac 1{\delta_{m+1}}}
\Big( \dashint_Q   w_m^{-\frac{\delta_m}{p_m}}\d x \Big)^{\frac1{\delta_m}}
\\
&\hskip3.5cm\times
\Big(
\dashint_Q \widehat{w}\d x
\Big)^{-\frac1{\varrho}}
\Big(\dashint_Q \widehat{w}^{1-\big(\frac{1-r}{r}\varrho\big)'}\d x\Big)^{-\frac1{\varrho}(\frac{1-r}{r}\varrho-1)}
\bigg]^{\delta_{m+1}}.
\end{align}
On the other hand, if  $\sum_{i=1}^{m-1}\frac1{\delta_i}>0$ and $\delta_m^{-1}=0$
\begin{align}\label{aaa:3}
&\Big(
\dashint_Q W^{\frac{\delta_{m+1}}{r_m}}   \d \widehat w
\Big)
=
\Big(\dashint_Q \widehat{w}\d x\Big)^{-1}\Big(\dashint_Q w^{\frac{\delta_{m+1}}{p}}
\d x\Big)
\\
&\ =
\bigg[
\Big(\dashint_Q \widehat{w}^{1-\big(\frac{1-r}{r}\varrho\big)'}\d x\Big)^{\frac1{\varrho}(\frac{1-r}{r}\varrho-1)}
\Big( \dashint_Q w^{\frac{\delta_{m+1}}{p}}  \d x \Big)^{\frac 1{\delta_{m+1}}}
\\
&\hskip3.5cm\times
\Big(
\dashint_Q \widehat{w}\d x
\Big)^{-\frac1{\varrho}}
\Big(\dashint_Q \widehat{w}^{1-\big(\frac{1-r}{r}\varrho\big)'}\d x\Big)^{-\frac1{\varrho}(\frac{1-r}{r}\varrho-1)}
\bigg]^{\delta_{m+1}}
\end{align}
 since $\varrho=\delta_{m+1}$.

\medskip

We proceed to establish $(i)$. Assume  that $\vec{w}\in A_{\vec p, \vec r}$. To see $(i.1)$ we fix $1\le i\le m-1$ and set 
$\mathcal{I}=\{j: 1\le j\le m, \delta_j^{-1}\neq 0\}$ and $\mathcal{I}'=\{1,\dots,m\}\setminus\mathcal{I}$. Set 
$\frac1{\eta_i}:=\frac{\theta_i}{\delta_{m+1}}$ and $\frac1{\eta_j}:=\frac{\theta_i}{\delta_{j}}$ for $j\in\mathcal{I}$ with $j\neq i$ so that 
$\frac1{\eta_i}+\sum_{i\neq j\in\mathcal{I}}\frac1{\eta_j}=1$. Thus, Hölder's inequality easily gives
\begin{multline*}
\dashint_Q w_i^{\frac{\theta_i}{p_i}} dx
= 
\dashint_Q 
\Big( w^{\frac{\theta_i}{p}}\prod_{\substack{1\le j \le m\\j\neq i} }w_j^{-\frac { \theta_i }{p_j}}\Big)dx
\\
\le 
\Big( \dashint_Q w^{\frac{\theta_i}{p}\eta_i}dx\Big)^{\frac{1}{\eta_i}} 
\Big(\prod_{i\neq j\in\mathcal{I}}  \Big( \dashint_Q   w_j^{-\frac {\theta_i }{p_j}\eta_j}  \Big)^{\frac{1}{\eta_j}}\Big)
\Big(\prod_{i\neq j \in\mathcal{I}'} \esssup_{Q} w_j^{-\frac { \theta_i }{p_j}} \Big)
\\
= 
\Big( \dashint_Q w^{\frac{\delta_{m+1}}{p}}dx\Big)^{\frac{\theta_i}{\delta_{m+1}}} 
\Big(\prod_{i\neq j\in\mathcal{I}} \Big( \dashint_Q   w_j^{-\frac {\delta_j}{p_j}}  \Big)^{\frac{\theta_i}{\delta_j}}\Big)
\Big(\prod_{i\neq j \in\mathcal{I}'} \esssup_{Q} w_j^{-\frac { 1}{p_j}} \Big)^{\theta_i}.
\end{multline*} 
When $p_i=r_i$ then $\theta_i=\frac{r}{1-r}$ and this inequality readily gives 
$w_i^{\frac {\theta_i}{p_i}}\in A_{1}$ with $\Big[w_i^{\frac {\theta_i }{p_i}}\Big]_{ A_1} \le [\vec w]_{A_{\vec p, \vec r}}^{\theta_i }$. When $p_i>r_i$ we just need to observe that $\frac{\theta_i}{p_i}(1-(\frac{1-r}{r}\theta_i)')=- \frac{\delta_i}{p_i}$ and again we eventually see that $w_i^{\frac {\theta_i}{p_i}}\in A_{\frac{1-r}{r}\theta_i}$ with $\Big[w_i^{\frac {\theta_i }{p_i}}\Big]_{ A_{\frac{1-r}{r}\theta_i }} \le [\vec w]_{A_{\vec p, \vec r}}^{\theta_i }$.

\medskip

To obtain $(i.2)$ we need to consider three cases. 

\medskip

\noindent\textbf{Case 1:} $\sum_{i=1}^{m}\frac1{\delta_i}=0$, that is,  $p_j=r_j$ for $1\le j\le m$.

In this case $\frac1{\varrho}=\frac1{\delta_{m+1}}=\frac{1-r}{r}$ and we can easily see that $\widehat{w}\in A_1$ with the right bound:
\begin{multline*}
\dashint_Q \widehat{w} \d x
=
\dashint_Q w^{\frac{\delta_{m+1}}{p}}w_m^{-\frac{\delta_{m+1}}{p_m}} \d x
\le
\dashint_Q w^{\frac{\delta_{m+1}}{p}}\d x \esssup_Q w_m^{-\frac{\delta_{m+1}}{p_m}} 
\\
\le
[w]_{A_{\vec{p},\vec{r}}}^{\delta_{m+1}}\,\Big(\prod_{i=1}^{m-1}\essinf_{Q} w_i^{\frac1{p_i}}\Big)^{\delta_{m+1}}
\le 
[w]_{A_{\vec{p},\vec{r}}}^{\varrho}\, \essinf_{Q}\widehat{w}.
\end{multline*}

\medskip

\noindent\textbf{Case 2:} $\sum_{i=1}^{m-1}\frac1{\delta_i}=0$ (i.e., $p_j=r_j$ for $1\le j\le m-1$), and $\delta_m^{-1}\neq 0$. 

In this case $\frac1{\varrho}=\frac1{\delta_{m+1}}+ \frac1{\delta_m}=\frac{1-r}{r}$ and also we need to check that $\widehat{w}\in A_1$. To show this we use Hölder's inequality with 
$\frac{\delta_{m+1}}{\varrho} =1+\frac{\delta_{m+1}}{\delta_m}>1$ to obtain 
\begin{multline*}
\dashint_Q \widehat{w} \d x
=
\dashint_Q w^{\frac{\varrho}{p}}w_m^{-\frac{\varrho }{p_m}} \d x
\le
\Big(\dashint_Q w^{\frac{\delta_{m+1}}{p}}\d x \Big)^{\frac{\varrho}{\delta_{m+1}}}
\Big(\dashint_Q w_m^{-\frac{\delta_m}{p_m}}\d x \Big)^{\frac{\varrho}{\delta_m}}
\\
\le
[w]_{A_{\vec{p},\vec{r}}}^{\varrho}\,\Big(\prod_{i=1}^{m-1}\essinf_{Q} w_i^{\frac1{p_i}}\Big)^{\varrho}
\le 
[w]_{A_{\vec{p},\vec{r}}}^{\varrho}\, \essinf_{Q}\widehat{w},
\end{multline*}
which proves the desired membership and bound.

\medskip

\noindent\textbf{Case 3:} $\sum_{i=1}^{m-1}\frac1{\delta_i}>0$. 

In this case 
\[
\frac{1-r}{r}
=
\sum_{i=1}^{m+1}\frac1{\delta_i}
=
\sum_{i=1}^{m-1}\frac1{\delta_i}+
\frac1{\varrho}
>
\frac1{\varrho}.
\]
Let $\mathcal{I}=\{i: 1\le i\le m-1, \delta_i^{-1}\neq 0\}\neq\emptyset$ and $\mathcal{I}'=\{1,\dots,m-1\}\setminus\mathcal{I}$.
Set
\[
\frac1{\eta_i}
:=
\frac1{\delta_i}
\Big(\sum_{j=1}^{m-1} \frac1{\delta_j}\Big)^{-1}
=
\frac1{\delta_i}
\Big(
\sum_{j=1}^{m+1} \frac1{\delta_j}-\frac1{\delta_m}-\frac1{\delta_{m+1}}\Big)^{-1}
=
\frac1{\delta_i}
\Big(\frac{1-r}{r}-\frac1{\varrho}\Big)^{-1}
\]
for every $i\in\mathcal{I}$, and note that 
$\sum_{i\in\mathcal{I}}\frac1{\eta_i}=1$. Then Hölder's inequality leads to
\begin{multline}\label{q35t5}
\Big(\dashint_Q \widehat{w}^{1-(\frac{1-r}{r}\varrho)'}\d x\Big)^{\frac{1-r}{r}\varrho-1}
=
\Big(\dashint_Q \prod_{i\in\mathcal{I}} w_i^{-\frac{\delta_i}{p_i}\frac1{\eta_i}} \prod_{i\in\mathcal{I}'} w_i^{-\frac{1}{p_i}\varrho((\frac{1-r}{r}\varrho)'-1)} \d x
\Big)^{\frac{1-r}{r}\varrho-1}
\\
\le
\prod_{i\in\mathcal{I}} \Big(\dashint_Q w_i^{-\frac{\delta_i}{p_i}} \d x
\Big)^{\frac{\varrho}{\delta_i}}
\Big(\prod_{i\in\mathcal{I}'} \esssup_{Q} w_i^{-\frac{1}{p_i}} \Big)^{\varrho}.
\end{multline}
On the other hand, if $\delta_m^{-1}\neq 0$ we can use Hölder's inequality with 
$\frac{\delta_{m+1}}{\varrho} =1+\frac{\delta_{m+1}}{\delta_m}>1$ to obtain 
\begin{equation}\label{qt53ae}
\dashint_Q \widehat{w} \d x
=
\dashint_Q w^{\frac{\varrho}{p}}w_m^{-\frac{\varrho }{p_m}} \d x
\le
\Big(\dashint_Q w^{\frac{\delta_{m+1}}{p}}\d x \Big)^{\frac{\varrho}{\delta_{m+1}}}
\Big(\dashint_Q w_m^{-\frac{\delta_m}{p_m}}\d x \Big)^{\frac{\varrho}{\delta_m}}.
\end{equation}
If  $\delta_m^{-1}=0$ then $\varrho=\delta_{m+1}$ and  
\begin{equation}\label{sgtgt}
\dashint_Q \widehat{w} \d x
=
\dashint_Q w^{\frac{\varrho}{p}}w_m^{-\frac{\varrho }{p_m}} \d x
\le
\Big(\dashint_Q w^{\frac{\delta_{m+1}}{p}}\d x \Big)^{\frac{\varrho}{\delta_{m+1}}}
\big(\esssup_{Q} w_m^{-\frac{1}{p_m}}\Big)^{\varrho}.
\end{equation}
If we now combine \eqref{q35t5} with either \eqref{qt53ae} or \eqref{sgtgt} we readily see that $\widehat{w}\in A_{\frac{1-r}{r}\varrho}$ with 
$[\widehat{w}]_{A_{\frac{1-r}{r}\varrho}}\le [\vec w]_{A_{\vec p, \vec r}}^{\varrho}$. This completes the proof of $(i.2)$.

\medskip

To see $(i.3)$ we proceed as before considering three cases.

\medskip

\noindent\textbf{Case 1:} $\sum_{i=1}^{m}\frac1{\delta_i}=0$, that is,  $p_j=r_j$ for $1\le j\le m$.

In this case  we first observe that 
\begin{equation}\label{holder-trivial-case1}
1
=
\dashint_Q \widehat{w}\widehat{w}^{-1}\d x
\le
\Big(\dashint_Q \widehat{w}\d x\Big) \esssup_{Q}\widehat{w}^{-1}
\le
\Big(\dashint_Q \widehat{w}\d x\Big) 
\prod_{i=1}^{m-1} \esssup_{Q} w_i^{-\frac{\varrho}{p_i}}.
\end{equation}
This and \eqref{qrfera} imply
\begin{multline*}
\dashint_Q W^{\frac{\delta_{m+1}}{r_m}} \d \widehat w
=
\Big(\dashint_Q \widehat{w}\d x\Big)^{-1}\Big(\dashint_Q w^{\frac{\delta_{m+1}}{p}}
\d x\Big)
\\
\le
[\vec w]_{A_{\vec p, \vec r}}^{\delta_{m+1}}
\Big(\prod_{i=1}^{m-1} \esssup_{Q} w_i^{-\frac{\delta_{m+1}}{p_i}}\Big)
\Big(\prod_{i=1}^m \esssup_{Q} w_i^{-\frac{1}{p_i}}\Big)^{-\delta_{m+1}}
\\
=
[\vec w]_{A_{\vec p, \vec r}}^{\delta_{m+1}}
\big(\esssup_{Q} w_m^{-\frac{1}{p_m}}\big)^{-\delta_{m+1}}
=
[\vec w]_{A_{\vec p, \vec r}}^{\delta_{m+1}}
\big(\esssup_{Q} W^{-\frac{\delta_{m+1}}{r_m}}  \big)^{-1},
\end{multline*}
where we have used that since in this case $\varrho=\delta_{m+1}$ and that $W=w_m^{\frac{r_m}{p_m}}=w_m$ since $p_m=r_m$. This shows that 
 $W\in A_{1,\frac{\delta_{m+1}}{r_m}}(\widehat{w})$  with  $[W]_{A_{1,\frac{\delta_{m+1}}{r_m}} (\widehat{w})} \le [\vec w]_{A_{\vec p, \vec r}}^{\delta_{m+1}}$. Notice that we have implicitly used that since $0<\widehat{w}<\infty$ a.e.  then the Lebesgue measure and  $\widehat{w}$ have the same null measure sets, hence the essential suprema and infima with respect to the Lebesgue measure and $\widehat{w}$ agree. 

\medskip

\noindent\textbf{Case 2:} $\sum_{i=1}^{m-1}\frac1{\delta_i}=0$ (i.e., $p_j=r_j$ for $1\le j\le m-1$), and $\delta_m^{-1}\neq 0$. 

In this case we use \eqref{aaa:1} and \eqref{holder-trivial-case1} to obtain the desired estimate:
\begin{align*}
&\Big(
\dashint_Q W^{\frac{\delta_{m+1}}{r_m}}   \d \widehat w
\Big)
\Big(
\dashint_Q  W^{-(\frac{p_m}{r_m})'}\d \widehat w
\Big)^{\frac{\frac{\delta_{m+1}}{r_m}}{(\frac{p_m}{r_m})'}}
\\
&
\qquad=
\Big(\dashint_Q \widehat{w} \d x\Big)^{-1-\frac{\delta_{m+1}}{\delta_m}}
\Big(\dashint_Q w^{\frac{\delta_{m+1}}{p}} \d x\Big)
\Big(
\dashint_Q  w_m^{-\frac{\delta_m}{p_m}} \d x
\Big)^{\frac{\delta_{m+1}}{\delta_m}}
\\
&
\qquad\le
\Big(\dashint_Q w^{\frac{\delta_{m+1}}{p}} \d x\Big)
\Big(
\dashint_Q  w_m^{-\frac{\delta_m}{p_m}} \d x
\Big)^{\frac{\delta_{m+1}}{\delta_m}}
\Big(\prod_{i=1}^{m-1} \esssup_{Q} w_i^{-\frac{1}{p_i}}\Big)^{\delta_{m+1}}
\\
&
\qquad\le
[\vec w]_{A_{\vec p, \vec r}}^{\delta_{m+1}}.
\end{align*}

\medskip

\noindent\textbf{Case 3:} $\sum_{i=1}^{m-1}\frac1{\delta_i}>0$.

We saw in the proof of $(i.2)$ above that $\frac{1-r}{r}\varrho>1$ hence Hölder's inequality with that exponent gives
\begin{align*}
1=
\Big(
\dashint_{Q} \widehat{w}^{\frac{r\varrho}{1-r}} \widehat{w}^{-\frac{r\varrho}{1-r}}dx\Big)^{\frac{1-r}{r}\varrho}
	\le
	\Big(\dashint_{Q} \widehat{w}dx\Big)
\Big(\dashint_Q \widehat{w}^{1-(\frac{1-r}{r}\varrho)'}\d x\Big)^{\frac{1-r}{r}\varrho-1}.
\end{align*}
This, \eqref{aaa:2} and \eqref{q35t5} yield if we further assume that  $\delta_m^{-1}\neq 0$ (that is $r_m<p_m$):
\begin{align*}
&
\bigg[\Big(
\dashint_Q W^{\frac{\delta_{m+1}}{r_m}}   \d \widehat w
\Big)
\Big(
\dashint_Q  W^{-(\frac{p_m}{r_m})'}\d \widehat w
\Big)^{\frac{\frac{\delta_{m+1}}{r_m}}{(\frac{p_m}{r_m})'}}\bigg]^{\frac1{\delta_{m+1}}}
\\
&\le
\Big(\dashint_Q \widehat{w}^{1-\big(\frac{1-r}{r}\varrho\big)'}\d x\Big)^{\frac1{\varrho}(\frac{1-r}{r}\varrho-1)}
\Big( \dashint_Q w^{\frac{\delta_{m+1}}{p}}  \d x \Big)^{\frac 1{\delta_{m+1}}}
\Big( \dashint_Q   w_m^{-\frac{\delta_m}{p_m}}\d x \Big)^{\frac1{\delta_m}}
\\
&\le
\Big( \dashint_Q w^{\frac{\delta_{m+1}}{p}}  \d x \Big)^{\frac 1{\delta_{m+1}}}
\Big( \dashint_Q   w_m^{-\frac{\delta_m}{p_m}}\d x \Big)^{\frac1{\delta_m}}
\Big(\prod_{i\in\mathcal{I}}\! \Big(\dashint_Q w_i^{-\frac{\delta_i}{p_i}} \d x
\Big)^{\frac{1}{\delta_i}}\Big)
\Big(\!\prod_{i\in\mathcal{I}'}\! \esssup_{Q} w_i^{-\frac{1}{p_i}} \Big)\!
\\
&\le 
[\vec w]_{A_{\vec p, \vec r}}.
\end{align*}
Taking the sup over all cubes we conclude as desired that $W\in A_{\frac{p_m}{r_m},\frac{\delta_{m+1}}{r_m}}(\widehat{w})$ 
with $[W]_{A_{\frac{p_m}{r_m},\frac{\delta_{m+1}}{r_m}} (\widehat{w})}
\le
[\vec w]_{A_{\vec p, \vec r}}^{\delta_{m+1}}$. On the other hand, if $\delta_m^{-1}=0$, i.e., $r_m=p_m$, we can invoke \eqref{aaa:3} and \eqref{q35t5}
\begin{multline*}
\Big(
\dashint_Q W^{\frac{\delta_{m+1}}{r_m}} \d \widehat w
\Big)
\le
\bigg[
\Big(\dashint_Q \widehat{w}^{1-\big(\frac{1-r}{r}\varrho\big)'}\d x\Big)^{\frac1{\varrho}(\frac{1-r}{r}\varrho-1)}
\Big( \dashint_Q w^{\frac{\delta_{m+1}}{p}}  \d x \Big)^{\frac 1{\delta_{m+1}}}
\bigg]^{\delta_{m+1}}
\\
\le
\bigg[
\Big( \dashint_Q w^{\frac{\delta_{m+1}}{p}}  \d x \Big)^{\frac 1{\delta_{m+1}}}
\Big(\prod_{i\in\mathcal{I}} \Big(\dashint_Q w_i^{-\frac{\delta_i}{p_i}} \d x
\Big)^{\frac{1}{\delta_i}}\Big)
\Big(\prod_{i\in\mathcal{I}'} \esssup_{Q} w_i^{-\frac{1}{p_i}} \Big)
\bigg]^{\delta_{m+1}}
\\
\le 
[\vec w]_{A_{\vec p, \vec r}}^{\delta_{m+1}}\essinf_{Q} w_m^{\frac{\delta_{m+1}}{p_m}}
=
[\vec w]_{A_{\vec p, \vec r}}^{\delta_{m+1}}\essinf_{Q}  W^{\frac{\delta_{m+1}}{r_m}}, 
\end{multline*}
since in this case $\varrho=\delta_{m+1}$ and $W=w_m^{\frac{r_m}{p_m}}=w_m$. This completes the proof of $(i.3)$ and hence that of $(i)$.

\medskip

We now turn our attention $(ii)$. Fix $w_i^{\frac {\theta_i}{p_i}}\in A_{\frac{1-r}{r}\theta_i}$,  $1\le i\le m-1$, so that $\widehat{w}\in A_{\frac{1-r}{r}\varrho}$ (see \eqref{formula2-1}); and $W\in A_{\frac{p_m}{r_m},\frac{\delta_{m+1}}{r_m}} (\widehat{w})$. Let $w_m$ be as in \eqref{formula-2}. Our goal  is to see that 
$\vec{w}\in A_{\vec p, \vec r}$ and, much as before, we split the proof in three cases:

\medskip

\noindent\textbf{Case 1:} $\sum_{i=1}^{m}\frac1{\delta_i}=0$, that is,  $p_j=r_j$ for $1\le j\le m$. 

Note that in this case $\theta_i=\frac r{1-r}$ for every $1\le i\le m-1$. This and Hölder's inequality yield
\begin{multline}\label{6awf}
\essinf_{Q} \Big(\prod_{i=1}^{m-1} w_i^{\frac1{p_i}}\Big)
\le
\Big(\dashint_{Q} \prod_{i=1}^{m-1} w_i^{\frac{\theta_i}{p_i}\frac1{m-1}}dx\Big)^{\frac{(m-1)(1-r)}{r}}
\\
\le
\prod_{i=1}^{m-1} \Big(\dashint_{Q} w_i^{\frac{\theta_i}{p_i}}dx\Big)^{\frac1{\theta_i}}
\le
\prod_{i=1}^{m-1} \Big[w_i^{\frac {\theta_i }{p_i}}\Big]_{ A_1}^{\frac1{\theta_i}}
\essinf_{Q} w_i^{\frac1{p_i}},
\end{multline}
where in the last estimate we have used that in the present scenario $\frac{1-r}{r}\theta_i=1$. This and \eqref{qrfera} give
\begin{align*}
\Big( \dashint_Q w^{\frac{\delta_{m+1}}{p}}  \d x \Big)^{\frac 1{\delta_{m+1}}}
&=
\Big(\dashint_Q W^{\frac{\delta_{m+1}}{r_m}} \d \widehat w\Big)^{\frac 1{\delta_{m+1}}}
\Big(\dashint_Q \widehat{w}\d x\Big)^{\frac 1{\delta_{m+1}}} 
\\
&\le
[W]_{A_{1,\frac{\delta_{m+1}}{r_m}} (\widehat{w})}^{\frac1{\delta_{m+1}}}
[\widehat{w}]_{A_1}^{^{\frac1{\delta_{m+1}}}}
\essinf_Q W^{\frac{1}{r_m}} 
\essinf_Q \widehat{w}^{\frac{1}{\delta_{m+1}}} 
\\
&=
[W]_{A_{1,\frac{\delta_{m+1}}{r_m}} (\widehat{w})}^{\frac1{\delta_{m+1}}}
[\widehat{w}]_{A_1}^{^{\frac1{\delta_{m+1}}}}
\essinf_Q w_m^{\frac{1}{p_m}} 
\essinf_{Q} \Big(\prod_{i=1}^{m-1} w_i^{\frac1{p_i}}\Big)
\\
&\le
[W]_{A_{1,\frac{\delta_{m+1}}{r_m}} (\widehat{w})}^{\frac1{\delta_{m+1}}}
[\widehat{w}]_{A_1}^{^{\frac1{\delta_{m+1}}}}
\Big(
\prod_{i=1}^{m-1} \Big[w_i^{\frac {\theta_i }{p_i}}\Big]_{ A_1}^{\frac1{\theta_i}}
\Big)
\Big(\prod_{i=1}^{m} \essinf_{Q} w_i^{\frac1{p_i}}\Big),
\end{align*}
where we have used that $p_m=r_m$, $w_m=W^{\frac{p_m}{r_m}}=W$ and that $\varrho=\delta_{m+1}$. This readily leads to the desired estimate.

\medskip

\noindent\textbf{Case 2:} $\sum_{i=1}^{m-1}\frac1{\delta_i}=0$ (i.e., $p_j=r_j$ for $1\le j\le m-1$), and $\delta_m^{-1}\neq 0$. 

Using \eqref{aaa:1} and \eqref{6awf} we see that
\begin{align*}
&\Big(\dashint_Q w^{\frac{\delta_{m+1}}{p}} \d x\Big)^{\frac1{\delta_{m+1}}}
\Big(
\dashint_Q  w_m^{-\frac{\delta_m}{p_m}} \d x
\Big)^{\frac1{\delta_m}}
\\
&\qquad=
\Big(
\dashint_Q W^{\frac{\delta_{m+1}}{r_m}}   \d \widehat w
\Big)^{\frac1{\delta_{m+1}}}
\Big(
\dashint_Q  W^{-(\frac{p_m}{r_m})'}\d \widehat w
\Big)^{\frac{1}{r_m(\frac{p_m}{r_m})'}}
\Big(\dashint_Q \widehat{w} \d x\Big)^{\frac1{\varrho}}
\\
&\qquad\le
[W]_{A_{\frac{p_m}{r_m},\frac{\delta_{m+1}}{r_m}} (\widehat{w})}^{\frac1{\delta_{m+1}}}
[\widehat{w}]_{A_1}^{\frac1 {\varrho}}\essinf_{Q} \Big(\prod_{i=1}^{m-1} w_i^{\frac1{p_i}}\Big)
\\
&\qquad\le
[W]_{A_{\frac{p_m}{r_m},\frac{\delta_{m+1}}{r_m}} (\widehat{w})}^{\frac1{\delta_{m+1}}}
[\widehat{w}]_{A_1}^{^{\frac1{\delta_{m+1}}}}
\Big(
\prod_{i=1}^{m-1} \Big[w_i^{\frac {\theta_i }{p_i}}\Big]_{ A_1}^{\frac1{\theta_i}}
\Big)
\Big(\prod_{i=1}^{m-1} \essinf_{Q} w_i^{\frac1{p_i}}\Big),
\end{align*}
which readily gives that $\vec{w}\in A_{\vec{p},\vec{r}}$ with the desired bound. 

\medskip

\noindent\textbf{Case 3:} $\sum_{i=1}^{m-1}\frac1{\delta_i}>0$. 

Let us set $\eta_i=\frac{1-r}{r}(m-1)\theta_i$ for $1\le i\le m-1$ and $\eta_m=(m-1)(\frac{1-r}{r}\varrho)'$ and note that
\begin{align*}
\sum_{i=1}^m\frac1{\eta_i}
&=
\frac{r}{(m-1)(r-1)}\sum_{i=1}^{m-1}\frac1{\theta_i}+ \frac1{(m-1)(\frac{1-r}{r}\varrho)'}
\\
&=
1-\frac{r}{(m-1)(r-1)}\sum_{i=1}^{m-1}\frac1{\delta_i} 
+
\frac1{m-1}-\frac1{m-1}\frac{r}{(1-r)\varrho}
\\
&=
1-\frac{r}{(m-1)(r-1)}\Big(\frac{1-r}{r}-\frac1{\varrho}\Big)
+
\frac1{m-1}-\frac1{m-1}\frac{r}{(1-r)\varrho}
\\
&
=
1.
\end{align*}
Thus Hölder's inequality with the exponents $\eta_i$, $1\le i\le m$, yields
\begin{multline*}
1=
\Big(\dashint_Q \widehat{w}^{-\frac{r}{(m-1)(1-r)\varrho}} \widehat{w}^{\frac{r}{(m-1)(1-r)\varrho}}dx\Big)^{\frac{1-r}{r}(m-1)}
\\
=
\Big(\dashint_Q \widehat{w}^{(1-(\frac{1-r}{r}\varrho)')\frac1{\eta_m}} \prod_{i=1}^{m-1} w_i^{\frac{\theta_i}{p_i}\frac1{\eta_i}}dx\Big)^{\frac{1-r}{r}(m-1)}
\\
\le
\Big(\dashint_Q \widehat{w}^{1-(\frac{1-r}{r}\varrho)'}dx\Big)^{\frac1{\varrho}(\frac{1-r}{r}\varrho-1)}
\prod_{i=1}^{m-1} \Big(\dashint_Q w_i^{\frac{\theta_i}{p_i}}dx\Big)^{\frac1{\theta_i}}.
\end{multline*}
On the other hand, if we let $\mathcal{I}=\{j: 1\le j\le m-1, \delta_j^{-1}\neq 0\}\neq\emptyset$ and $\mathcal{I}'=\{1,\dots,m-1\}\setminus\mathcal{I}$ we observe that $\frac{\theta_i}{p_i}((\frac{1-r}{r}\theta_i)'-1)=\frac{\delta_i}{p_i}$ and hence the previous estimate yields
\begin{multline*}
\Big(\prod_{i\in\mathcal{I}}  \Big(\dashint_Q w_i^{-\frac{\delta_i}{p_i}} \d x
\Big)^{\frac{1}{\delta_i}}\Big)
\Big(\prod_{i\in\mathcal{I}'}  \esssup_{Q} w_i^{-\frac1{p_i}}\Big)
\le
\prod_{i=1}^{m-1}  \Big[w_i^{\frac {\theta_i}{ p_i}}\Big]_{A_{\frac{1-r}{r}\theta_i}}^{\frac1{\theta_i}}  \Big(\dashint_Q w_i^{\frac{\theta_i}{p_i}} dx\Big)^{-\frac1{\theta_i}}
\\
\le
\Big(\prod_{i=1}^{m-1}  \Big[w_i^{\frac {\theta_i}{ p_i}}\Big]_{A_{\frac{1-r}{r}\theta_i}}^{\frac1{\theta_i}} \Big) 
\Big(\dashint_Q \widehat{w}^{1-(\frac{1-r}{r}\varrho)'}dx\Big)^{\frac1{\varrho}(\frac{1-r}{r}\varrho-1)}.
\end{multline*}
This and \eqref{aaa:2} gives when   $\frac1{\delta_m}\neq 0$  
\begin{align*}
&\Big( \dashint_Q w^{\frac{\delta_{m+1}}{p}}  \d x \Big)^{\frac 1{\delta_{m+1}}}
\Big( \dashint_Q   w_m^{-\frac{\delta_m}{p_m}}\d x \Big)^{\frac1{\delta_m}}
\Big(\prod_{i\in\mathcal{I}}  \Big(\dashint_Q w_i^{-\frac{\delta_i}{p_i}} \d x
\Big)^{\frac{1}{\delta_i}}\Big)
\Big(\prod_{i\in\mathcal{I}'}  \esssup_{Q} w_i^{-\frac1{p_i}}\Big)
\\
&\qquad\le
\Big(\prod_{i=1}^{m-1}  \Big[w_i^{\frac {\theta_i}{ p_i}}\Big]_{A_{\frac{1-r}{r}\theta_i}}^{\frac1{\theta_i}} \Big) 
\bigg[\Big(
\dashint_Q W^{\frac{\delta_{m+1}}{r_m}}   \d \widehat w
\Big)
\Big(
\dashint_Q  W^{-(\frac{p_m}{r_m})'}\d \widehat w
\Big)^{\frac{\frac{\delta_{m+1}}{r_m}}{(\frac{p_m}{r_m})'}}\bigg]^{\frac1{\delta_{m+1}}}
\\
&\hskip5cm
\times\Big(
\dashint_Q \widehat{w}\d x
\Big)^{\frac1{\varrho}}
\Big(\dashint_Q \widehat{w}^{1-(\frac{1-r}{r}\varrho)'}dx\Big)^{\frac1{\varrho}(\frac{1-r}{r}\varrho-1)}.
\\
&\qquad\le
[W]_{A_{\frac{p_m}{r_m},\frac{\delta_{m+1}}{r_m}} (\widehat{w})}^{\frac1{\delta_{m+1}}}
[\widehat{w}]_{A_{\frac{1-r}{r}\varrho}}^{\frac1 {\varrho}}
\prod_{i=1}^{m-1}\Big[w_i^{\frac {\theta_i}{ p_i}}\Big]_{A_{\frac{1-r}{r}\theta_i}}^{\frac1{\theta_i}}.
\end{align*}
On the other hand, if  $\frac1{\delta_m}= 0$  we invoke \eqref{aaa:3}:
\begin{align*}
&\Big( \dashint_Q w^{\frac{\delta_{m+1}}{p}}  \d x \Big)^{\frac 1{\delta_{m+1}}}
\big( \esssup_{Q} w_m^{-\frac1{p_m}}\big)
\Big(\prod_{i\in\mathcal{I}}  \Big(\dashint_Q w_i^{-\frac{\delta_i}{p_i}} \d x
\Big)^{\frac{1}{\delta_i}}\Big)
\Big(\prod_{i\in\mathcal{I}'}  \esssup_{Q} w_i^{-\frac1{p_i}}\Big)
\\
&\qquad\le
\Big(\prod_{i=1}^{m-1}  \Big[w_i^{\frac {\theta_i}{ p_i}}\Big]_{A_{\frac{1-r}{r}\theta_i}}^{\frac1{\theta_i}} \Big) 
\bigg[\Big(
\dashint_Q W^{\frac{\delta_{m+1}}{r_m}}   \d \widehat w\Big) \big( \esssup_{Q} W^{-\frac{\delta_{m+1}}{r_m}}  \big)
\bigg]^{\frac1{\delta_{m+1}}}
\\
&\hskip5cm
\times\Big(
\dashint_Q \widehat{w}\d x
\Big)^{\frac1{\varrho}}
\Big(\dashint_Q \widehat{w}^{1-(\frac{1-r}{r}\varrho)'}dx\Big)^{\frac1{\varrho}(\frac{1-r}{r}\varrho-1)}
\\
&\qquad\le
[W]_{A_{1,\frac{\delta_{m+1}}{r_m}} (\widehat{w})}^{\frac1{\delta_{m+1}}}
[\widehat{w}]_{A_{\frac{1-r}{r}\varrho}}^{\frac1 {\varrho}}
\prod_{i=1}^{m-1}\Big[w_i^{\frac {\theta_i}{ p_i}}\Big]_{A_{\frac{1-r}{r}\theta_i}}^{\frac1{\theta_i}},
\end{align*}
since in this case $p_m=r_m$, $\varrho=\delta_{m+1}$, and $W=w_m^{\frac{r_m}{p_m}}=w_m$. This completes the proof of $(ii)$.

\medskip

To finish we observe that \eqref{LHS-rew} and \eqref{RHS-rew} follow at once from the definition of $\varrho$ and either \eqref{formula-1} for $(i)$ or \eqref{formula-2} for $(ii)$. This completes the proof.
\end{proof}

\section{Proof of Theorem \ref{theor:extrapol-general}}\label{section:proof-main}

The proof of Theorem \ref{theor:extrapol-general} (and Remark \ref{remark:end-point})  is split in three main steps. First, we prove a restricted version on which  all the exponents remain fixed but one. For simplicity in the presentation we will fix $p_i$, $1\le i\le m-1$, and vary $p_m$. On the other hand, since we can rearrange the $f_i$'s this clearly extends to any other choice. Second, we iterate the first step to eventually pass from $\vec{p}$ to a generic $\vec{q}$. Last, we see how the easily derive the vector-valued inequalities. 

\subsection{Step 1: Extrapolation on one component}

We first prove a particular version on which we only change one component in $\vec{p}$ (say the last one). Fix  then $\vec q=(q_1,\dots, q_{m-1}, q_m)$, with  $\vec{r}\preceq\vec{q}$ and $r_m<q_m$ so that $q_i=p_i$, $1\le i\le m-1$. Letting $\vec v \in A_{\vec q,\vec{r}}$, we set $\frac1q:=\sum_{i=1}^{m}\frac1{q_i}$  and $v:=\prod_{i=1}^mv_i^{\frac{q}{q_i}}$.  Define 

\[
\frac1r:=\sum_{i=1}^{m+1} \frac1{r_i};
\qquad\quad
\frac1{p_{m+1}}:=1-\frac1p;
\qquad\quad
\frac1{q_{m+1}}:=1-\frac1q,
\]
and, for i=1,\dots, m+1, 
\[
\frac1{\delta_i}=\frac1{r_i}-\frac1{p_i},
\qquad \quad
\frac1{\widetilde{\delta}_i}=\frac1{r_i}-\frac1{q_i}.
\]
Observe that $\delta_i=\widetilde{\delta}_i$ for $1\le i\le m-1$. This means that in view of \eqref{def:varrho}
\[
\frac1{\varrho}
:=
\frac 1{r_m}-\frac 1{r_{m+1}'}+\sum_{i=1}^{m-1}\frac1{p_i}
=
\frac 1{r_m}-\frac 1{r_{m+1}'}+\sum_{i=1}^{m-1}\frac1{q_i}
=
\frac1{\delta_m}+\frac1{\delta_{m+1}}
=
\frac1{\widetilde{\delta}_m}+\frac1{\widetilde{\delta}_{m+1}}
\]
For every $1\le i \le m-1$ we set $w_i:=v_i$. We then apply Lemma \ref{lemma:main}$(i)$ to $\vec v \in A_{\vec q,\vec{r}}$ and  $(i.1)$ yields for every $1\le i\le m-1$,
\[
w_i^{\frac{\theta_i }{p_i}}
=
w_i^{\frac{\theta_i }{q_i}}\in A_{\frac{1-r}{r}\theta_i }, 
\qquad
\mbox{where}\quad
\frac1{\theta_i}
:= 
\frac{1-r}r-\frac1{\widetilde{\delta}_i};
\]
while $(i.2)$ gives
\[
\widehat{w}:=\Big(\prod_{i=1}^{m-1}w_i^{\frac1{p_i}}\Big)^{\varrho}\in A_{\frac{1-r}{r}\varrho};
\]
and finally $(i.3)$ implies that
\begin{equation}\label{aferer}
V
:=
v^{\frac{r_m}{q}} \widehat{w}^{- \frac{r_m}{\widetilde{\delta}_{m+1}}}
\in
A_{\frac{q_m}{r_m},\frac{\widetilde{\delta}_{m+1}}{r_m}}(\widehat{w}).
\end{equation} 
Notice that in particular $\widehat{w}\in A_\infty$, hence it is a doubling measure which is fixed in the rest of the argument.

Let $W\in A_{\frac{p_m}{r_m},\frac{\delta_{m+1}}{r_m}}(\widehat{w})$ be an arbitrary weight and, in concert with \eqref{formula-2}, set $w_m
:=
W^{\frac{p_m}{r_m}} \widehat{w}^{\frac{p_m}{\delta_m}}.
$
Since $w_i^{\frac{\theta_i }{p_i}}\in A_{\frac{1-r}{r}\theta_i }$ for $1\le i\le m-1$ and $\widehat{w}\in A_{\frac{1-r}{r}\varrho}$, we can apply Lemma \ref{lemma:main}$(ii)$ with $\vec{p}$ and $\vec{r}$ to see that $\vec{w}=(w_1,\dots, w_m)\in A_{\vec p,\vec{r}}$ (notice that $\varrho$ is fixed and depends on  $p_i=q_i$, $1\le i \le m-1$, $r_m$, $r_{m+1}$). Thus, by hypothesis it follows that \eqref{extrapol:H} holds. Invoking Lemma \ref{lemma:main}$(iii)$ we then  see that for every $(f,f_1,\dots,f_m)\in\mathcal{F}$
\begin{multline}\label{faefe}
\Big\|\Big(f\widehat{w}^{-\frac1{r_{m+1}'}}\Big)^{r_m}\Big\|_{L^\frac{p}{r_m}(W^{\frac{p}{r_m}}\d\widehat{w})}^{\frac1{r_m}}
=
\|f\|_{L^p(w)}
\lesssim
\prod_{i=1}^m\|f_i\|_{L^{p_i}(w_i)} 
\\
=
\Big(\prod_{i=1}^{m-1}\|f_i\|_{L^{p_i}(w_i)}\Big)
\Big\|\Big(f_m\widehat{w}^{-\frac1{r_m}}\Big)^{r_m}\Big\|_{L^\frac{p_m}{r_m}(W^{\frac{p_m}{r_m}}\d\widehat{w})}^{\frac1{r_m}}.
\end{multline}
Let us introduce
\[
\mathcal{G}
:=
\bigg\{
\bigg(\Big(f\widehat{w}^{-\frac1{r_{m+1}'}}\Big)^{r_m}, \Big(\Big(\prod_{i=1}^{m-1}\|f_i\|_{L^{p_i}(w_i)}\Big)f_m\widehat{w}^{-\frac1{r_m}}\Big)^{r_m}\bigg):
(f,f_1,\dots,f_m)\in\mathcal{F}
\bigg\},
\]
and \eqref{faefe} can be written as
\[
\|F\|_{L^\frac{p}{r_m}(W^{\frac{p}{r_m}}\d\widehat{w})}
\lesssim
\|G \|_{L^\frac{p_m}{r_m}(W^{\frac{p_m}{r_m}}\d\widehat{w})},
\qquad
\forall\,(F,G)\in\mathcal{G},
\]
which holds for every  $W\in A_{\frac{p_m}{r_m},\frac{\delta_{m+1}}{r_m}}(\widehat{w})$. This allows us to employ Theorem \ref{theor:duo} to obtain that for every $s_m$ with $r_m<s_m<\infty$ and $0<s,\tau<\infty$ such that
\begin{equation}\label{fraefwera}
\frac 1s-\frac 1{p}=\frac1{\tau}-\frac 1{\delta_{m+1}}=\frac 1{s_m}-\frac 1{p_m},
\end{equation}
and for every $U\in A_{\frac{s_m}{r_m},\frac{\tau}{r_m}} (\widehat{w})$ the following estimate holds
\begin{equation}\label{wegsrt54}
\|F\|_{L^\frac{s}{r_m}(U^{\frac{s}{r_m}}\d\widehat{w})}
\lesssim
\|G \|_{L^\frac{s_m}{r_m}(U^{\frac{s_m}{r_m}}\d\widehat{w})},
\qquad
\forall\,(F,G)\in\mathcal{G}.
\end{equation}

Next let $s=:q$, $s_m=:q_m$ and $\tau=\widetilde{\delta}_{m+1}$. Notice that by assumption $r_m<q_m=s_m$. Since $q_i=p_i$ for $1\le i \le m-1$, it follows that
\[
\frac 1s-\frac 1{p}
=
\frac 1{q}-\frac 1{p}
=
\sum_{i=1}^m \Big(\frac1{q_i}-\frac1{p_i}\Big)
=
\frac1{q_m}-\frac1{p_m}
=
\frac 1{s_m}-\frac 1{p_m}
\]
and
\[
\frac1{\tau}-\frac1{\delta_{m+1}}
=
\frac1{\widetilde{\delta}_{m+1}}-\frac1{\delta_{m+1}}
=
\frac 1{p_{m+1}}-\frac 1{q_{m+1}}
=
\frac 1{q}-\frac 1{p}
=
\frac 1{s}-\frac 1{p},
\]
thus \eqref{fraefwera} holds.	On the other hand, note that  \eqref{aferer} gives $V\in
A_{\frac{q_m}{r_m},\frac{\widetilde{\delta}_{m+1}}{r_m}}(\widehat{w})=A_{\frac{s_m}{r_m},\frac{\tau}{r_m}} (\widehat{w})
$. All these imply that \eqref{wegsrt54} holds with $U=V$ and these choices of parameters. Consequently,  Lemma \ref{lemma:main}$(iii)$ (applied with $\vec{q}$ and $\vec{r}$) yields for every $(f,f_1,\dots,f_m)\in\mathcal{F}$
\begin{multline*}
\|f\|_{L^q(v)}
=
\Big\|\Big(f\widehat{w}^{-\frac1{r_{m+1}'}}\Big)^{r_m}\Big\|_{L^\frac{q}{r_m}(V^{\frac{q}{r_m}}\d\widehat{w})}^{\frac1{r_m}}
=
\|F\|_{L^\frac{q}{r_m}(V^{\frac{q}{r_m}}\d\widehat{w})}^{\frac1{r_m}}
\lesssim
\|G \|_{L^\frac{q_m}{r_m}(V^{\frac{q_m}{r_m}}\d\widehat{w})}^{\frac1{r_m}}
\\
=
\big( \prod_{i=1}^{m-1}\|f_i\|_{L^{p_i}(w_i)}\big)
\Big\|\Big(f_m\widehat{w}^{-\frac1{r_m}}\Big)^{r_m}\Big\|_{L^\frac{q_m}{r_m}(W^{\frac{q_m}{r_m}}\d\widehat{w})}^{\frac1{r_m}}
=
\prod_{i=1}^{m}\|f_i\|_{L^{q_i}(w_i)},
\end{multline*}
which is desired estimate in the present case.

\subsection{Step 2: Extrapolation on all components}

To complete the proof of Theorem \ref{theor:extrapol-general} (and of Remark \ref{remark:end-point})  we need to extrapolate from the given $\vec{p}=(p_1,\dots,p_m)$ with $\vec{r}\preceq\vec{p}$ to an arbitrary $\vec{q}=(q_1,\dots, q_m)$ satisfying  $\vec{r}\prec\vec{q}$. In view of Remark \ref{remark:end-point} if $p_i=r_i$ for some $1\le i\le m$ we can allow $r_i\le q_i$. This means that $\vec{r}\preceq\vec{q}$ with $r_j<q_j$ for those $j$'s for which $r_j<p_j$.

Schematically, in the previous section we have shown that 
\begin{multline}\label{extrapol-notation:freeze:1}
\mbox{$\vec{t}=(t_1,\dots, t_{m-1}, t_m)$  with $\vec{r}\preceq\vec{t}$  extrapolates to}
 \\      
\mbox{ $\vec{s}=(t_1,\dots, t_{m-1}, s_m)$  whenever $\vec{r}\preceq\vec{s}$ and $r_m<s_m$}.
\end{multline}
By this we mean that if \eqref{extrapol:H} holds for the exponent $\vec{t}$ and for all $\vec w \in A_{\vec t,\vec{r}}$, then \eqref{extrapol:H}  holds for the
exponent $\vec{s}$ and for all $\vec w \in A_{\vec s,\vec{r}}$ with $r_m<s_m$. Notice that in \eqref{extrapol-notation:freeze:1} the first $m-1$ components in $\vec{t}$ and  $\vec{s}$ are frozen. Switching the roles of $f_i$ and $f_m$ for some fixed $1\le i\le m-1$ and using the same schematic notation we can freeze all the components but the $i$-th to obtain 
\begin{multline}\label{extrapol-notation:freeze:i}
\mbox{$\vec{t}=(t_1,\dots, t_{i-1}, t_i, t_{i+1},\dots,  t_m)$  with $\vec{r}\preceq\vec{t}$  extrapolates to}
 \\      
\mbox{ $\vec{s}=(t_1,\dots, t_{i-1}, s_i, t_{i+1}, \dots, t_m)$  whenever $\vec{r}\preceq\vec{s}$ and $r_i<s_i$}.
\end{multline}
To prove our desired estimates we shall iterate \eqref{extrapol-notation:freeze:i} and at any stage we need to check that new vector of exponents $\vec{s}$ satisfies  $\vec{r}\preceq\vec{s}$ and $r_i<s_i$. We consider two cases:

\medskip

\noindent\textbf{Case 1:} $p_i\le q_i$ for all $1\le i \le m$.

In this case, our first goal is to see that
\begin{equation}\label{extrapol:step1:case1}
\mbox{$\vec{p}=(p_1,\dots, p_m)$  extrapolates to $\vec{t}=(t_1,t_2, \dots, t_m)=(q_1,p_2,\dots,p_m)$}.
\end{equation} 
First, if $q_1=p_1$ there is nothing to see. Otherwise, if $p_1<q_1$ we have that $r_1\le p_1<q_1=t_1$ and also $r_i\le p_i=t_i$ for every $2\le i\le m$. Moreover, 
\[
\frac{1}{t}
=
\sum_{i=1}^m\frac1{t_i}
=
\frac{1}{q_1}+\sum_{i=2}^m\frac1{p_i}
\ge
\sum_{i=1}^m\frac{1}{q_i}=\frac 1q
>\frac1{r_{m+1}'},
\]
since $\vec{r}\preceq \vec{q}$. Thus $\vec{r}\preceq\vec{t}$ with $r_1<q_1$ in which case \eqref{extrapol-notation:freeze:i} applies with $i=1$ and \eqref{extrapol:step1:case1} follows. 

Next from the conclusion of \eqref{extrapol:step1:case1} we can extrapolate to  $\vec{s}=(s_1,\dots,s_m)=(q_1,q_2, p_3, \dots, p_m)$. If $q_2=p_2$ there is nothing to do, otherwise, $r_1\le p_1\le q_1=s_1$, $r_2\le p_2<q_2=s_2$ and $r_i\le p_i\le q_i=s_i$ for $i=3,\dots,m$ . Moreover,
\[
\frac{1}{s}
=
\sum_{i=1}^m \frac1{s_i}
=
\frac{1}{q_1}+ \frac{1}{q_2}+ \sum_{i=3}^m\frac1{p_i}
\ge
\sum_{i=1}^m\frac{1}{q_i}=\frac 1q
>\frac1{r_{m+1}'},
\]
since $\vec{r}\preceq \vec{q}$. Thus $\vec{r}\preceq\vec{s}$ with $r_2<s_2$ in which case \eqref{extrapol-notation:freeze:i} applies with $i=2$ and we have the desired weighted estimates with the exponent $\vec{s}=(s_1,\dots,s_m)=(q_1,q_2, p_3, \dots, p_m)$. We iterate this procedure with $i=3, \dots, m$ and in the last step we analogously pass from $(q_1,\dots, q_{m-1}, p_m)$ to $(q_1,\dots,q_m)$. The proof of the present case is then complete. 

\medskip

\noindent\textbf{Case 2:} There exists some $i$ such that $p_{i}>q_{i}$. In this case, rearranging the terms if needed we may assume that $p_i> q_i$ for $1\le i\le i_0$ and $p_i\le q_i$ for $i_0+1\le i_0\le m$ where $1\le i_0\le m$ (if $i_0=m$ we just have $p_i>q_i$ for all $1\le i\le m$).

We proceed as before and iterate \eqref{extrapol-notation:freeze:i} so that in the $i$-th step we pass from $\vec{t}=(t_1,\dots, t_m)$ to $\vec{s}=(s_1,\dots, s_m)$ where
$t_j=s_j=q_j$ for $1\le j\le i-1$, $t_i=p_i$, $s_i=q_i$, and $t_j=s_j=p_j$ for $i+1\le j\le m$. We may assume that $p_i\neq q_i$, for otherwise there is nothing to prove. Note that since $r_j\le p_j, q_j$ then clearly $r_j\le s_j$ for every $1\le j\le m$. To justify that we can invoke \eqref{extrapol-notation:freeze:i} we consider two cases. 

First, if $1\le i\le i_0$ we note that $r_i\le q_i<p_i$, hence $p_i\neq r_i$ in which case as explained above $r_i<q_i$ (see Remark \ref{remark:end-point}). Moreover, since $1\le i\le i_0$,
\[
\frac{1}{s}
=
\sum_{j=1}^m \frac1{s_j}
=
\sum_{j=1}^{i} \frac1{q_j}
+
\sum_{j=i+1}^m \frac1{p_j}
>
\sum_{j=1}^m \frac1{p_j}
=
\frac 1p
>\frac1{r_{m+1}'}.
\]
Altogether, we have seen that $\vec{r}\preceq\vec{s}$ and $r_i<q_i$. Thus we can invoke \eqref{extrapol-notation:freeze:i}.

Consider next the case $i_0+1\le i\le m$ (if $i_0=m$ this case is vacuous). In this scenario, $r_i\le p_i<q_i$ (recall that we have disregarded the trivial case $q_i=p_i$). In addition, since $i_0+1\le i\le m$,
\[
\frac{1}{s}
=
\sum_{j=1}^m \frac1{s_j}
=
\sum_{j=1}^{i} \frac1{q_j}
+
\sum_{j=i+1}^m \frac1{p_j}
\ge
\sum_{j=1}^m \frac1{q_j}
=
\frac 1q
>\frac1{r_{m+1}'}.
\]
Thus, $\vec{r}\preceq\vec{s}$ and $r_i<q_i$ which justify the use of \eqref{extrapol-notation:freeze:i}.

In both scenarios we can then perform the $i$-th step of the iteration and this completes the proof of \eqref{extrapol:C} for a generic $\vec{q}$.

\medskip

\subsection{Step 3: Vector-valued inequalities}\label{section:proof:v-v}

We now turn our attention to the vector-valued inequalities. Fix  $\vec{s}=(s_1,\dots,s_m)$ with $\vec{r}\prec\vec{s}$  where $\frac1s:=\sum_{i=1}^m\frac1{s_i}$. Define a new family $\mathcal{F}_{\vec{s}}$ consisting on the $m+1$-tuples of the form
\[
(F, F_1, \dots,F_m)=\bigg(\Big(\sum_j (f^j)^{s}\Big)^\frac1{s},\Big(\sum_j (f_1^j)^{s_1}\Big)^\frac1{s_1},\dots,\Big(\sum_j (f_m^j)^{s_m}\Big)^\frac1{s_m}\bigg)
\]
where $\{(f^j, f_1^j,\dots, f_m^j)\}_j\subset\mathcal{F}$. Without loss of generality we may assume that all of the sums in the definition of
$\mathcal{F}_{\vec{s}}$ are finite; the conclusion for infinite sums follows at once from the monotone convergence
theorem. For any $v\in A_{\vec{s},\vec{r}}$, write $v:=\prod_{i=1}^mv_i^{\frac{s}{s_i}}$  and apply \eqref{extrapol:C} with $\vec{s}$ in place of $\vec{q}$ and H\"older's inequality to obtain
\begin{multline}\label{vv-initial}
\|F\|_{L^s(v)}
=
\bigg(\sum_j \|f^j\|_{L^s(v)}^s\bigg)^\frac1s
\lesssim
\bigg(\sum_j \prod_{i=1}^m\|f_i^j\|_{L^{s_i}(v_i)}^s\bigg)^\frac1s
\\
\le
\prod_{i=1}^m\bigg(\sum_j \|f_i^j\|_{L^{s_i}(v_i)}^{s_i}\bigg)^\frac1{s_i}
=
\prod_{i=1}^m\|F_i\|_{L^{s_i}(v_i)},
\end{multline}
for every $(F,F_1,\dots,F_m)\in\mathcal{F}_{\vec{s}}$.
We can now apply the first part of Theorem \ref{theor:extrapol-general} to $\mathcal{F}_{\vec{s}}$ where we use as our initial estimate \eqref{vv-initial} in place of \eqref{extrapol:H}. Thus, \eqref{extrapol:C} holds for $\mathcal{F}_{\vec{s}}$ and this gives us immediately \eqref{extrapol:vv}. 
This completes the proof of Theorem \ref{theor:extrapol-general}. \qed

\subsection{Proof of Remark \ref{remark:iteration}}\label{section:iterarion} 

The iterated vector-valued inequality in Remark \ref{remark:iteration} follow easily by repeating the argument in the previous section. Fixed $\vec{s}$ and $\vec{t}$ and with $\mathcal{F}_{\vec{s}}$ as defined in the previous section we consider a new family  $(\mathcal{F}_{\vec{s}})_{\vec{t}}$ consisting on the $(m+1)$-tuples
\[
(\mathfrak{F}, \mathfrak{F}_1,\dots, \mathfrak{F}_m)=
\bigg(\Big(\sum_j (F^j)^{t}\Big)^\frac1{t},\Big(\sum_j (F_1^j)^{t_1}\Big)^\frac1{t_1},\dots,\Big(\sum_j (F_m^j)^{t_m}\Big)^\frac1{t_m}\bigg)
\]
where $\{(F^j, F_1^j,\dots, F_m^j)\}_j\subset\mathcal{F}_{\vec{s}}$. Without loss of generality we may assume that all of the sums in the definition of
$(\mathcal{F}_{\vec{s}})_{\vec{t}}$ are finite; the conclusion for infinite sums follows at once from the monotone convergence
theorem. Next for any $v\in A_{\vec{t},\vec{r}}$, write $v:=\prod_{i=1}^mv_i^{\frac{t}{t_i}}$  and apply \eqref{extrapol:vv} with $\vec{t}$ in place of $\vec{q}$ and H\"older's inequality to obtain
\begin{multline}\label{vv-initial:iterated}
\|\mathfrak{F}\|_{L^t(v)}
=
\bigg(\sum_j \|F^j\|_{L^t(v)}^t\bigg)^\frac1t
\lesssim
\bigg(\sum_j \prod_{i=1}^m\|F_i^j\|_{L^{t_i}(v_i)}^t\bigg)^\frac1s
\\
\le
\prod_{i=1}^m\bigg(\sum_j \|F_i^j\|_{L^{t_i}(v_i)}^{t_i}\bigg)^\frac1{t_i}
=
\prod_{i=1}^m\|\mathfrak{F}_i\|_{L^{t_i}(v_i)},
\end{multline}
for every $(\mathfrak{F},\mathfrak{F}_1,\dots,\mathfrak{F}_m)\in(\mathcal{F}_{\vec{s}})_{\vec{t}}$.
We can now apply the first part of Theorem \ref{theor:extrapol-general} to $(\mathcal{F}_{\vec{s}})_{\vec{t}}$ where we use as our initial estimate \eqref{vv-initial:iterated} in place of \eqref{extrapol:H}. Thus, \eqref{extrapol:C} holds for $(\mathcal{F}_{\vec{s}})_{\vec{t}}$ and this gives us immediately \eqref{vv-iterated}. Note that repeating this idea one can easily  obtain iterated vector-valued inequalities with arbitrary number of ``sums''. \qed

\section{Proof of Theorem \ref{thm:multilinear:comm}}\label{section:proof-comm}

We need the following auxiliary result in the spirit of \cite{BMMST}.

\begin{Proposition}\label{prop:comm-Banach}
Let $T$ be an $m$-linear operator and let $\vec{r}=(r_1,\dots,r_{m+1})$, with $1\le r_1,\dots,r_{m+1}<\infty$. Assume that there exists $\vec s=(s_1,\dots, s_m)$, with $1\le s_1,\dots, s_m<\infty$, $1<s<\infty$, and $\vec{r}\prec\vec{s}$, such that for all $\vec{w} = (w_1, \dots, w_m)\in A_{\vec{s},\vec{r}}$, we have
\begin{equation}
\label{vector-weigh:Banach}
\|T(f_1, f_2,\dots, f_m)\|_{L^s (w)} \lesssim \prod_{i=1}^m \|f_i\|_{L^{s_i}\left(w_i\right)},
\end{equation}
where $\frac1s:=\frac1{s_1}+\dots+\frac1{s_m}$ and $w:=\prod_{i=1}^m w_i^{\frac{s}{s_i}}$.

Then, 	for all weights $\vec v \in A_{\vec s,\vec{r}}$, for all $\textbf{b} = (b_1, \dots, b_m) \in  {\rm BMO}^m$, and for each multi-index $\alpha$, we have
\begin{equation}
\label{multi-commutator-II:Banach}
\|[T, \textbf{b}]_\alpha (f_1, f_2,\dots, f_m)\|_{L^s (v)}
\lesssim
\prod_{i=1}^m \|b_j\|^{\alpha_i}_{{\rm BMO}} \|f_i\|_{L^{s_i}\left(v_i\right)},
\end{equation}
where $v:=\prod_{i=1}^m v_i^{\frac{s}{s_i}}$.
\end{Proposition}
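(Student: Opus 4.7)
The plan is to adapt the classical Coifman--Rochberg--Weiss ``Cauchy integral trick'', as used in \cite{BMMST} for the $A_{\vec p}$ setting, to the multi-parameter weight class $A_{\vec s,\vec r}$. First, iterating Cauchy's formula once per slot, for radii $\varepsilon_1,\dots,\varepsilon_m>0$ I write
\[
[T,\mathbf b]_\alpha(f_1,\dots,f_m)(x)
=
\frac{\alpha_1!\cdots\alpha_m!}{(2\pi i)^m}\oint_{|z_1|=\varepsilon_1}\!\!\!\cdots\oint_{|z_m|=\varepsilon_m}
e^{\sum_j z_j b_j(x)}\, T(e^{-z_1 b_1}f_1,\dots,e^{-z_m b_m}f_m)(x)\, \prod_{j=1}^m\frac{dz_j}{z_j^{\alpha_j+1}},
\]
which is verified by Taylor expanding the factors $e^{\pm z_j b_j}$ and collecting coefficients against the definition of $[T,\mathbf b]_\alpha$. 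When $\alpha_j=0$ the $j$-th integration simply evaluates at $z_j=0$, so the formula handles arbitrary multi-indices uniformly.

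Since $1<s<\infty$, Minkowski's integral inequality pulls the $L^s(v)$-norm past each of the contour integrations, reducing the task to bounding
\[
\mathcal I(\vec z):=\bigl\|e^{\sum_j z_j b_j}\,T(e^{-z_1 b_1}f_1,\dots,e^{-z_m b_m}f_m)\bigr\|_{L^s(v)}
\]
uniformly for $\vec z$ on the polycircle $|z_j|=\varepsilon_j$. To this end, set $\tilde v_j(z_j):=v_j|e^{z_j b_j}|^{s_j}=v_j e^{s_j\mathrm{Re}(z_j)b_j}$; a direct computation gives
$\tilde w:=\prod_{j=1}^m \tilde v_j^{s/s_j}=v\prod_{j=1}^m|e^{z_j b_j}|^s$,
which is exactly the extra factor that appears inside $\mathcal I(\vec z)$, so that $\mathcal I(\vec z)=\|T(e^{-z_1 b_1}f_1,\dots,e^{-z_m b_m}f_m)\|_{L^s(\tilde w)}$. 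Assuming $\vec{\tilde v}\in A_{\vec s,\vec r}$ with a uniform-in-$\vec z$ constant, the hypothesis \eqref{vector-weigh:Banach} applied to $\vec{\tilde v}$ together with the identity $\|e^{-z_j b_j}f_j\|_{L^{s_j}(\tilde v_j)}=\|f_j\|_{L^{s_j}(v_j)}$ yields $\mathcal I(\vec z)\lesssim \prod_j\|f_j\|_{L^{s_j}(v_j)}$. Assembling with the contour representation gives a bound of order $\bigl(\prod_j\varepsilon_j^{-\alpha_j}\bigr)\prod_j\|f_j\|_{L^{s_j}(v_j)}$, and choosing $\varepsilon_j\sim \|b_j\|_{\mathrm{BMO}}^{-1}$ produces \eqref{multi-commutator-II:Banach}.

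The principal obstacle is the uniform membership $\vec{\tilde v}\in A_{\vec s,\vec r}$ for $|z_j|$ small depending only on $\|b_j\|_{\mathrm{BMO}}$ --- the $\vec r$-adapted analogue of the classical openness of $A_p$ under multiplication by $e^{\mathrm{Re}(z) b}$. A guiding algebraic observation is that
\[
\Bigl(\tfrac1{r_j}-\tfrac1{s_j}\Bigr)\cdot\tfrac{s_j r_j}{r_j-s_j}=-1,
\qquad
\Bigl(\tfrac1{s}-\tfrac1{r_{m+1}'}\Bigr)\cdot\tfrac{s r_{m+1}'}{r_{m+1}'-s}=1,
\]
so that the total exponent of each $b_j$ produced by the factors $\tilde v_i^{r_i/(r_i-s_i)}$ and $\tilde w^{r_{m+1}'/(r_{m+1}'-s)}$ in the product defining $[\vec{\tilde v}]_{A_{\vec s,\vec r}}$ cancels exactly, equalling $-\mathrm{Re}(z_j)+\mathrm{Re}(z_j)=0$. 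Making this rigorous requires two ingredients: (a) John--Nirenberg, to replace each $\dashint_Q e^{\lambda b_j}dx$ by $e^{\lambda b_{j,Q}}$ up to a multiplicative constant, valid for $|\lambda|\|b_j\|_{\mathrm{BMO}}$ below a dimensional threshold; and (b) a self-improvement/openness property of the $A_{\vec s,\vec r}$ class, needed to separate $v_j$ from the exponential perturbation via Hölder's inequality. Openness in turn follows from Lemma \ref{lemma:main}, which reduces $A_{\vec s,\vec r}$ to a finite combination of scalar $A_p$ and $A_{p,r}$ conditions (with respect to suitable doubling measures), each of which enjoys the classical reverse Hölder self-improvement. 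Combining these yields $[\vec{\tilde v}]_{A_{\vec s,\vec r}}\le C[\vec v]_{A_{\vec s,\vec r}}$ for $\vec z$ in a sufficiently small polydisk, closing the argument.
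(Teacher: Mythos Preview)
Your approach is essentially the same as the paper's: the Cauchy integral trick from \cite{BMMST}, Minkowski's inequality (exploiting $s>1$) to reduce to a uniform bound on $\mathcal I(\vec z)$, and then the key step of showing that the exponentially perturbed weights $\tilde v_j=v_j\,e^{s_j\mathrm{Re}(z_j)b_j}$ remain in $A_{\vec s,\vec r}$ with a uniform constant for $|z_j|$ small compared to $\|b_j\|_{\mathrm{BMO}}^{-1}$.

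The one point where the paper differs is in how that last step is executed. You invoke Lemma~\ref{lemma:main}, whose characterization of $A_{\vec s,\vec r}$ involves an $A_{p,r}(\widehat w)$ condition with respect to the \emph{weighted} doubling measure $\widehat w$ built from $w_1,\dots,w_{m-1}$. Since you are perturbing \emph{all} components simultaneously, $\widehat w$ itself changes with $\vec z$, so ``apply reverse H\"older to each scalar piece'' is not quite as clean as you suggest. The paper instead states and uses a companion result, Lemma~\ref{lemma:main:II}, which characterizes $A_{\vec s,\vec r}$ purely in terms of Lebesgue-measure $A_p$ conditions: $v^{\delta_{m+1}/s}\in A_{\frac{1-r}{r}\delta_{m+1}}$ and $v_i^{-\delta_i/s_i}\in A_{\frac{1-r}{r}\delta_i}$ for $1\le i\le m$. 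With this in hand one applies reverse H\"older (in Lebesgue measure) to each of these $m+1$ pieces, then estimates $[\vec{\tilde v}]_{A_{\vec s,\vec r}}$ directly by H\"older's inequality, splitting each average into a $v$-part (controlled by reverse H\"older and $[\vec v]_{A_{\vec s,\vec r}}$) and an exponential part (controlled by $[e^{\lambda b_i}]_{A_q}\le 4^{|\lambda|\|b_i\|_{\BMO}}$ for $|\lambda|\le\min\{1,q-1\}/\|b_i\|_{\BMO}$). Your ``algebraic observation'' about exponent cancellation is exactly what makes the exponential factors combine into a single $A_q$-type quantity per $i$. So the strategy is right; just swap the structural lemma for the one that stays entirely in Lebesgue measure.
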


Assuming this result momentarily and adopting  the notation introduced just before Lemma \ref{lemma:main} we let $\vec{s}=(s_1,\dots,s_m)$ with 
set $s_i=\frac{r_i}{r}$, $1\le i\le m$. Since $r<1$ it follows at once that $r_i<s_i$ for every $1\le i\le m$ and 
\[
\frac1s:=
\sum_{i=1}^m \frac1{s_i}
=
r\sum_{i=1}^m \frac1{r_i}
=
r\left(\frac1r-\frac1{r_{m+1}}\right)
>
1-\frac1{r_{m+1}}
=
\frac1{r_{m+1}'}.
\]
Hence, $\vec{r}\prec\vec{s}$ and we can invoke Theorem \ref{theor:extrapol-general} to show that 
\[
\|T(f_1, f_2,\dots, f_m)\|_{L^s (w)} \lesssim \prod_{i=1}^m \|f_i\|_{L^{s_i}\left(w_i\right)}
\]
for all  $\vec{w} = (w_1, \dots, w_m)\in A_{\vec{s},\vec{r}}$, where $w:=\prod_{i=1}^mw_i^{\frac{s}{s_i}}$.

At this point we observe that by construction $s>1$, therefore Proposition \ref{prop:comm-Banach} applies and we conclude that \eqref{multi-commutator-II:Banach} holds for all  $\vec v \in A_{\vec s,\vec{r}}$. To remove the restriction $s>1$ we apply again Theorem  \ref{theor:extrapol-general} and \eqref{multi-commutator-II} and \eqref{multi-comm:VV} follows. The proof is complete modulo that of Proposition \ref{prop:comm-Banach}. \qed

\bigskip

We state some auxiliary result similar to Lemma \ref{lemma:main} (see \cite[Theorem 3.6]{LOPTT} for the case $\vec{r}=(1,\dots,1)$). Then proof is given at the end of this section. 

\begin{Lemma}\label{lemma:main:II}
	Let  $\vec p=(p_1,\dots, p_m)$ with $1< p_1,\dots,p_m<\infty$ and $\vec{r}=(r_1,\dots,r_{m+1})$
	with $1\le r_1,\dots,r_{m+1}<\infty$ be such that $\vec{r}\preceq\vec{p}$. Set, for every $1\le i\le m$,
	\begin{equation}\label{def:theta:II}
	\frac1{\theta_i}
	:= 
	\frac{1-r}r-\frac1{\delta_i} 
	=
	\sum_{j=1}^{m+1} \frac1{\delta_j}-\frac1{\delta_i}
	>0,
	\end{equation}
	where we are using the notation introduced before Lemma \ref{lemma:main}.
	Then the following hold:
	\begin{list}{$(\theenumi)$}{\usecounter{enumi}\leftmargin=.8cm
			\labelwidth=.8cm\itemsep=0.2cm\topsep=.1cm
			\renewcommand{\theenumi}{\roman{enumi}}}
		
		\item Given  $\vec{w}=(w_1,\dots,w_m)\in A_{\vec p, \vec r}$, write $w:=\prod_{i=1}^mw_i^{\frac{p}{p_i}}$. Then  $w_i^{\frac{\theta_i }{p_i}}\in A_{\frac{1-r}{r}\theta_i }$ with $\Big[w_i^{\frac {\theta_i }{p_i}}\Big]_{ A_{\frac{1-r}{r}\theta_i }} \le [\vec w]_{A_{\vec p, \vec r}}^{\theta_i }$, for every $1\le i\le m$, and  		
		$w^{\frac{\delta_{m+1}}{p}}\in A_{\frac{1-r}{r}\delta_{m+1}}$ with 
			$\Big[w^{\frac{\delta_{m+1}}{p}}\Big]_{A_{\frac{1-r}{r}\delta_{m+1}}}\le [\vec w]_{A_{\vec p, \vec r}}^{\delta_{m+1}}$.

		\item Given $w_i^{\frac {\theta_i}{p_i}}\in A_{\frac{1-r}{r}\theta_i}$,  $1\le i\le m$,  write $w:=\prod_{i=1}^mw_i^{\frac{p}{p_i}}$ and assume that $w^{\frac{\delta_{m+1}}{p}}\in A_{\frac{1-r}{r}\delta_{m+1}}$. 
		Then $\vec{w}=(w_1,\dots, w_m)\in A_{\vec p, \vec r}$ and, moreover,
		\[
		[\vec w]_{A_{\vec p, \vec r}}
		\le 
		\Big[w^{\frac{\delta_{m+1}}{p}}\Big]_{A_{\frac{1-r}{r}\delta_{m+1}}}^{\frac1 {\delta_{m+1}}}
		\prod_{i=1}^{m}\Big[w_i^{\frac {\theta_i}{ p_i}}\Big]_{A_{\frac{1-r}{r}\theta_i}}^{\frac1{\theta_i}}
		\]
		\end{list}
\end{Lemma}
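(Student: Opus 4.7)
The plan is to mimic the strategy of Lemma \ref{lemma:main} but dispense with the auxiliary measure $\widehat{w}$, since the characterization is now expressed entirely in terms of standard (unweighted) Muckenhoupt classes. Throughout I write $u_i := w_i^{\theta_i/p_i}$, $\sigma_i := w_i^{-\delta_i/p_i}$, $q_i := (1-r)\theta_i/r$ for $1 \le i \le m$, $q_{m+1} := (1-r)\delta_{m+1}/r$, and $S := \sum_{i=1}^m \delta_i^{-1}$. Direct arithmetic from \eqref{def:varrho} and \eqref{def:theta:II} yields the four duality identities $q_i - 1 = \theta_i/\delta_i$, $u_i^{1-q_i'} = \sigma_i$, $q_{m+1} - 1 = \delta_{m+1}S$, and $(w^{\delta_{m+1}/p})^{1-q_{m+1}'} = w^{-1/(pS)}$, which are the bridges connecting the $A_q$ conditions to the $A_{\vec p, \vec r}$ data.

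Part (i) is a Hölder exercise. For $u_i \in A_{q_i}$ I would use the factorization $u_i = w^{\theta_i/p}\prod_{j \neq i} w_j^{-\theta_i/p_j}$ and apply Hölder with exponents whose reciprocals are $\theta_i/\delta_{m+1}$ and $\{\theta_i/\delta_j\}_{j \neq i}$; these sum to $1$ by the definition of $\theta_i$. This is verbatim the proof of Lemma \ref{lemma:main}(i.1) and yields $[u_i]_{A_{q_i}}\le [\vec w]_{A_{\vec p,\vec r}}^{\theta_i}$. Symmetrically, for $w^{\delta_{m+1}/p}\in A_{q_{m+1}}$ I would factor $w^{-1/(pS)} = \prod_i w_i^{-1/(p_i S)}$ and apply Hölder with exponents $\beta_i = \delta_i S$ (whose reciprocals sum to $1$ since $S = \sum \delta_i^{-1}$) to obtain $\dashint_Q w^{-1/(pS)} \le \prod_i (\dashint_Q \sigma_i)^{1/(\delta_i S)}$; raising to the power $\delta_{m+1}S$ and multiplying by $\dashint_Q w^{\delta_{m+1}/p}$ produces $[w^{\delta_{m+1}/p}]_{A_{q_{m+1}}} \le [\vec w]_{A_{\vec p,\vec r}}^{\delta_{m+1}}$. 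In the degenerate case $p_i = r_i$, the corresponding integral is simply replaced by the appropriate essential supremum.

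For part (ii) I would read the $m+1$ hypothesized $A_q$ conditions, after the substitutions above and raising to the suitable powers, as
$$\Big(\dashint_Q u_i\Big)^{1/\theta_i}\Big(\dashint_Q \sigma_i\Big)^{1/\delta_i} \le [u_i]_{A_{q_i}}^{1/\theta_i}, \quad \Big(\dashint_Q w^{\frac{\delta_{m+1}}{p}}\Big)^{1/\delta_{m+1}}\Big(\dashint_Q w^{-1/(pS)}\Big)^{S} \le [w^{\frac{\delta_{m+1}}{p}}]_{A_{q_{m+1}}}^{1/\delta_{m+1}},$$
and multiply them to isolate the $A_{\vec p,\vec r}$ factor:
$$\Big(\dashint_Q w^{\frac{\delta_{m+1}}{p}}\Big)^{1/\delta_{m+1}}\prod_{i=1}^m \Big(\dashint_Q \sigma_i\Big)^{1/\delta_i} \le \frac{[w^{\frac{\delta_{m+1}}{p}}]_{A_{q_{m+1}}}^{1/\delta_{m+1}}\prod_i[u_i]_{A_{q_i}}^{1/\theta_i}}{\prod_i(\dashint_Q u_i)^{1/\theta_i}\cdot(\dashint_Q w^{-1/(pS)})^{S}}.$$

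The only non-routine step, and hence the main obstacle, is to verify that the denominator in this inequality is always at least $1$; the rest of the argument is bookkeeping. I would handle this via Jensen's inequality, exploiting the pointwise identity
$$\prod_{i=1}^m u_i^{1/\theta_i}\cdot \bigl(w^{-1/(pS)}\bigr)^{S} = \Big(\prod_i w_i^{1/p_i}\Big)\cdot w^{-1/p} \equiv 1,$$
which follows from $\prod_i w_i^{1/p_i} = w^{1/p}$. Since $\log \dashint_Q f \ge \dashint_Q \log f$ for every positive measurable $f$, summing over the relevant factors gives
$$\log\bigg[\prod_i \Big(\dashint_Q u_i\Big)^{1/\theta_i}\Big(\dashint_Q w^{-1/(pS)}\Big)^{S}\bigg] \ge \dashint_Q \log \Big(\prod_i u_i^{1/\theta_i}\cdot w^{-1/p}\Big)\,dx = 0,$$
which is exactly the required lower bound. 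Taking the supremum in $Q$ then yields the estimate on $[\vec w]_{A_{\vec p,\vec r}}$ claimed in (ii), again with the obvious adaptations in the $p_i = r_i$ case.
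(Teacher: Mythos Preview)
Your proposal is correct and follows essentially the same approach as the paper. Part (i) is identical (H\"older with the exponents $\delta_iS$), and in part (ii) both arguments reduce to the same key lower bound $\prod_i\bigl(\dashint_Q u_i\bigr)^{1/\theta_i}\bigl(\dashint_Q w^{-1/(pS)}\bigr)^{S}\ge 1$ coming from the pointwise identity $\prod_i u_i^{1/\theta_i}\cdot w^{-1/p}\equiv 1$; the paper obtains it by writing $1=\dashint_Q 1$ and applying H\"older, whereas you obtain it by applying Jensen to each factor and summing, which is a cosmetic difference.
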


\begin{proof}[Proof of Proposition \ref{prop:comm-Banach}]
The proof is a modification of \cite[Proof of Theorem 4.13]{BMMST} and we only point out the main changes. As there we introduce
\[
\|h\|_{ \BMO} := \sup_Q \|h - h_Q\|_{\exp L, Q}
=
\sup_Q 
\inf \left\{ \lambda > 0 : \dashint_Q \bigg(e^{\frac{|h(x)-h_Q|}{\lambda}}-1\bigg) dx \leq 1 \right\}.
\]
Note that by the John-Nirenberg inequality $\|h\|_{{\rm BMO}}\le \|h\|_{\BMO}\le C_n \|h\|_{{\rm BMO}}$. Without loss of generality, we assume that $b_i$, $1\le i\le m$, are real valued and normalized so that $\|b_i\|_{\BMO}=1$. By following the argument in \cite[Proof of Theorem 4.3]{BMMST} with the Cauchy integral trick, given $\vec{v}\in A_{\vec{s},\vec{r}}$ we write $v:=\prod_{i=1}^m v_i^{\frac{s}{s_i}}$ and  one can see that everything reduces to showing that for some appropriate $\gamma_1,\dots, \gamma_m>0$ (to be chosen later) and for $|z_1|=\gamma_1,\dots, |z_m|=\gamma_m$, we have that $\vec{w}\in A_{\vec{s},\vec{r}}$ where 
\[
\vec{w}:=
(w_1,\dots,w_m)
:=
(v_1 e_{b_1}, \dots, v_m e_{b_m})
:=
(v_1 e^{-\text{Re}(z_1)s_1 b_1} , \dots, v_m e^{-\text{Re}(z_m)s_m b_m}).
\]
By Lemma \ref{lemma:main:II}$(i)$, applied to $\vec{v}\in A_{\vec{s},\vec{r}}$, it follows that $v^{\frac{\delta_{m+1}}{s}}\in A_{\frac{1-r}{r}\delta_{m+1}}$ and $v_i^{\frac{\theta_i }{s_i}}\in A_{\frac{1-r}{r}\theta_i }$ or, equivalently, $v_i^{-\frac{\delta_i}{s_i}}\in A_{\frac{1-r}{r}\delta_i}$ for $1\le i\le m$.
Using \cite[Lemma 3.28]{Perez-course} for any of these weights, there exists $\eta>1$ with
\begin{equation}\label{eq:afrfr}
\eta'\sim \max
\Big\{
\Big[v^{\frac{\delta_{m+1}}{s}}\Big]_{A_{\frac{1-r}{r}\delta_{m+1}}},
\Big[v_i^{-\frac{\delta_1}{s_1}}\Big]_{ A_{\frac{1-r}{r}\delta_1 }},\dots,\Big[v_m^{-\frac {\delta_m }{s_m}}\Big]_{ A_{\frac{1-r}{r}\delta_m }}\Big\}
\le
[\vec v]_{A_{\vec s, \vec r}}^{\max\{\delta_1,\dots,\delta_{m+1}\}}
\end{equation}
so that the following reverse H\"older inequalities hold:
\begin{equation}\label{rh1}
\left( \dashint_Q v^{\frac{\delta_{m+1}}{s}\eta} dx \right)^{\frac1\eta} \leq 2 \dashint_Q v^{\frac{\delta_{m+1}}{s}} dx
\end{equation}
and, for $i=1,\dots, m$,
\begin{equation}\label{rh2}
\left( \dashint_Q v_i^{-\frac{\delta_i}{s_i}\eta}\,dx \right)^{\frac1\eta} \leq 2 \dashint_Qv_i^{-\frac{\delta_i}{s_i}}\,dx.
\end{equation}
Writing $w:=\prod_{i=1}^m w_i^{\frac{s}{s_i}}$, using the previous estimates and H\"older's inequality with $\sum_{i=1}^m \frac{s}{s_i}=1$ we get
\begin{align*}
&
\Big(\dashint_Q w^{\frac {\delta_{m+1}}{s}}\d x\Big)^{\frac 1{\delta_{m+1}}}
\prod_{i=1}^m \Big(\dashint_Q w_i^{-\frac{\delta_i}{s_i}}\d x\Big)^{\frac1{\delta_i}}
\\
&\qquad\qquad=
\left( \dashint_Q v^{\frac {\delta_{m+1}}{s}} \prod_{i=1}^m e_{b_i}^{ \frac {\delta_{m+1}}{s_i}} dx \right)^{\frac 1{\delta_{m+1}}}
\prod_{i=1}^m \Big(\dashint_Q v_i^{-\frac{\delta_i}{s_i}} e_{b_i}^{-\frac{\delta_i}{s_i}} \d x\Big)^{\frac1{\delta_i}}
\\
&\qquad\qquad\le
\left( \dashint_Q v^{\frac {\delta_{m+1}}{s}\eta} dx \right)^{\frac 1{\delta_{m+1}\eta}}
\prod_{i=1}^m \Big(\dashint_Q v_i^{-\frac{\delta_i}{s_i}\eta} \d x\Big)^{\frac1{\delta_i\eta}}
\\
&\qquad\qquad\quad\qquad
\left( \dashint_Q \prod_{i=1}^m e_{b_i}^{ \frac {\delta_{m+1}}{s_i}\eta'} dx \right)^{\frac 1{\delta_{m+1}\eta'}}
\prod_{i=1}^m \Big(\dashint_Q e_{b_i}^{-\frac{\delta_i}{s_i}\eta'} \d x\Big)^{\frac1{\delta_i\eta'}}
\\
&
\qquad\qquad\le
2^{\frac{1-r}{r}}[\vec{v}]_{A_{\vec{s},\vec{r}}}
\prod_{i=1}^m 
\left( \dashint_Q e_{b_i}^{ \frac {\delta_{m+1}}{s}\eta'} dx \right)^{\frac s{\delta_{m+1}\eta's_i}}
\Big(\dashint_Q e_{b_i}^{-\frac{\delta_i}{s_i}\eta'} \d x\Big)^{\frac1{\delta_i\eta'}}
\\
&
\qquad\qquad\le
2^{\frac{1-r}{r}}[\vec{v}]_{A_{\vec{s},\vec{r}}}
\prod_{i=1}^m \Big[e_{b_i}^{ \frac {\delta_{m+1}}{s}\eta'}\Big]_{A_{1+\frac{\delta_{m+1}s_i}{\delta_i s}}}^{\frac s{\delta_{m+1}\eta's_i}}
\\
&
\qquad\qquad\le
2^{\frac{1-r}{r}+2\sum_{i=1}^m \gamma_i}[\vec{v}]_{A_{\vec{s},\vec{r}}},
\end{align*}
where the last estimate holds provided 
\[
\gamma_i\le \frac1{\eta'}\min \Big\{\frac1{\delta_i},\frac{s}{\delta_{m+1}s_i}\Big\},
\]
and where we have used that for every $1\le q<\infty$, $\lambda\in\re$ and $h\in {\rm BMO}$ we have that 
$$
\big[e^{\lambda\,h}\big]_{A_q} \leq 4^{|\lambda|\,\|h\|_{\BMO}},
\qquad|\lambda| \leq \frac{\min\left\{1, q-1 \right\}}{\|h\|_{\BMO}},
$$
see \cite[Lemma 3.5]{BMMST}. We have eventually shown that $\vec{w}\in A_{\vec{s},\vec{r}}$. From here the argument in \cite[Proof of Theorem 4.13]{BMMST}
goes through and we can conclude the desired estimate, further details are left to the interested reader.
\end{proof}

\begin{proof}[Proof of Lemma \ref{lemma:main:II}]
We start with $(i)$. Note that in Lemma \ref{lemma:main}$(i.1)$ we have already shown that  $w_i^{\frac{\theta_i }{p_i}}\in A_{\frac{1-r}{r}\theta_i }$ with $\Big[w_i^{\frac {\theta_i }{p_i}}\Big]_{ A_{\frac{1-r}{r}\theta_i }} \le [\vec w]_{A_{\vec p, \vec r}}^{\theta_i }$, for every $1\le i\le m-1$. Notice however that the proof works in the very same way for the case $i=m$. Thus, it remains to show that $w^{\frac{\delta_{m+1}}{p}}\in A_{\frac{1-r}{r}\delta_{m+1}}$. To proceed let us introcuce 
$\mathcal I:=\{1\le i \le m: \delta_i^{-1}\neq 0\}$ and $\mathcal{I}':=\{1,\dots,m\}\setminus\mathcal{I}$. If $\mathcal I=\emptyset$, then $\frac {1-r}r= \frac 1{\delta_{m+1}}$ and trivially 
\begin{multline*}
\Big[w^{\frac{\delta_{m+1}}{p}}\Big]_{A_{\frac{1-r}{r}\delta_{m+1}}}
=
\Big[w^{\frac{\delta_{m+1}}{p}}\Big]_{A_1}
=
\sup_Q\Big(\dashint_Q w^{\frac{\delta_{m+1}}{p}}\d x\Big)
\mathop{\rm{ess\, sup}}_{x\in Q} w(x)^{-\frac{\delta_{m+1}}{p}}\\
\le \sup_Q\Big(\dashint_Q w^{\frac{\delta_{m+1}}{p}}\d x\Big)
\prod_{i=1}^m\mathop{\rm{ess\, sup}}_{x\in Q} w_i(x)^{-\frac{\delta_{m+1}}{p_i}}
\le
[\vec{w}]_{A_{\vec{p},\vec{r}}}^{\delta_{m+1}}.
\end{multline*}
Consider next the case $\mathcal I\neq\emptyset$. For $i\in \mathcal I$ let us set 
\[
\frac1{\eta_i}
:=
\frac1{\delta_i}
\Big(\sum_{j=1}^{m} \frac1{\delta_j}\Big)^{-1}
=
\frac1{\delta_i}
\Big(
\sum_{j=1}^{m+1} \frac1{\delta_j}-\frac1{\delta_{m+1}}\Big)^{-1}
=
\frac1{\delta_i}
\Big(\frac{1-r}{r}-\frac1{\delta_{m+1}}\Big)^{-1}
\]
and note that $\sum_{i\in\mathcal{I}}\frac1{\eta_i}=1$. Then Hölder's inequality easily leads to the desired estimate:
\begin{align*}
&\Big[w^{\frac{\delta_{m+1}}{p}}\Big]_{A_{\frac{1-r}{r}\delta_{m+1}}}
=
\sup_Q\Big(\dashint_Q w^{\frac{\delta_{m+1}}{p}}\d x\Big)
\Big(\dashint_Q w^{\frac{\delta_{m+1}}{p}(1-(\frac{1-r}{r}\delta_{m+1})')}\d x\Big)^{\frac{1-r}{r}\delta_{m+1}-1}
\\
&\quad\le
\sup_Q\Big(\dashint_Q w^{\frac{\delta_{m+1}}{p}}\d x\Big)
\Big(\dashint_Q \prod_{i\in \mathcal I} w_i^{-\frac{\delta_{i}}{p_i\eta_i}}\d x\Big)^{\frac{1-r}{r}\delta_{m+1}-1}\prod_{i\in \mathcal I'}\mathop{\rm{ess\, sup}}_{x\in Q} w_i(x)^{-\frac{\delta_{m+1}}{p_i}}
\\
&\quad\le
\sup_Q\Big(\dashint_Q w^{\frac{\delta_{m+1}}{p}}\d x\Big)
\Big(\prod_{i\in \mathcal I} \Big(\dashint_Q w_i^{-\frac{\delta_i}{p_i}} \d x
\Big)^{\frac{\delta_{m+1}}{\delta_i}}\Big)\prod_{i\in \mathcal I'}\mathop{\rm{ess\, sup}}_{x\in Q} w_i(x)^{-\frac{\delta_{m+1}}{p_i}}
\\
&\quad\le
[\vec{w}]_{A_{\vec{p},\vec{r}}}^{\delta_{m+1}},
\end{align*}
and this completes the proof of $(i)$.

Let us now obtain $(ii)$. Assume first that $\mathcal{I}=\emptyset$, thus for every $1\le i\le m$ we have $\delta_i^{-1}=0$ and 
$\theta_i=\frac r{1-r}=\delta_{m+1} $. Then Hölder's inequality gives
\begin{multline*}
\essinf_Q w^{\frac1p}
\le
\Big(\dashint_{Q} w^{\frac{\delta_{m+1}}{m p}}dx\Big)^{\frac1{m\delta_{m+1}}}
=
\Big(\dashint_{Q} \prod_{i=1}^{m} w_i^{\frac{\theta_i}{p_i}\frac1{m}}dx\Big)^{\frac1{m{\delta_{m+1}}}}
\\
\le
\prod_{i=1}^{m} \Big(\dashint_{Q} w_i^{\frac{\theta_i}{p_i}}dx\Big)^{\frac1{{\delta_{m+1}} }}
\le
\prod_{i=1}^{m} \Big[w_i^{\frac {\theta_i }{p_i}}\Big]_{ A_1}^{\frac1{\theta_i }}
\essinf_{Q} w_i^{\frac1{p_i}},
\end{multline*}
and thus
\begin{align*}
[\vec{w}]_{A_{\vec{p},\vec{r}}}
&= 
\sup_Q\Big(\dashint_Q w^{\frac{\delta_{m+1}}{p}}\d x\Big)^{\frac 1{\delta_{m+1}}}
\prod_{i=1}^m\mathop{\rm{ess\, sup}}_{x\in Q} w_i(x)^{-\frac{1}{p_i}}
\\
&\le
\Big[w^{\frac{\delta_{m+1}}{p}}\Big]_{A_1}^{\frac 1{\delta_{m+1}}}
\big(\essinf_Q w^{\frac1p}\big)
\prod_{i=1}^m\mathop{\rm{ess\, sup}}_{x\in Q} w_i(x)^{-\frac{1}{p_i}}
\\
&\le 
\Big[w^{\frac{\delta_{m+1}}{p}}\Big]_{A_1}^{\frac 1{\delta_{m+1}}}
\prod_{i=1}^{m} \Big[w_i^{\frac {\theta_i }{p_i}}\Big]_{ A_1}^{\frac1{\theta_i }}.
\end{align*}

Next we consider the case when $\mathcal I \neq\emptyset$. Set 
\[
\frac1{\theta_{m+1}}
:= 
\frac{1-r}r-\frac1{\delta_{m+1}} 
=
\sum_{j=1}^{m+1} \frac1{\delta_j}-\frac1{\delta_{m+1}}
>0,
\]
Since, $\sum_{i=1}^{m+1}\frac1{\theta_i}=\frac{m(1-r)}{r}$, Hölder's inequality  easily gives
\begin{align*}
1&=
\Big(\dashint_Q w^{-\frac{1}{p}\frac{r}{m(1-r)}} w^{\frac{1}{p}\frac{r}{m(1-r)}} dx\Big)^{\frac{m(1-r)}{r}}
\\
&=
\Big(\dashint_Q w^{-\frac{1}{p}\frac{r}{m(1-r)}} \prod_{i=1}^m w_i^{\frac{1}{p_i}\frac{r}{m(1-r)}} dx\Big)^{\frac{m(1-r)}{r}}
\\
&\le
\Big(\dashint_Q w^{-\frac{\theta_{m+1}}{p}}dx\Big)^{\frac1{\theta_{m+1}}} \prod_{i=1}^m \Big(\dashint_Q w_i^{\frac{\theta_i}{p_i}} dx\Big)^{\frac1{\theta_i}}
\\
&=\Big(\dashint_Q w^{\frac{\delta_{m+1}}{p}(1-(\frac{1-r}{r}\delta_{m+1})')}\d x\Big)^{\frac1{\delta_{m+1}}(\frac{1-r}{r}\delta_{m+1}-1)}\prod_{i=1}^m \Big(\dashint_Q w_i^{\frac{\theta_i}{p_i}} dx\Big)^{\frac1{\theta_i}}.
\end{align*}
This and our assumptions give the desired estimate
\begin{align*}
[\vec{w}]_{A_{\vec{p},\vec{r}}}&=
\sup_Q
\Big( \dashint_Q w^{\frac{\delta_{m+1}}{p}}  \d x \Big)^{\frac 1{\delta_{m+1}}}
\Big(\prod_{i\in \mathcal I} \Big(\dashint_Q w_i^{-\frac{\delta_i}{p_i}} \d x
\Big)^{\frac{1}{\delta_i}}\Big)\prod_{i\in \mathcal I'}\mathop{\rm{ess\, sup}}_{x\in Q} w_i(x)^{-\frac{1}{p_i}}
\\
&\le
\Big[w^{\frac{\delta_{m+1}}{p}}\Big]_{A_{\frac{1-r}{r}\delta_{m+1}}}^{\frac1{\delta_{m+1}}}\prod_{i=1}^{m}  \Big[w_i^{\frac {\theta_i}{ p_i}}\Big]_{A_{\frac{1-r}{r}\theta_i}}^{\frac1{\theta_i}}  \\
&
\qquad\times
\sup_Q
\Big(\dashint_Q w^{\frac{\delta_{m+1}}{p}(1-(\frac{1-r}{r}\delta_{m+1})')}\d x\Big)^{-\frac1{\delta_{m+1}}(\frac{1-r}{r}\delta_{m+1}-1)}\prod_{i=1}^{m}  
 \Big(\dashint_Q w_i^{\frac{\theta_i}{p_i}} dx\Big)^{-\frac1{\theta_i}}
\\
&
\le
\Big[w^{\frac{\delta_{m+1}}{p}}\Big]_{A_{\frac{1-r}{r}\delta_{m+1}}}^{\frac1{\delta_{m+1}}}
\prod_{i=1}^{m}  \Big[w_i^{\frac {\theta_i}{ p_i}}\Big]_{A_{\frac{1-r}{r}\theta_i}}^{\frac1{\theta_i}}.
\end{align*}
This completes the proof.
\end{proof}


\begin{thebibliography}{88I}

\parskip=0.1cm


\bibitem{AM}
P.~Auscher and J.~M. Martell.
\newblock Weighted norm inequalities, off-diagonal estimates and elliptic
operators. {P}art {I}. {G}eneral operator theory and weights.
\newblock {\em Adv. Math.}, 212(1):225--276, 2007.


\bibitem{barron}
A.~Barron.
Weighted estimates for rough bilinear singular integrals via sparse domination. 
{\em New York J. Math.}, 23:779--811, 2017.
	
\bibitem{BM1}
C.~Benea and C.~Muscalu.
\newblock Multiple vector-valued inequalities via the helicoidal method.
\newblock {\em Anal. PDE}, 9(8):1931--1988, 2016.

\bibitem{BM2}
C.~Benea and C.~Muscalu.
\newblock {Quasi-Banach Valued Inequalities via the Helicoidal method}.
\newblock {\em  J. Funct. Anal.}, 273(4):1295--1353, 2017. 
	
\bibitem{BM3}
C.~Benea and C.~Muscalu.
Sparse domination via the helicoidal method.	
{arXiv:1707.05484}.

\bibitem{BM4}
C.~Benea and C.~Muscalu.
The Helicoidal Method.
arXiv:1801.10071.

\bibitem{BMMST}
Á.~Bényi, J.M.~Martell, K.~Moen, E.~Stachura, and R.H.~Torres.
Boundedness results for commutators with BMO functions via weighted estimates: a comprehensive approach.
arXiv:1710.08515.

\bibitem{BPV}
F.~Bombal, D.~P\'erez-Garc\'ia and I.~Villanueva.
 Multilinear extensions of Grothendieck's theorem. 
 {\em Quart. J. Math.}, 55(4):441--450, 2004.

\bibitem{Carando:2016vm}
D.~Carando, M.~Mazzitelli, and S.~Ombrosi.
\newblock {Multilinear Marcinkiewicz-Zygmund inequalities}.
{\em J. Fourier Anal. Appl.} (2017), \url{http://doi.org/10.1007/s00041-017-9563-5}.





\bibitem{conde-rey} J.M.~Conde-Alonso and G.~Rey.
A pointwise estimate for positive dyadic shifts and some applications. 
{\em Math. Ann.}, 365(3-4):1111--1135, 2016. 


	
\bibitem{CM-extrapol}
D.~Cruz-Uribe and J.M.~Martell.
\newblock {Limited range multilinear extrapolation with applications to the bilinear Hilbert transform.}
To appear in {\em Math. Ann.},
\url{http://doi.org/10.1007/s00208-018-1640-9}.

\bibitem{CMP}
D.~Cruz-Uribe, J.M.~Martell, and C.~P\'erez. 
Weights, extrapolation and the
theory of Rubio
de Francia, volume 215 of Operator Theory: Advances and Applications.
Birkh\"auser/Springer
Basel AG, Basel, 2011.

\bibitem{CMP-AIM}
D.~Cruz-Uribe, J.M.~Martell, and C.~P\'erez. 
Sharp weighted estimates for classical operators. 
{\em Adv. Math.}, 229(1):408--441, 2012. 


\bibitem{CPO2016}
A.~Culiuc, F.~Di~Plinio, and Y.~Ou.
\newblock {Domination of multilinear singular integrals by positive sparse forms.}
\newblock {arXiv:1603.05317.}

\bibitem{DLP}
W.~Damián, A.K.~Lerner, and C.~Pérez.
Sharp weighted bounds for multilinear maximal functions and Calderón-Zygmund operators.
{\em J. Fourier Anal. Appl.}, 21(1):161--181, 2015. 


\bibitem{duoandikoetxea01}
J.~Duoandikoetxea.
\newblock {\em Fourier analysis}, volume~29 of {\em Graduate Studies in
	Mathematics}.
\newblock American Mathematical Society, Providence, RI, 2001.

\bibitem{Duo}
J.~Duoandikoetxea.
Extrapolation of weights revisited: New proofs and sharp bounds. 
{\em  J. Funct. Anal.}, 260(6):1886--1901, 2011.

\bibitem{grafakos08b}
L.~Grafakos.
\newblock {\em Modern Fourier Analysis}, volume 250 of {\em Graduate Texts in
	Mathematics}.
\newblock Springer, New York, 2nd edition, 2008.

\bibitem{GL}
L.~Grafakos and X.~Li.
Uniform bounds for the bilinear Hilbert transforms. I. 
{\em Ann. of Math. (2)}, 159(3):889--933, 2004.


\bibitem{grafakos-martell04}
L.~Grafakos and J.~M. Martell.
\newblock Extrapolation of weighted norm inequalities for multivariable
operators and applications.
\newblock {\em J. Geom. Anal.}, 14(1):19--46, 2004.

\bibitem{GT}
L.~Grafakos and R.H.~Torres.
\newblock {Multilinear Calder\'{o}n-Zygmund theory.}
\newblock { \em Adv. Math.}, 165(1): 124--164, 2002.

\bibitem{HMS}
E.~Harboure, R.~A. Mac{\'\i}as, and C.~Segovia.
\newblock {Extrapolation results for classes of weights}.
\newblock{\em Amer. J. Math.}, 110(3):383--397, 1988.

\bibitem{HL}
T.~Hyt\"onen and M.~Lacey.
\newblock Pointwise convergence of vector-valued {F}ourier series.
\newblock {\em Math. Ann.}, 357(4):1329--1361, 2013.


\bibitem{LT1}
M.~Lacey and C.~Thiele.
\newblock {$L^p$} estimates on the bilinear {H}ilbert transform for
{$2<p<\infty$}.
\newblock {\em Ann. of Math. (2)}, 146(3):693--724, 1997.

\bibitem{LT2}
M.~Lacey and C.~Thiele.
\newblock On {C}alder\'on's conjecture.
\newblock {\em Ann. of Math. (2)}, 149(2):475--496, 1999.

\bibitem{lerner-nazarov}
A.K.~Lerner and F.~Nazarov.
Intuitive dyadic calculus: The basics.
To appear in {\em Expo. Math.},
\url{http://doi.org/10.1016/j.exmath.2018.01.001}.

\bibitem{LOPTT}
A.K.~Lerner, S.~Ombrosi, C.~P\'erez, R.H.~Torres, and R.~Trujillo-Gonz\'alez.
New maximal functions and multiple weights for the multilinear Calderón-Zygmund
theory. 
{\em Adv. Math.}, 220(4):1222--1264, 2009.


\bibitem{LMO-end}
K.~Li, J.M.~Martell, and S.~Ombrosi.
End-point estimates and extrapolation for multilinear Muckenhoupt classes. {\em In preparation}.


\bibitem{LMS}
K.~Li, K.~Moen, and W.~Sun.
The sharp weighted bound for multilinear maximal functions and Calderón-Zygmund operators.
{\em J. Fourier Anal. Appl.}, 20(4):751--765, 2014.


\bibitem{Perez-course}
C.~P\'erez. 
Singular integrals and weights. Harmonic and geometric analysis, 91--143, 
Adv. Courses Math. CRM Barcelona, Birkhäuser/Springer Basel AG, Basel, 2015. 


\bibitem{RdF}
J.L.~Rubio de Francia. 
Factorization theory and {$A\sb{p}$} weights.
{\em Amer. J. Math.}, 106(3):533--547, 1984.

\bibitem{Sil}
P.~Silva.
\newblock Vector-valued inequalities for families of bilinear {H}ilbert
transforms and applications to bi-parameter problems.
\newblock {\em J. Lond. Math. Soc. (2)}, 90(3):695--724, 2014.


\end{thebibliography}
\end{document}